\documentclass[12pt]{amsart}

\usepackage{amsmath}
\usepackage{amssymb}
\usepackage{amsthm}
\usepackage[utf8]{inputenc}
\usepackage{xcolor}
\usepackage{graphicx}
\usepackage{subcaption}
\usepackage{float}
\usepackage{nicefrac}
\usepackage{tikz}
\usetikzlibrary{positioning}
\usepackage{mathtools}
\usepackage{hyperref}


\theoremstyle{definition}
\newtheorem{Def}{Definition}[section]
\newtheorem{Hyp}{Hypothesis}

\newtheorem{Exe}{Example}

\newtheorem{Const}{Construction}
\newtheorem{Rem}{Remark}[section]

\theoremstyle{theorem}
\newtheorem{The}{Theorem}[section]
\newtheorem{Pro}{Proposition}[section]
\newtheorem{Cor}{Corollary}[section]
\newtheorem{Lem}{Lemma}[section]

\newcommand{\R}{\mathbb{R}}

\newcommand{\N}{\mathbb{N}}

\newcommand{\Sph}{\mathbb{S}}
\newcommand{\lcs}{\frak{lcs}}

\title[On the projection of exact Lagrangians in LCS geometry]{On the projection of exact Lagrangians in locally conformally symplectic geometry}

\author{Adrien Currier}
\address{Nantes Université, Laboratoire de Mathématiques Jean Leray, LMJL,
	UMR 6629, F-44000 Nantes, France}
\email{arien.currier@univ-nantes.fr}
\date{}

\begin{document}
\vspace*{-2cm}\maketitle
\begin{abstract}
	In this paper, we construct examples of exact Lagrangians in cotangent bundles of closed manifolds with locally conformally symplectic ($\lcs$) structures and give conditions under which the projection induces a simple homotopy equivalence between an exact Lagrangian and the $0$-section of the cotangent bundle. This line of questioning leads us to investigate the links between the contact geometry of jet spaces and the $\lcs$ geometry of cotangent bundles. Among other things, we will study essential Liouville chords, which seem to be the $\lcs$ equivalent to Reeb chords. We will also see how Legendrians in jet spaces are an obstruction to the straightforward adaptation of the Abouzaid-Kragh (\cite{AbouzaidKragh2018SHENL}) theorem to $\lcs$ geometry.
\end{abstract}

\tableofcontents
\section{Introduction}

First considered by H.-C. Lee as early as 1943 (\cite{Lee1943AKO}), locally conformally symplectic ($\lcs$) geometry is a generalization of symplectic geometry in which transition maps are taken to be symplectomorphisms of $\mathbb{R}^{2n}$ (for some $n$) up to a positive constant. Named by I. Vaisman in 1976 in \cite{Vaisman1976OnLC}, this generalization still allows for Hamiltonian dynamics (see \cite{Vaisman1985IJMMS} for more details). Moreover, the positive constant allows for $\lcs$ geometry to be less rigid than symplectic geometry while still keeping the same local properties. This comes as both an advantage and a drawback. Indeed, many more manifolds can have a $\lcs$ structure as opposed to a symplectic one (see \cite{eliashberg2020making} or \cite{Bertelson2021} for a general statement, or see \cite{Angella2017StructureOL} for specific examples): in short, any closed almost symplectic manifold with non-zero first Betti number admits a $\lcs$ structure. For example, while none of the various $\Sph^1\times\Sph^{2n-1}$ have a symplectic structure for $n>1$, they all have an $\lcs$ structure, and an exact one at that. However, rigidity results are harder to come by. For example, in symplectic geometry, the number of intersection points of a generic Hamiltonian isotopy of the $0$-section of a cotangent bundle over a closed manifold is bounded by the sum of the Betti numbers of the $0$-section, and these intersection points are given as the critical points of some (generating) Morse function. Note that by classical Morse theory, the number of critical points each index of such a function can be bounded with the Betti numbers of the $0$-section. However, the $\lcs$ adaptation of such a theorem calls upon a notion of $\beta$-critical points and Chantraine and Murphy, in \cite{Chantraine2016ConformalSG}, only gave a lower bound for the total number of those $\beta$-critical points, as opposed to a bound for each index. This same paper also explains the hurdles of trying to adapt Floer theory to this new setting (in short, any naive adaptation will run afoul of Gromov's compactness). However, do note that some progress has been made in that regard in specific cases (see \cite{Oh2023PCLCM}, for example).

In this paper, we will explore some of the limits of rigidity in this setting especially as it pertains to ``exact'' Lagrangians in cotangent bundles of closed manifolds with ``exact'' $\lcs$ structure. An exact $\lcs$ manifold can be understood as the data of a manifold $M$, together with a $1$-form $\lambda$ whose derivative is locally conformal to a symplectic form, and some gluing data which takes the form of a closed $1$-form $\beta$. The case $\beta=0$ corresponds to the standard symplectic case. In this generalization, one can define $\beta$-exact Lagrangians (called exact Lagrangians when $\beta$ does not matter), which generalize $0$-exact Lagrangians (the exact Lagrangians of symplectic geometry).
In the rest of this paper, we will show three main results.

\begin{The}\label{thm1} There are two connected closed manifold $M$ and $L$ of dimension $n\neq 3$, a $\beta\in\Omega^1(M)$ closed and an embedding \[i:L\rightarrow (T^*M,\lambda,\beta)\] such that $i(L)$ is a $\beta$-exact Lagrangian of $(T^*M,\lambda,\beta)$ for $\lambda$ the canonical Liouville form on $T^*M$ and
	\begin{enumerate}
	\item $\chi(L)\neq\chi(M)$, where $\chi$ is the Euler characteristic,
	\item or $L$ has no associated generating function.
	\end{enumerate}
(by slight abuse of notation, we call $\beta$ the pullback of $\beta$ to $T^*(M)$.)
\end{The}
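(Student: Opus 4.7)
My approach is to construct explicit examples, using Legendrian submanifolds of jet spaces as the building blocks, in line with the paper's theme that Legendrians are the natural obstruction to transferring Abouzaid--Kragh to the $\lcs$ setting. I would set $M = \Sph^1 \times N$ for a suitable closed $(n-1)$-manifold $N$ and take $\beta = d\theta$, the standard closed $1$-form on the $\Sph^1$-factor. Passing to the universal cover $\widetilde{M} = \R \times N$ trivialises $\beta$ as $ds$, and the conformal rescaling $e^{-s}\lambda$ becomes a genuine Liouville form on $T^*\widetilde{M}$, while the deck transformation $s \mapsto s+1$ rescales it by $e^{-1}$. Under this correspondence, $\beta$-exact Lagrangians in $T^*M$ lift to $\Z$-equivariant exact Lagrangians in $T^*\widetilde{M}$ whose primitives rescale by $e^{-1}$.

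The key input is a Legendrian $\Lambda \subset J^1(N)$ of prescribed diffeomorphism type without Reeb chords, so that its Lagrangian projection to $T^*N$ is embedded; after the suspension, this translates to the image Lagrangian in $T^*M$ having no essential Liouville chords in the sense of the paper. Legendrians with arbitrary topology (and no Reeb chords, possibly after loosening) are produced by Murphy's $h$-principle for loose Legendrians, which applies once $\dim\Lambda = n-1 \geq 2$; I expect the exclusion $n \neq 3$ (i.e.\ $\dim\Lambda = 2$) to reflect the borderline nature of the available Legendrian $h$-principle in that dimension, with the low cases $n \leq 2$ handled by direct constructions. Given $\Lambda$, identify the $z$-coordinate of $J^1(N) = T^*N \times \R_z$ with the $s$-coordinate of $\widetilde{M}$: this yields a $\Z$-translate-invariant submanifold of $T^*\widetilde{M}$. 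The Legendrian condition $dz = \lambda_N|_\Lambda$ produces a primitive for $\lambda$ along this image, and the $e^{-s}$ factor in the rescaled Liouville form turns it into one with the equivariance needed to descend to a $\beta$-exact Lagrangian $i(L) \subset T^*M$ with $L$ diffeomorphic to $\Lambda$.

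For conclusion (1), choose $\Lambda$ with $\chi(\Lambda) \neq 0$, e.g.\ $\Lambda \cong \Sph^{n-1}$ when $n-1$ is even; then $\chi(L) \neq 0 = \chi(\Sph^1)\chi(N) = \chi(M)$. For conclusion (2), the Legendrian produced by the loose $h$-principle can be arranged so that the associated map $L \to M$ fails the topological and homotopical constraints forced by a generating function (which would encode $L$ as a selector of fibrewise critical points of a function on $M \times \R^k$); in particular, one can make the induced map on fundamental groups incompatible with any such description.

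The main obstacle I anticipate is verifying embeddedness of the image in $T^*M$ after descending from the cover, which amounts to controlling the essential Liouville chords of $\Lambda$ and ensuring they do not cause self-intersections in the quotient; in the $h$-principle construction, such chords are the counterpart of Reeb chords and must be eliminated or pushed outside the fundamental domain of the deck action. A secondary difficulty is writing down the equivariant primitive in closed form, where the $e^{-s}$ rescaling is essential and careful bookkeeping is needed so that the resulting function is periodic (and hence descends) rather than only equivariant under scaling.
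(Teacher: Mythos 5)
Your overall framework (lift a Legendrian of $J^1(N)$ to a $d\theta$-exact Lagrangian in $T^*(\Sph^1\times N)$, viewed equivariantly on the cover where $e^{-s}\lambda$ is Liouville) is indeed the paper's starting point, but the argument for conclusion (1) breaks down at the decisive step. Whatever you suspend over the $\Sph^1$-factor, the resulting closed Lagrangian is either $\Lambda$ itself, which has dimension $n-1$ and so cannot be a closed Lagrangian in the $2n$-dimensional $T^*M$ (nor satisfy the theorem, which requires $\dim L=\dim M$), or it is the product $\Lambda\times\Sph^1$, in which case $\chi(L)=\chi(\Lambda)\cdot\chi(\Sph^1)=0=\chi(M)$ and conclusion (1) is unreachable. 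Taking $\Lambda\cong\Sph^{n-1}$ does not help: the $\Sph^1$-factor kills the Euler characteristic. The paper escapes this trap by \emph{not} using a product: it glues up an exact Lagrangian cobordism in the symplectization $J^1M\times\R$ that is cylindrical near its two ends over the same Legendrian (Construction \ref{ExeCob}), and feeds in the non-orientable exact Lagrangian cobordism of genus at least $2$ from the unknot to itself of \cite{Capovilla2016}; the glued-up surface in $T^*\T^2$ has $\chi\neq 0$ while $\chi(\T^2)=0$. (Incidentally, this is also the real reason for excluding $n=3$: the cross-with-a-Legendrian-sphere trick used to raise the dimension fails to preserve $\chi\neq 0$ exactly when the base has dimension $2$ --- it has nothing to do with the range of validity of Murphy's $h$-principle.)

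Two further points. For conclusion (2) your argument is not yet a proof: a generating function quadratic at infinity does not impose any evident constraint on $\pi_1(L)\to\pi_1(M)$ that a loose Legendrian would violate, and you do not name a concrete obstruction. The paper instead proves an equivalence --- the lift $\Lambda\times\Sph^1$ admits a generating function in the $\lcs$ sense if and only if $\Lambda$ admits one in the classical sense --- and then invokes the standard fact that stabilized Legendrians admit no generating function. Finally, your insistence on a chordless Legendrian is both unnecessary and counterproductive: embeddedness of the lift is automatic because the $T^*\Sph^1$ fiber coordinate records the primitive $f$ (i.e.\ the $z$-coordinate of $J^1(N)$) and therefore separates the double points of the Lagrangian projection; moreover requiring no Reeb chords would rule out precisely the stabilized Legendrians you need for conclusion (2).
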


The question on whether or not $0$-exact Lagrangians have generating functions is still open and research is ongoing (see \cite{Abouzaid2020TwistedGF}, for example).

 This proposition implies that the conclusion of Abouzaid-Kragh's theorem does not hold in this setting. Indeed, even the much weaker fact that, for $0$-exact Lagrangians, the projection has non-zero degree cannot be generalized. Moreover, it also implies that a direct adaptation (along the lines of Chantraine and Murphy's adaptation of the Laudenbach-Sikorav theorem) does not hold either.  More precisely, the reader familiar with Morse-Novikov homology (see \cite{Farber2004TopologyOC} for a presentation of this homology) will have noticed that the fact that $\chi(L)\neq\chi(M)$ implies that the Morse-Novikov homology of $L$ with respect to any $1$-form cannot be isomorphic to the Morse-Novikov homology of $M$ with respect to any $1$-form (see proposition 1.40 in \cite{Farber2004TopologyOC}).
 
  Moreover, this theorem shows the limits of Chantraine and Murphy's theorem as it relies on the fact that an exact Lagrangian has a generating function to give a lower bound to the number of (transverse) intersections with the $0$-section.

These considerations push us to investigate whether the homology of the base manifold gives any constraint on the homology of an exact Lagrangian.

\begin{The}\label{thm2}
	Let $M$ be a closed manifold, $\lambda$ be the canonical Liouville form of $T^*M$, and $\beta\in\Omega^1(M)$ be closed. We will call $\beta$ the various pullbacks of $\beta$. Let $L$ be a connected closed $\beta$-exact Lagrangian submanifold of $T^*M$. Then:
	\[[\beta]\neq 0\in H^1(M,\R)\implies [\beta]\neq 0\in H^1(L,\R)\]
\end{The}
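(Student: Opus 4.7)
The plan is to argue by contradiction. Assume $[\beta|_L]=0$, so $\beta|_L = dg$ for some smooth $g \in C^\infty(L)$. Combined with the $\beta$-exactness hypothesis $\lambda|_L = df + f\,\beta|_L$ for some $f \in C^\infty(L)$, this yields $\lambda|_L = df + f\,dg = e^{-g}\, d(f e^g)$; in particular, $h := f e^g$ is an ordinary primitive of the reweighted $1$-form $e^g \lambda|_L$ on $L$. The emergence of the conformal factor $e^g$ is the signal that we should pass to a cover of $M$ on which $\beta$ itself becomes exact.

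Let $q: M_\beta \to M$ be the (connected) cover corresponding to the kernel of the period homomorphism $\pi_1(M) \to \R$, $\gamma \mapsto \int_\gamma \beta$, and write $q^*\beta = dG$ for some $G: M_\beta \to \R$. The assumption $[\beta|_L]=0$ is precisely that the image of $p_*: \pi_1(L) \to \pi_1(M)$ lies in this kernel, where $p = \pi \circ i: L \to M$. Hence $p$ lifts to $\tilde p: L \to M_\beta$, and accordingly $i$ lifts to an embedding $\tilde i: L \to T^*M_\beta$ with compact image $\tilde L$. Because $[\beta]\neq 0$, the period homomorphism is nonzero, so the cover $q$ is nontrivial and $M_\beta$ is non-compact.

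Next, I would reduce to ordinary symplectic geometry via the gauge diffeomorphism $\psi: T^*M_\beta \to T^*M_\beta$, $\psi(q,p) = (q, e^{-G(q)}p)$. A direct computation gives $\psi^*\tilde\lambda = e^{-G}\tilde\lambda$, hence $\psi^*(e^G\tilde\lambda) = \tilde\lambda$; so $\psi$ is an isomorphism of Liouville manifolds from $(T^*M_\beta, \tilde\lambda)$ to $(T^*M_\beta, e^G\tilde\lambda)$. Since $(e^G\tilde\lambda)|_{\tilde L} = e^G\,\lambda|_L = d(fe^g)$ is exact, the preimage $L' := \psi^{-1}(\tilde L)$ is a \emph{compact exact Lagrangian} in the standard Liouville cotangent bundle $(T^*M_\beta, \tilde\lambda)$ over the non-compact base $M_\beta$.

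The main obstacle is then the final step: ruling out such an $L'$. My plan is a Hamiltonian displacement argument. Since $M_\beta$ is non-compact and connected, one can produce a compactly supported isotopy of $M_\beta$ whose time-one flow displaces the compact projection $\pi(L')$ from itself; lifting this to a suitably cut-off Hamiltonian on $T^*M_\beta$ disjoins $L'$ from itself, and so forces $HF^*(L',L') = 0$. This contradicts the standard computation $HF^*(L',L') \cong H^*(L';\R)\neq 0$ for a closed exact Lagrangian in a cotangent bundle, completing the proof. An alternative route for this last step would be to invoke a non-existence theorem for closed exact Lagrangians in cotangent bundles over open manifolds directly (via Liouville flow / Weinstein-type rigidity), but the Floer-displacement approach seems the most transparent.
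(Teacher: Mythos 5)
Your reduction runs parallel to the paper's for the first half: the paper also argues by contradiction, lifts $L$ to a cover on which $\beta$ becomes exact, and applies the fiberwise rescaling $(q,p)\mapsto(q,e^{-G(q)}p)$ to turn the lift into a $0$-exact Lagrangian (it first reduces to $[\beta]\in H^1(M,\mathbb{Z})$ via a decomposition lemma, a step your use of the period-kernel cover lets you skip). The decisive difference is what happens next. The paper does \emph{not} stay on the non-compact cover: it cuts the integral cover between two regular level sets of a primitive $b$ of $\beta$ chosen to sandwich the compact lift, and glues the two boundary hypersurfaces by a deck transformation to produce a \emph{finite} cover $\tilde M'$ of $M$, which is again closed. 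The rescaled Lagrangian then sits as a closed $0$-exact Lagrangian in $T^*\tilde M'$ with $\tilde M'$ closed, Abouzaid--Kragh applies directly, and the contradiction is that the resulting homotopy equivalence would force $[\beta]$ to vanish on $\tilde M'$, which it does not. You instead stay on the infinite cover $M_\beta$ and must rule out a closed exact Lagrangian over an open base.

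This is where your argument has a genuine gap. The claim that a compact subset of a connected non-compact manifold can always be displaced from itself by a compactly supported isotopy is false. Take $M$ a closed genus-$2$ surface and $[\beta]$ a primitive integral class; then $M_\beta$ is a surface of infinite genus, and a compact subsurface $K\subset M_\beta$ containing a handle carries classes $a,b\in H_1(M_\beta;\mathbb{Z})$ with intersection number $a\cdot b=1$. For any $\phi$ isotopic to the identity one has $\phi_*a=a$, hence $\phi(K)\cap K\neq\emptyset$, so such a $K$ is not displaceable; and a priori $\pi(L')$ could be such a set (assuming otherwise would be circular). The conclusion you need --- no closed exact Lagrangian in $T^*N$ for $N$ open --- is nevertheless true, but it requires genuinely symplectic input rather than displacement in the base: for instance the theorem of Lalonde--Sikorav on cotangent bundles of open manifolds, or the observation that $T^*N$ carries a subcritical Weinstein structure (a proper Morse function on the open manifold $N$ with all indices $\leq n-1$ induces one on $T^*N$ with all indices $<n$), whence every compact subset is Hamiltonianly displaceable by Biran--Cieliebak and $HF^*(L',L')=0$. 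With such a citation your ``alternative route'' closes the argument; as written, the primary route does not. The paper's cut-and-reglue trick is precisely the device that avoids ever confronting the open-base case.
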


Amongst other things, this implies that an exact Lagrangian cannot be a sphere as long as $M$ is not a sphere itself.

We will also investigate the second line of questioning brought by the first theorem: under which condition is an exact Lagrangian simply homotopically equivalent to the $0$-section? We give here a partial answer:

\begin{The}\label{thm3}
Let $M$ be a connected closed manifold of dimension $n$, $\lambda$ be the canonical Liouville form of $T^*M$ and $\beta\in\Omega^1(M)$ be closed. We will call $\beta$ the various pullbacks of $\beta$. Let $L$ be a connected closed manifold of dimension $n$, with an embedding \[i:L\rightarrow (T^*M,\lambda,\beta)\] such that $i^*\lambda=d_\beta f$ for some $f\in C^\infty(L)$. Assume that for each pair of points $(q,p),(q,tp)\in i(L)\cap T_q^*M$ (for some $t>0$), we have:
\[\frac{\ln(f(q,tp))-\ln(f(q,p))}{\ln(t)}<1.\]
Then the projection $\pi:T^*M\rightarrow M$ induces a simple homotopy equivalence between $i(L)$ and $M$
\end{The}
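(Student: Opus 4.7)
The plan is to show that, under the chord condition, $\pi \circ i: L \to M$ is a diffeomorphism; this immediately gives a simple homotopy equivalence. The main tool is the Liouville vector field $X$ on $T^*M$, whose flow $\phi_s^X(q,p) = (q, e^s p)$ rescales fibers and satisfies $\phi_s^{X*}\lambda = e^s \lambda$ and $\pi \circ \phi_s^X = \pi$. Consequently, the family $L_s := \phi_s^X(i(L))$ consists of $\beta$-exact Lagrangians with primitives $e^s f$ (under the natural identification $L \cong L_s$), all satisfying the same chord condition, and all inducing essentially the same projection $\pi|_{L_s} \cong \pi|_L$.

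\textbf{Step 1: local diffeomorphism.} Since $\dim L = \dim M = n$, it suffices to rule out critical points. Suppose for contradiction that $x_0 = (q_0, p_0) \in L$ is a critical point, so $T_{x_0} L$ contains a non-zero vertical vector $v_0$. The identity $df = i^*\lambda + f \cdot i^*\beta$ on $L$, combined with the vanishing of $\lambda$ and $\pi^*\beta$ on vertical vectors, gives $df(v_0) = 0$. Combining the caustic geometry at the fold with a small Liouville push $\phi_\epsilon^X$ should produce a family of chord pairs in $L$ aligned with Liouville rays, shrinking toward $x_0$; a Taylor expansion of $f$ along these pairs, exploiting both $df(v_0) = 0$ and the vertical component of $v_0$ along $X|_{x_0}$, should force the quotient $(\ln f(q,tp) - \ln f(q,p))/\ln t$ to attain the boundary value $1$ in a limiting configuration, contradicting the strict inequality. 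Zero-section points ($p_0 = 0$) are handled separately via Weinstein's tubular neighborhood theorem, which makes $L$ locally graphical there.

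\textbf{Step 2: surjectivity, degree one, and conclusion.} A local diffeomorphism between compact connected $n$-manifolds is a covering map; openness plus closedness of $(\pi \circ i)(L) \subseteq M$ gives surjectivity. For degree one, distinct preimages $x_1, x_2 \in L$ over the same $q \in M$ on the same Liouville ray are directly obstructed by the chord condition, while preimages on different rays should be ruled out via the Liouville contraction: as $s \to -\infty$, both $\phi_s^X(x_i)$ approach $(q,0)$, and one exploits injectivity of $\phi_s^X$ on $T^*M \setminus \{p=0\}$ together with the chord condition on the intermediate Lagrangians $L_s$ to reduce to the same-ray case. Thus $\pi \circ i$ is a bijective local diffeomorphism and hence a diffeomorphism.

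\textbf{Main obstacle.} The technical heart is Step 1, namely producing chord pairs near a critical point whose quotient approaches the boundary value $1$. The required Lagrangian singularity analysis at the caustic, and the precise way in which the Liouville direction $X|_{x_0}$ interacts with the vertical tangent vector $v_0$ to generate chord pairs on actual Liouville rays (rather than only on approximations), is delicate; this is precisely where the strictness of the inequality in the chord condition enters to exclude the marginal limiting configuration.
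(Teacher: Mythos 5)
Your proposal has a fatal gap at Step 1, and the overall strategy cannot work. The claim that a vertical tangent vector $v_0\in T_{x_0}i(L)$ forces the chord quotient to approach the boundary value $1$ is the opposite of what actually happens. Since $\lambda$ and the pullback $\pi^*\beta$ both annihilate vertical vectors, at any point where the Liouville field $Z_\lambda$ is tangent to $i(L)$ one gets $df(Z_\lambda)=d_\beta f(Z_\lambda)=i^*\lambda(Z_\lambda)=0$, hence $d\ln(f)(Z_\lambda)=0$; so for chord pairs shrinking toward a fold the quotient tends to $0$, not $1$, and the strict inequality is comfortably satisfied there. (The paper proves precisely this in the lemma at the end of Section 7.1 and uses it to note that essential Liouville chords have length bounded below.) Consequently the chord condition places no obstruction on critical points of $\pi\circ i$. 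Step 2 is also flawed: the hypothesis does not forbid two points $(q,p),(q,tp)\in i(L)$ on the same Liouville ray, it only constrains the values of $f$ at such a pair, so injectivity does not follow. Most tellingly, in the case $\beta=0$ the theorem essentially specializes to the Abouzaid--Kragh theorem (simple homotopy equivalence for $0$-exact Lagrangians), while your conclusion that $\pi\circ i$ is a diffeomorphism would be a strong form of the nearby Lagrangian conjecture; no soft transversality/degree argument can deliver that, and the theorem does not claim it.

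The paper's route is entirely different. The chord condition (``MVT non-obstruction'') is exactly what is needed to extend the positive primitive $f$ from $i(L)$ to a global positive function $g$ on $T^*M$, equal to $1$ outside a compact set, satisfying $d\ln(g)(Z_\lambda)<1$ (Theorem \ref{ThmTAF}, proved via a core/polyhedron reduction and explicit interpolations along Liouville rays). That inequality makes $\lambda/g$ again a Liouville form, and $i^*(\lambda/g)=d\ln(f)-\beta$, so after translating by the closed form $\beta$ the image $i(L)$ becomes a $0$-exact Lagrangian for $\lambda/g$. Moser's trick then produces a fiber-preserving isotopy carrying $(T^*M,d(\lambda/g))$ to $(T^*M,d\lambda)$, and the Abouzaid--Kragh theorem applied to the resulting $0$-exact Lagrangian yields the simple homotopy equivalence; fiber-preservation guarantees the projection is unchanged up to homotopy. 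The deep input (Abouzaid--Kragh) is indispensable and is entirely absent from your argument.
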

\begin{Rem}
Translating the embedding $i$ by $c\beta$ for a constant $c$ yields a new embedding $j$ such that $j^*\lambda=d_\beta(f+c\beta)$. If $f$ is not positive, one can simply take $c$ big enough for $f+c$ to be positive.
\end{Rem}
This theorem should be considered together with the examples that we provide in this paper. In this context, it appears that pairs of points on an exact Lagrangian that are on the same orbit of the flow of the Liouville vector field (call them Liouville chords) are of special interest and seem to be the $\lcs$ version of the Reeb chords. Among other things, in the second part of the section \ref{projection}, we will see that this means that the study of Liouville chords allows to consider the behavior of Reeb chords under a more extensive set of deformations.\newline

The layout of this paper is as follows. We will start with section \ref{Definitions}, laying out the basic definitions of $\lcs$ geometry that are relevant to this paper, and we will follow that with a short discussion about the links between $\lcs$ geometry and more ``standrard'' geometry theories. This will be followed by section \ref{somexample}, giving us our first non-trivial examples of exact Lagrangians. This section will end with a proof of the theorem \ref{thm1}.
Following that, in section \ref{topology} we will prove theorem \ref{thm2}, and follow the proof with a couple of corollaries, some of which are not of purely topological nature but are interesting in their own right. Section \ref{extension} will be dedicated to stating the main ingredient for the proof of theorem \ref{thm3}, which is in essence an extension theorem for some maps; this theorem will be duly motivated. Section \ref{proof} will then follow with a proof of this extension theorem. Finally, in the second to last section, section \ref{projection}, we will finalize the proof of theorem \ref{thm3} and give some corollaries. We will end this section theorem \ref{thm3}, especially in relation to the examples given in section \ref{somexample}.

Note that, although in this paper we only consider a small range of all the possible exact $\lcs$ structures on cotangent spaces which have the canonical Liouville form as an exact $\lcs$ form, the very short lemma \ref{dening}, explains why those assumptions are not as restrictive as it might seem.\newline

\paragraph{\textbf{Acknowledgments.}} The author would like to thank Baptiste Chantraine for enlightening discussions, helping to clarify some of the arguments.

\section{A short overview of $\lcs$ geometry}\label{Definitions}
\subsection{Definitions}
An $\lcs$ structure on a manifold can  be defined in various ways. Three main definitions appear in the literature, the first one of which uses the notion of ``Lichnerowicz differential'', which is a twist on the classical differential (for differential forms). This new derivative is extremely useful for stating the various definitions in $\lcs$ geometry.
\begin{Def}
Let $M$ be a manifold and $\beta\in\Omega^1(M)$ be closed. Then the Lichnerowicz derivative associated to $\beta$ is the map:
\begin{align*}
	d_\beta:\Omega^*(M)&\rightarrow\Omega^{*+1}(M)\\
	\alpha&\mapsto d\alpha-\beta\wedge\alpha
\end{align*}
\end{Def}
One can easily verify that $d_\beta^2=0$, meaning that $(\Omega^*(M),d_\beta)$ is a proper chain complex. This derivative allows us to properly define $\lcs$ structures.
\begin{Def}
Let $M$ be a manifold. Take $\omega\in\Omega^2(M)$ non-degenerate and $\beta\in\Omega^1(M)$ closed such that $d_\beta\omega=0$. The form $\omega$ will be called an $\lcs$ form, whereas $\beta$ will be called the Lee form. The pair $(\omega,\beta)$ will be called an $\lcs$ pair. By $LCS(M)$ we will denote the set of $\lcs$ pairs on $M$ quotiented by the equivalence relation $(\omega,\beta)\sim(e^{g}\omega,\beta+dg)$.

An $\lcs$ manifold is the data of a manifold $M$ and an element of $LCS(M)$. For the sake of simplicity, we will often forgo writing the whole equivalence class, and simply right $(M,\omega,\beta)$ where $(\omega,\beta)$ is an $\lcs$ pair.
\end{Def}
Note that this definition is slightly different than that given in the introduction, which can be formalized as such:
\begin{Def}
	Let $M$ be a manifold of dimension $2n$, and $(U_i\phi_i)_i$ be an atlas on $M$ such that \[(\phi_i\circ\phi^{-1}_j)^*\omega_{\mathbb{R}^{2n}}=c_{i,j}\omega_{\mathbb{R}^{2n}},\]
	where $\omega_{\mathbb{R}^{2n}}$ is the canonical symplectic form on $\mathbb{R}^{2n}$.
	
	Then $(M,(U_i,\phi_i)_i)$ is called an $\lcs$ manifold.
\end{Def}

While the link between the two may not be readily apparent, the reader should keep in mind that, in the first definition, $\beta$ is locally exact equal to $dg$ for some locally defined map $g$, since $\beta$ is closed. This implies that $e^{-g}d\omega$ is (locally) symplectic. This allows us to find an atlas $(U_i,\phi_i)_i$ such that $\phi_i^*\omega_{\mathbb{R}^{2n}}=e^{-g}d\omega$. Given that two primitives of $\beta$ can at most differ by a constant $c$, we have that $\phi_j^*\omega_{\mathbb{R}^{2n}}=e^{-g-c}d\omega=e^{-c}\phi_i^*\omega_{\mathbb{R}^{2n}}$, yielding the second definition. Note that the value of $c$ found this way depends only on the equivalence class of the $\lcs$ pair $(\omega,\beta)$.

Conversely, taking the pullback of $\omega_{\mathbb{R}^{2n}}$ by $\phi_i$ up to a positive constant defines a trivial (half-)line bundle locally. By hypothesis, all the local line bundles thus defined can glue together to form a trivial line bundle over $M$ and taking a section of this line bundle yields a non-degenerate $2$-form which. Observe that the section can be locally written $e^g\phi_i^*\omega_{\mathbb{R}^{2n}}$ for some locally defined map $g$, and that given $g$ and $g'$ two such maps, they can only differ by a constant. Therefore, taking the differentials $dg$ of all the locally defined maps $g$ gives us a section $\beta\in\Omega^1(M)$ such that $d_\beta e^g\phi_i^*\omega_{\mathbb{R}^{2n}}=e^gd\phi_i^*\omega_{\mathbb{R}^{2n}}=0$. Finally, observe that since $\beta$ is locally the differential of a map, it is closed.\newline

Note that going from the second definition to the first requires one to make some arbitrary choices. There is however a way to modify the first definition remove the need to make such choices.

\begin{Def}
	Let $M$ be a manifold and $E$ be a positive trivial half-line bundle over $M$. Take $\nabla$ a flat Koszul connection on $E$, and call $d^\nabla$ the differential induced by $\nabla$. Let $\omega\in\Omega^2(M,E)$ be a non-degenerate form such that $d^\nabla\omega=0$, then $(M,\omega,\nabla)$ is called a $\lcs$ manifold.
\end{Def}

Note that $\omega$ can be represented by $\tilde{\omega}\otimes\sigma$ for some $\tilde{\omega}\in\Omega^2(M)$ and $\sigma$ a section of $E$. With this notation, we have that $d^\nabla\omega=(d\tilde{\omega})\otimes\sigma-(\nabla\sigma\wedge\tilde{\omega})$.

Let $(\omega',\beta)$ and $(e^{-g}\omega',\beta+dg)$ be two $\lcs$ pairs. Define, for each $f\in C^\infty(M,\mathbb{R}_+)$,  $\nabla^\beta_{(\cdot)}f\sigma=\beta(\cdot)\otimes\sigma+df(\cdot)\otimes\sigma$, which is indeed a flat Koszul connection. Remark that $\nabla^{\beta+dg}f\sigma=\nabla^\beta(f+g)\sigma$.  Therefore,  $(\omega'\otimes\sigma,\nabla^\beta)=((e^{-g}\omega')\otimes (e^{g}\sigma),\nabla^\beta)$.

Conversely, for some pair $(\omega,\nabla)$ as in the previous definition, we can associate (uniquely up to equivalence) a $\lcs$ pair $(\omega',\beta)$. Indeed, choose some $\sigma$ a section of $E$, then $\nabla\sigma=\beta\otimes\sigma$ for some $1$-form $\beta$ that is closed since $\nabla$ is flat. Similarly, $\omega=\omega'\otimes\sigma$ for some $2$-form $\omega'$, that is non-degenerate since $\omega$ is non-degenerate. Therefore, $d^\nabla\omega=(d_\beta\omega')\otimes\sigma=0$ implies that $d_\beta\omega'=0$ since $\sigma$ is nowhere $0$ (since $E$ is a trivial positive half-line bundle).\newline

In the rest of this paper, we will stick to the first definition, as it is often the easiest to work with.

\begin{Exe}
The first non-trivial (aka. non-symplectic) example of a $\lcs$ manifold is $(\Sph_\theta^1\times\Sph^3,d_{d\theta}\alpha,d\theta)$, where $\alpha$ is the standard contact form on $\Sph^3$. Indeed, a simple computation yields that $d_{d\theta}\alpha$ is non-degenerate and, as state above, $d_{d\theta}^2=0$. Note that $\Sph_\theta^1\times\Sph^3$ does not have a symplectic structure since having one would imply that the symplectic structure would be exact, which cannot happen on closed manifolds by basic symplectic geometry theory.
\end{Exe}

Observe that the fact that $\lcs$ manifolds are indeed locally symplectic manifolds up to some conformal factor implies that locally defined objects can still be defined in this setting. As such, the definitions of isotropic, coisotropic, ``symplectic'' (here, $\lcs$) and Lagrangian submanifolds can be adapted to this new setting with very little change. In this paper however, we are only interested in Lagrangians.

\begin{Def}
	Let $(M,\omega,\beta)$ be a $\lcs$ manifold of dimension $2n$, and $L$ be a manifold of dimension $n$. An embedding (resp. immersion) $i:L\rightarrow M$ such that $i^*\omega=0$ is called a Lagrangian embedding (resp. immersion) and $i(L)$ is called a (resp. immersed) Lagrangian submanifold. The ``submanifold'' is sometimes dropped.
\end{Def}

As said in the introduction, there is a notion of exact $\lcs$ manifold. Indeed, just like a symplectic form is closed and thus can sometimes be exact, a $\lcs$ form is $\beta$-closed ($d_\beta\omega=0$) and therefore can sometimes be $\beta$-exact ($d_\beta\lambda=\omega$). This is, for example, the case in the previous example.

\begin{Def}
	Let $(M,\omega,\beta)$ be a $\lcs$ manifold. If $\omega=d_\beta\lambda$ for some $\lambda\in\Omega^1(M)$, then the pair $(\lambda,\beta)$ will be called an exact $\lcs$ pair and $\lambda$ will be called an exact $\lcs$ form. By $ELCS(M)$ we will denote the set of exact $\lcs$ pairs on $M$ quotiented by the equivalence relation $(\lambda,\beta)\sim(e^{g}(\lambda+d_\beta f),\beta+dg)$ for some maps $f$ and $g$.
	
	An exact $\lcs$ manifold is the data of a manifold $M$ and an element of $ELCS(M)$. For the sake of simplicity, we will often forgo writing the whole equivalence class, and simply write $(M,\lambda,\beta)$ where $(\lambda,\beta)$ is an exact $\lcs$ pair.
\end{Def}

Note that, given a $1$-form $\lambda$ and a closed $1$-form $\beta$, if $d_\beta\lambda$ is non-degenerate, then $(\lambda,\beta)$ is an $\lcs$ pair. This is because $d_\beta^2=0$.

\begin{Exe}\label{exe2}
The first example of an exact $\lcs$ manifold, and the one that is the main concern of this paper, is the cotangent bundle of a manifold $M$ endowed with the canonical Liouville form $\lambda$ and with a Lee form given by the pullback of a closed $1$-form $\beta\in\Omega^1(M)$.
\end{Exe}

\begin{Rem}
	In the rest of this paper, whenever we are talking about $\lcs$ structures on cotangent bundles, the structure will be taken as in the example above, and the various pullbacks of $\beta$ will also often be called $\beta$ by slight abuse of notation.
\end{Rem}

The attentive reader will have noticed that in example \ref{exe2} (and in the rest of this paper), we are considering only very specific type of exact $\lcs$ structures on $T^*M$ : the structures of the form $(T^*M,\lambda,\beta)$, for $\lambda$ the canonical Liouville form and $\beta\in\Omega^1(M)$ closed. Indeed, there might be some exact $\lcs$ structures of the form $(T^*M,\lambda,\beta')$ for $\beta'$ a closed $1$-form on $T^*M$. Before seeing how to broaden the various results of this paper to more general exact $\lcs$ structures on $T^*M$ in the following lemma, let us make a few observations. First, for any such $\beta'$ there is a $g\in C^\infty(T^*M)$ and a $\beta\in\Omega^1(M)$ closed such that $\beta'=\beta+dg$. Second, note that $(T^*M,e^{-g}\lambda,\beta)$ represents the same exact $\lcs$ structure as $(T^*M,\lambda,\beta+dg)$. Among other things, this implies that $(T^*M,e^{-g}\lambda,\beta)$ is an exact $\lcs$ manifold.

\begin{Lem}\label{dening}
	Let $M$ be a manifold, $\lambda$ the canonical Liouville form on $T^*M$ and $\beta$ the pullback on $T^*M$ of some closed $1$-form on $M$. If $(T^*M,e^{-g}\lambda,\beta)$ is an exact $\lcs$ manifold, then
	there is a map $\phi: T^*M\rightarrow T^*M$ that is a diffeomorphism on its image such that $\phi^*\lambda=e^{-g}\lambda$, $\phi^*\beta=\beta$ and $\phi(T_qM)\subset T_qM$ for all $q\in M$. If for some metric compatible with $d_\beta\lambda$, for any $q\in M$, for any non-zero $p\in T^*_qM$ and any $t>0$, we have \[g\left(q,t\frac{p}{\|p\|}\right)=o_{+\infty}(\ln(t)),\] then $\phi$ is a diffeomorphism.
\end{Lem}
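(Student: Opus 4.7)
The plan is to take $\phi$ to be the fibrewise radial rescaling $\phi(q,p):=(q,e^{-g(q,p)}p)$. This map is smooth, fixes the zero section, and preserves each cotangent fibre. The three identities follow almost for free: since $\pi\circ\phi=\pi$, any form pulled back from $M$ (in particular $\beta$) satisfies $\phi^*\beta=\beta$; and using the tautological formula $\lambda_{(q,p)}(v)=p(d\pi\,v)$ together with $\pi\circ\phi=\pi$ gives
\[
(\phi^*\lambda)_{(q,p)}(v)=\bigl(e^{-g(q,p)}p\bigr)(d\pi\,v)=e^{-g(q,p)}\lambda_{(q,p)}(v).
\]

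Pulling back the differential relation $d_\beta$ then yields $\phi^*(d_\beta\lambda)=d_\beta(e^{-g}\lambda)$. Both sides are non-degenerate (the right by hypothesis, the left because it is the pullback of the standard $\lcs$ form on $T^*M$), so $d\phi$ is pointwise an isomorphism and $\phi$ is a local diffeomorphism.

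The crux is injectivity. Because $\phi$ preserves fibres and $e^{-g}>0$, an equality $\phi(q,p_1)=\phi(q,p_2)$ forces $p_1$ and $p_2$ onto the same positive ray; writing $p_j=t_j u$ for a unit covector $u$, the question reduces to the strict monotonicity of $r(t):=te^{-g(q,tu)}$. A direct differentiation gives
\[
r'(t)=e^{-g(q,tu)}\bigl(1-Z(g)|_{(q,tu)}\bigr),
\]
where $Z=p_i\partial_{p_i}$ is the Liouville vector field on the fibres. The essential input is that non-degeneracy forces $Z(g)<1$ everywhere: in canonical coordinates, using $L_Z\lambda=\lambda$, $\iota_Z\lambda=0$, and the horizontality $\iota_Z\beta=0$, a short computation yields
\[
\iota_Z\,d_\beta(e^{-g}\lambda)=e^{-g}\bigl(1-Z(g)\bigr)\lambda.
\]
Off the zero section $Z\neq 0$ and $\lambda\neq 0$, so non-degeneracy of $d_\beta(e^{-g}\lambda)$ gives $1-Z(g)\neq 0$ there; on the zero section $Z(g)=0<1$, and continuity together with connectedness of the components extend the strict inequality to all of $T^*M$. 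Hence $r$ is strictly increasing, $\phi$ is injective on each fibre and so globally, and thus a diffeomorphism onto its image.

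For the final claim, the asymptotic hypothesis $g(q,tp/\|p\|)=o_{+\infty}(\ln t)$ implies $r(t)=te^{-g(q,tu)}\to+\infty$ as $t\to+\infty$. Together with $r(0)=0$ and strict monotonicity, $r\colon[0,+\infty)\to[0,+\infty)$ is a bijection, so $\phi$ is surjective on every cotangent fibre and hence globally; combined with being a local diffeomorphism, this upgrades $\phi$ to a global diffeomorphism. The technical heart of the argument is the identity $\iota_Z\,d_\beta(e^{-g}\lambda)=e^{-g}(1-Z(g))\lambda$, which converts the non-degeneracy hypothesis into the one-sided bound $Z(g)<1$; everything else is formal once this identity is in hand.
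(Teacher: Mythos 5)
Your proof is correct and follows essentially the same route as the paper: the same fibrewise rescaling $\phi(q,p)=(q,e^{-g(q,p)}p)$, with injectivity reduced to monotonicity of $t\mapsto te^{-g(q,tu)}$ via the identity $\iota_{Z}\,d_\beta(e^{-g}\lambda)=e^{-g}(1-Z(g))\lambda$ and non-degeneracy, and surjectivity from the logarithmic growth hypothesis. The only (welcome) differences are organizational: you derive $Z(g)<1$ directly rather than by contradiction, and you explicitly check $\phi^*\lambda=e^{-g}\lambda$, $\phi^*\beta=\beta$ and the local-diffeomorphism property via non-degeneracy of the pulled-back form, points the paper leaves implicit.
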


\begin{proof}
	In local coordinates, let $\phi(q,p)=(q,e^{-g(q,p)}p)$. If there are two distinct points $(q,p_1)$ and $(q,p_2)$ such that $e^{-g(q,p_1)}p_1=e^{-g(q,p_2)}p_2$, then there is a third (non-zero) point $p_3\in T^*_qM$ such that $\partial_{t}(e^{-g(q,tp_3)}tp_3)_{(t=1)}$ $=0$, which implies that $\big((\partial_tg(q,tp_3))tp_3\big)_{t=1}=\big((\partial_tg(q,tp_3))\big)_{t=1}p_3=p_3$ and therefore \[(\partial_tg(q,tp_3))_{t=1}=1.\] This yields that, for $Z_\lambda$ the Liouville vector field of $\lambda$ (for some metric), 
	\begin{align*}
		\big(\iota_{Z_\lambda}d_\beta (e^{-g}\lambda)\big)_{(q,p_3)}=&e^{-g(q,p_3)}\big(\iota_{Z_\lambda}(d\lambda-dg\wedge\lambda-\beta\wedge\lambda)\big)_{(q,p_3)}\\
		=&e^{-g(q,p_3)}\big(\iota_{Z_\lambda}d\lambda-dg(Z_\lambda)\lambda\big)_{(q,p_3)}\\
		=&e^{-g(q,p_3)}\big(\lambda-dg(Z_\lambda)\lambda\big)_{(q,p_3)}\\
		=&e^{-g(q,p_3)}\big((\lambda)_{(q,p_3)}-(\lambda)_{(q,p_3)}\big)=0,
	\end{align*}
	  which is absurd since $d_\beta (e^{-g}\lambda)$ is non-degenerate by definition.
	  
	  Do observe that this implies that, for any non-zero $p$, $\partial_{t}(e^{-g(q,tp)}tp)_{(t=1)}>0$ and therefore, using the chosen metric,
	  \begin{align*}
	  	1>&\partial_tg(q,tp)_{t=1}\\=&\partial_tg\left(q,t\|p\|\frac{p}{\|p\|}\right)_{t=1}\\=&\partial_{t'}g\left(q,t'\frac{p}{\|p\|}\right)_{t'=\|p\|}\times\|p\|\\=&Dg_{q,p}(\|p\|\partial_p)=dg(Z_\lambda)_{q,p}.
	  \end{align*} Solving \[\frac{1}{t'}>\partial_{t'}g\left(q,t'\frac{p}{\|p\|}\right)\] yields that, for $t'>1$, \[\ln(t')>g\left(q,t'\frac{p}{\|p\|}\right)-g\left(q,\frac{p}{\|p\|}\right)\]
	  
	  Finally, note that a neighborhood of the $0$-section is always in the image of $\phi$. Therefore, for the image of $\phi$ not to be $T^*M$, we would need for it to be bounded at some point, that is to say we would need a non-zero $p\in T_q^*M$ for some $q\in M$ and a constant $N$ such that for any $t>0$, $e^{-g\left(q,t\frac{p}{\|p\|}\right)}t\leq N$. However, under the hypothesis of the lemma
	  \begin{align*}
	  e^{-g\left(q,t\frac{p}{\|p\|}\right)}t=e^{\ln(t)(1+o_{+\infty}(1))}\underset{t\rightarrow+\infty}{\rightarrow}+\infty
	  \end{align*}
	  and therefore $\phi$ is a diffeomorphism.
\end{proof}

\begin{Rem}
Note that for $(T^*M,e^{-g}\lambda,\beta)$ to be an exact $\lcs$ manifold, we will see at the beginning of section \ref{extension} that it is sufficient that $dg(Z_\lambda)_{q,p}<1$. More precisely, we will see that this implies that $d(e^{-g}\lambda)$ is nondegenerate, but a short computation in local coordinates yields that $(d_\beta(e^{-g}\lambda))^{\wedge n}=(d(e^{-g}\lambda))^{\wedge n}$. Therefore, if one is not too concerned with what happens at infinity, one could just swap out $g$ for a new  map $h$ such that $h=g$ on some arbitrarily large ball and such that $h$ is constant on some neighborhood of infinity.
\end{Rem}

Let us go back to our definitions. Notice that given a Lagrangian embedding/immersion $i:L\rightarrow M$, we have that $i^*\lambda$ is $i^*\beta$-closed and, therefore, it might be $\beta$-exact.

\begin{Def}
	Let $(M,\lambda,\beta)$ be an exact $\lcs$ manifold of dimension $2n$, and $L$ be a manifold of dimension $n$. An embedding (resp. immersion) $i:L\rightarrow M$ such that $i^*\lambda=d_{i^*\beta}f$ for some $f\in C^\infty(L)$ is called a $\beta$-exact Lagrangian embedding (resp. immersion) and $i(L)$ is called a (resp. immersed) Lagrangian submanifold. The ``submanifold'' is sometimes dropped. Whenever $\beta$ does not matter or is implicit, ``$\beta$-exact'' will just be written as ``exact''.
\end{Def}

 Just as graphs of functions are the first examples of $0$-exact Lagrangians, ``$\beta$-graphs'' will also be our first example
 
 \begin{Exe}
 Let $M$ be a manifold, $\beta\in\Omega^1(M)$ be closed and $f:M\rightarrow\R$ be smooth. Then $\Gamma_\beta(f)=\{d_\beta f_{|x}:x\in M\}$ is an exact Lagrangian of $(T^*M,\lambda,\beta)$.
 \end{Exe}

Just as this notion is generalized by that of generating functions in symplectic geometry, there is a notion of generating functions in $\lcs$ geometry.

\begin{Def}
Let $M$ be a manifold and, for some $k\in\N$, let \[F:M\times\R^k\rightarrow\R\] be a smooth map. By $D_{\R^k} F$, we denote the differential of $F$ along $\R^k$. If there is a compact $K\subset\R^k$ such that $F$ is quadratic outside of $M\times K$ (aka. is ``quadratic at infinity'') and $d_\beta F$ intersects $(T^*M)\times\R^k$ transversely, then $F$ will be called a generating function.

Let $\beta\in M$ be closed and call $\beta$ its pullback on $M\times\R^k$. Define \[V_F:=\Gamma_\beta(F)\cap (T^*M)\times\R^k\subset T^*(M\times\R^k)\]
This is a submanifold, and the projection of $V_F$ on $T^*M$ is an immersed submanifold, called the (immersed) Lagrangian submanifold associated to $F$ and denoted $L_F$.
\end{Def}

As stated previously, we want to better understand how this generalization impacts rigidity for exact Lagrangians. This leads us to seek examples of exact Lagrangians.

\subsection{Links with symplectic and contact}

Given the current dearth of tools to study rigidity in $\lcs$ geometry, it would be fair to ask if the endeavor is worth the effort. While there is no substitute for personal interest in the topic for its own sake, we will nonetheless provide links with other areas of geometry.\newline

\paragraph{\textbf{The symplectic perspective.}} While $0$-exact Lagrangian submanifolds have been and are being extensively studied, less is known about the behavior of immersed $0$-exact Lagrangians, even in cotangent bundles. Indeed, it is essentially the same as studying Legendrians in jet spaces, which can have a wide range of behaviors. A better understanding of $\lcs$ geometry could shed some light on this problem, as $\beta$-exact Lagrangians can be viewed as immersed $0$-exact Lagrangians.

Given a manifold $M$, take, for some closed $\beta\in\Omega^1(M)$, a $\beta$-exact Lagrangian embedding  $i:L\rightarrow (T^*M,\lambda,\beta)$ such that $i^*\lambda=d_\beta f$. Call $i_1$ the composition of $i$ with the projection on $M$, and in local coordinates, $i_2$ such that for any $l\in L$, $i(l)=(i_1(l),i_2(l))$. Then $j=(i_1,i_2+f\beta )$ is a $0$-exact Lagrangian immersion of $L$.

Beside the applications of $\lcs$ geometry to the study of Lagrangians, this generalization of symplectic geometry can be used to apply (some of) the tool of symplectic geometry to the study of a wider range of manifolds, as mentioned at the beginning of the article.\newline

\paragraph{\textbf{The contact point of view.}} As implied in the previous paragraph, te study of immersed Lagrangians in dimension $2n$ allow us to better understand Legendrians in dimension $2n+1$. Indeed, keeping the same conventions as in the previous paragraph and writing $dim(M)=2n+1$, $j':L\rightarrow J^1M$ such that $j'(l)=(i_1(l),i_2(l)+f(l)\beta_{i(l)},f(l))$ is a Legendrian embedding for the canonical contact form $\alpha$. Another way to see the link with contact geometry is by considering that the embedding $i':L\rightarrow J^1M$ such that $i'(l)=(i_1(l),i_2(l),f(l))$ is a Legendrian embedding for the contact form $\alpha'=\alpha+z\beta$. Do note that $(J^1M,\alpha)$ and $(J^1M,\alpha')$ are contactomorphic.

\begin{proof}[proof that $\alpha'$ is a contact form] \begin{align*}
\alpha'\wedge (d\alpha')^n= &(\alpha+z\beta)\wedge((d\alpha)^n+n(d\alpha)^{n-1}\wedge dz\wedge \beta)\\
= & \alpha\wedge (d\alpha)^n+ndz\wedge\omega_M^{n-1}\wedge\lambda_M\wedge\beta
\end{align*}
with $\lambda_M$ the canonical Liouville form on $M$ and $\omega_M=d\lambda_M$. Writing everything in local coordinates, one can easily see that $\omega_M^{n-1}\wedge\lambda_M\wedge\beta=0$.
\end{proof}

However, one can also use $\lcs$ geometry in dimension $2n+2$ to better understand contact geometry in dimension $2n+1$. Indeed, given any contact manifold $(M,\alpha)$, $(M\times\Sph^1_\theta,\alpha,d\theta)$ is a $\lcs$ manifold, and a Legendrian $\Lambda$ will lift to a $d\theta$-exact Lagrangian $\Lambda\times\Sph^1$. 

Moreover, if we fix a volume form $Vol$ and a (co-oriented) contact structure on $M$, then there is a canonical way to lift it to a $\lcs$ structure. To do this, fix any contact form $\alpha$ on $M$ compatible with the contact structure and the volume in the sense that $\alpha\wedge(d\alpha)^n=e^{-(n+1)g}Vol$. Note that the map $g\in C^\infty(M)$ is uniquely defined. Then $(M\times\Sph^1,\alpha,d\theta-dg)$ is a $\lcs$ manifold and for any map $h\in C^\infty(M)$, the contact manifold $(M,e^h\alpha)$ lifts to $(M\times\Sph^1_\theta,e^h\alpha,d\theta-dg+dh)$. In particular, $Vol=e^g\alpha\wedge(de^g\alpha)^n$ and the contact manifold $(M,e^g\alpha)$ lifts to $(M\times\Sph^1_\theta,e^g\alpha,d\theta)$ with $(d_{d\theta}e^g\alpha)^{n+1}=-nd\theta\wedge Vol$.

Perhaps the best way of summing up those relationships is with the following diagram:

\begin{center}
		\begin{tikzpicture}[shorten <=2pt,shorten >=2pt,>=stealth,every node/.style={rectangle,draw=black,align=center}] \node(c){contact\\ $dim=2n+1$}; \node(b)[ left =of c]{symplectic\\ $dim=2n$} edge [->](c); \node(d)[right= of c]{$\lcs$\\ $dim=2n+2$} edge [<-] (c); \end{tikzpicture}
	\end{center}

\section{New examples of exact Lagrangian submanifolds}\label{somexample}

As we have seen in the previous section, a $\lcs$ structure is much less rigid than a symplectic structure. However, as we will see, even if a manifold has both a symplectic structure and a $\lcs$ structure which is locally conformally equal to the symplectic one, the behaviors can nonetheless differ.

Keeping in mind Abouzaid and Kragh's theorem as found in \cite{AbouzaidKragh2018SHENL}, let us consider some examples.

\textbf{Example 1:}
\begin{align*}
	i:\mathbb{T}^2&\rightarrow T^*\mathbb{T}^2\\
	(\theta,\phi)&\mapsto (2\theta,\phi,\frac{1}{2}\cos(\theta),-\sin(\theta))
\end{align*}

This is a torus embedded in the cotangent bundle of the torus in such a way that, for $\pi:T^*\mathbb{T}\rightarrow\mathbb{T}$ the projection, $\pi\circ i$ is a $2$-cover of the torus. We can easily check that \[i^*\lambda=\frac{1}{2}\cos(\theta)d2\theta-\sin(\theta)d\phi=d\sin(\theta)-\sin(\theta)d\phi.\]
As such, $i(\mathbb{T}^2)$ is an exact Lagrangian submanifold of $(T^*\mathbb{T}^2,\lambda,d\phi)$.\newline

\textbf{Example 2:}
\begin{align*}
	j:\mathbb{T}^2&\rightarrow T^*\mathbb{T}^2\\
	(\theta,\phi)&\mapsto (\cos(\theta),\phi,3\sin(\theta)\cos(\theta),\sin(\theta)^3)
\end{align*}
This is a torus embedded in the cotangent bundle of the torus in such a way that the embedded torus and the $0$-section form something akin to a Hopf link.
Another quick computation yields \[j^*\lambda=3\sin(\theta)\cos(\theta)d\cos(\theta)-\sin(\theta)^3d\phi=d(-\sin(\theta)^3)+\sin(\theta)^3d\phi.\] Therefore $j(\mathbb{T}^2)$ is an exact Lagrangian submanifold of $(T^*\mathbb{T}^2,\lambda,d\phi)$.\newline

The idea behind those examples can be applied to other Legendrians
\begin{Const}\label{const1}
Let $\lambda_M$ be the canonical Liouville form on $T^*M$ and take a (Legendrian) submanifold $i:\Lambda\rightarrow J^1M\simeq T^*M\times\mathbb{R}_z$ such that $i^*(dz-\lambda_M)=0$, Then there is a map $f\in C^\infty(\Lambda)$ such that $i^*\lambda_M=df=i^* dz$. Taking $i_M(l)$ to be the projection of $i(l)$ on $T^*M$, we can define:
\begin{align*}
	j:\Lambda\times\mathbb{S}^1&\rightarrow T^*(M\times\mathbb{S}^1)=T^*M\times T^*\mathbb{S}^1\\
	(l,\theta)&\mapsto (i_M(l),\theta,-f)
\end{align*}
This is an exact Lagrangian embedding in $(T^*(M\times\mathbb{S}^1),\lambda,d\theta)$ for $\lambda$ the canonical Liouville form. Indeed, $j^*\lambda=df-fd\theta$. \begin{flushright}
\textbf{ End of construction \theConst.}
\end{flushright}
\end{Const}

\begin{Rem}\label{rem211}

The reader familiar with $\lcs$ geometry might raise the point that this seems very similar to the canonical $\lcs$-ification of a contact manifold. Indeed, keeping in mind that the $\lcs$-ification of $(J^1M,dz-\lambda_M)$ is $((J^1M)\times\Sph^1_\theta,dz-\lambda_M,d\theta)$, we can see that the diffeomorphism
\begin{align*}
	g:T^*(M\times\mathbb{S}^1)&\rightarrow J^1 M\times\mathbb{S}^1\\
	(q,p,\theta,z)&\mapsto(q,-p,\theta,z)
\end{align*}
does verify $g^*(dz-\lambda_M)=\lambda_{M\times \mathbb{S}^1}+d_{d\theta}z$.
\end{Rem}

Generalization of the above construction can go in several directions. First, $\Sph^1$ can be replaced with a more general manifold.

\begin{Const}
	Take any manifold $Q$ on which there is a nowhere-$0$ $1$-form $\beta$. Let $\lambda_M$ be the canonical Liouville form on $T^*M$ and take a (Legendrian) submanifold $i:\Lambda\rightarrow J^1M\simeq T^*M\times\mathbb{R}_z$ such that $i^*(dz-\lambda_M)=0$. Then there is a map $f\in C^\infty(\Lambda)$ such that $i^*\lambda_M=df=i^* dz$. Taking $i_M(l)$ to be the projection of $i(l)$ on $T^*M$, we can define:
	\begin{align*}
		j:\Lambda\times Q&\rightarrow T^*(M\times Q)=T^*M\times T^*Q\\
		(l,q)&\mapsto (i_M(l),q,-f(l)\beta_q)
	\end{align*}
	This is an exact Lagrangian embedding in $(T^*(M\times Q),\lambda,\beta)$ for $\lambda$ the canonical Liouville form. Indeed, $j^*\lambda=df-f\beta$. \begin{flushright}
		\textbf{ End of construction \theConst.}
	\end{flushright}
\end{Const}

We can also generalize the construction \ref{const1} by allowing for submanifolds with more topology than $L\times\Sph^1$.

\begin{Const}\label{ExeCob}
	Let $L$ be a Legendrian in $(J^1M,\alpha)$, with $\alpha=dz-\lambda_M$ the canonical contact form. Let $\Lambda$ be a $0$-exact Lagrangian in the symplectization $(J^1M\times\mathbb{R},e^{-t}\alpha)$. We have the two following conditions:
	\begin{enumerate}
	\item[1.] for $t_0$ big enough, there is an $\epsilon>0$, such that \[\Lambda\cap J^1M\times((-\infty;\epsilon]\cup [t_0-\epsilon,+\infty))=L\times((-\infty,\epsilon]\cup [t_0-\epsilon,+\infty));\]
	\item[2.] $d_{dt}e^{t}f$ has a primitive (for the derivative $d_{dt}$) $F$ such that\\  $ F_{|\Lambda\cap J^1M\times \{0\}}=F_{|\Lambda\cap J^1M\times \{t_0\}}$.
	\end{enumerate} 
	Note that $\Lambda$ is a $dt$-exact Lagrangian in the canonical $\lcs$-ization of $(J^1M\times\mathbb{R},\alpha)$.
	
	Assume that the first condition holds true. Since the pullback of  $\alpha$ to $\Lambda\cap J^1M\times((-\infty;\epsilon]\cup [t_0-\epsilon,+\infty))$ is equal to $0$, we can cut off the extremities $t<0$ and $t>t_0$, and glue $\tilde{\Lambda}\cap J^1M\times\{0\}$ to $\tilde{\Lambda}\cap J^1M\times\{0\}$ through the (canonical) identification $(J^1M\times\{0\},\alpha)=(J^1M\times\{t_0\},\alpha)$. \begin{enumerate}
		\item[]\textbf{Assertion 1:} this yields a Lagrangian $\tilde{\Lambda}$ in $(T^*(M\times\mathbb{S}_\theta^1),\lambda,d\theta)$, with $\mathbb{S}^1=\mathbb{R}/(t_0\mathbb{Z})$.
	\end{enumerate}
	
	If, moreover, the second condition holds true, then: 
	\begin{enumerate}
		\item[]\textbf{Assertion 2:} $\tilde{\Lambda}$  is a $d\theta$-exact Lagrangian. 
	\end{enumerate}
	This ``then'' is actually an if and only if.
	\begin{flushright}
		\textbf{ End of construction \theConst.}
	\end{flushright}
	\end{Const}
Now, there are a couple of assertions in this construction that should be proven.
\begin{proof}[proof of the assertions]
	First, we need to show that $\tilde{\Lambda}$ is indeed a Lagrangian, that is to say, $d_{d\theta}\lambda$ restricts to $0$ on the submanifold. Since the pullback of $e^{-t}\alpha$ on $\Lambda$ is $df$, the pullback of $\alpha$ on $\Lambda$ is $e^tdf=d_{dt}e^{t}f$. Therefore, after projecting on $T^*(M\times\mathbb{S}_\theta^1)$, $\alpha$ goes down to the projection and the pullback of $d_{d\theta}\alpha$ on $\tilde{\Lambda}$ is locally equal to $d_{dt}^2e^{t}f=0$.
	
	Second,we need to show that $\tilde{\Lambda}$ is a $d\theta$-exact Lagrangian when the conditions are fulfilled. Note that if $d_{dt}e^{t}f$ has such a primitive $F$, then there is a constant $c$ such that \[(e^{t}(f+c))_{|\Lambda\cap J^1M\times \{0\}}=(e^{t}(f+c))_{|\Lambda\cap J^1M\times \{t_0\}}.\] Since the pullback of $\alpha$ on $L$ is $0$, the map $f$ is constant when restricted to the times $t=0$ and $t=1$. Therefore, the condition can be summed up as (for $L$ connected) :\[\exists l\in L,\exists c\in\mathbb{R}/ f_0(l)+c=e^{t_0}(f_{t_0}(l)+c).\]
	More specifically, if this last condition is fulfilled, then $e^{t}f$ goes down to $T^*(M\times\mathbb{S}_\theta^1)$.
\end{proof}
The conditions for $\tilde{\Lambda}$ to be exact may appear as being restrictive. However, do note that we have the following property :
\begin{Pro}
	Let $\Lambda$ be as in the construction \ref{ExeCob}, and such that the first condition holds true.
	Then there is an isotopy $\phi$ of $J^1M\times\mathbb{R}$ such that $\phi_1(\Lambda)$ fulfills both conditions of the construction \ref{ExeCob} and such that $\phi$ respects the fibers of $J^1M\times\mathbb{R}\simeq T^*(M\times\mathbb{R})$. That is to say that, for every $t$, $\phi_t(T_x^*(M\times\mathbb{R}))=T_x^*(M\times\mathbb{R})$.
\end{Pro}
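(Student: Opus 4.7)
The strategy is to build the isotopy as a fiberwise translation of $T^*(M\times\mathbb R)\simeq J^1M\times\mathbb R$ (via Remark~\ref{rem211}) by a $d_{dt}$-closed $1$-form on the base, choosing the form so that the primitive of $\Lambda$ is shifted into satisfying condition~2, while condition~1 is preserved by supporting the form in the interior.

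First I would reformulate condition~2 numerically. Viewing $\Lambda$ as a $dt$-exact Lagrangian of $(T^*(M\times\mathbb R),\lambda,dt)$ with primitive $f\in C^\infty(\Lambda)$, the Legendrian character of $L$ together with condition~1 forces $f$ to be locally constant on each connected component $L_i$ of $L$ at both ends, with values $f_0^{(i)}$ and $f_{t_0}^{(i)}$. The analysis in the proof of the assertions of construction~\ref{ExeCob} shows that condition~2 is equivalent to requiring the quantity $A_i:=f_0^{(i)}-e^{t_0}f_{t_0}^{(i)}$ to be independent of~$i$.

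Next, for $\omega$ a closed $1$-form on $M$ (pulled back to $M\times\mathbb R$), the form $\sigma:=e^t\omega$ is $d_{dt}$-closed, since $d_{dt}(e^t\omega)=e^t\,d\omega=0$. Hence the fiberwise translation $\phi_\tau(x,\xi):=(x,\xi+\tau\sigma(x))$ is a fiber-preserving diffeomorphism preserving the $\lcs$ $2$-form $d_{dt}\lambda$, and so sends Lagrangians to Lagrangians. Provided $\omega|_\Lambda$ is exact on $\Lambda$ with primitive $\psi$, a direct computation (solving $du-u\,dt=\tau e^t\omega|_\Lambda$ with the ansatz $u=e^t v$) gives that the primitive of $\phi_\tau(\Lambda)$, identified with $\Lambda$ via $\phi_\tau$, is $f+\tau\psi$ up to an additive constant. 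Taking $\omega$ supported inside $M\times(\epsilon,t_0-\epsilon)$ makes $\phi_\tau$ the identity on both ends of $\Lambda$, so condition~1 is preserved verbatim; meanwhile $\psi$ is locally constant on each end with values $\alpha_i:=\psi|_{L_i\times\{0\}}$ and $\beta_i:=\psi|_{L_i\times\{t_0\}}$ determined by the integrals of $\omega$ along paths in $\Lambda$ connecting the two copies of each $L_i$. The new $A_i$ becomes $A_i+\tau(\alpha_i-e^{t_0}\beta_i)$, and choosing these integrals so that all new $A_i$'s equal $A_1$ completes the construction.

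\textbf{Main obstacle.} The principal hurdle is to produce a closed $1$-form $\omega$ realising prescribed integrals $\alpha_i-e^{t_0}\beta_i=A_1-A_i$ along the chosen paths, while keeping $\omega|_\Lambda$ exact on $\Lambda$; this is a cohomological question governed by the map $H_1(\Lambda;\mathbb R)\to H_1(M;\mathbb R)$. When this map has sufficiently large image, bump-form constructions in tubular neighborhoods of disjoint interior paths linking the pairs of end copies of each $L_i$ supply the needed freedom. In general, one either enlarges the class of $d_{dt}$-closed $1$-forms used (beyond those pulled back from $M$), or performs a small preliminary perturbation of $\Lambda$ on its interior — permissible since condition~1 only constrains the ends of $\Lambda$ and the fiber-preserving requirement imposes nothing in between — to introduce the required topological structure.
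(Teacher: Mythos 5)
Your core mechanism coincides with the paper's. The paper takes the Hamiltonian flow of $-h(t)$ in $(J^1M\times\mathbb{R},e^{-t}\alpha)$, where $h$ is a primitive of a bump form $h'(t)\,dt$ supported in $]t_0-\tfrac{3\epsilon}{4},t_0-\tfrac{\epsilon}{4}[$ with integral $e^{-t_0}f_0-f_{t_0}$; the generating vector field is $e^th'(t)\partial_z$, so this flow is precisely your fiberwise translation by $\tau e^t\omega$ with $\omega=dh$, it pulls $e^{-t}\alpha$ back to $e^{-t}\alpha+dh$, and the primitive becomes $f+h$ exactly as in your computation. Your numerical reformulation of condition~2 (the quantities $A_i=f_0^{(i)}-e^{t_0}f_{t_0}^{(i)}$ must all agree) is also correct. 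Two caveats. First, there is an internal inconsistency in your setup: a nonzero closed $1$-form pulled back from $M$ is $t$-independent and can never be supported in $M\times(\epsilon,t_0-\epsilon)$; you must work with closed $1$-forms on $M\times\mathbb{R}$ itself, and any such form supported in the interior is exact, $\omega=dG$ with $G$ locally constant on each end --- which is exactly the paper's $h'(t)\,dt$. Second, this observation undercuts the resolution you sketch for your ``main obstacle'': for connected $M$ such a $G$ takes a single value on the whole bottom end and a single value on the whole top end, so the translation shifts every $A_i$ by the same amount and cannot equalize components with distinct $A_i$; bump forms concentrated near interior paths will simply fail to be closed, so the extra freedom you hope for is not there. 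To be fair, the paper's own proof has the same blind spot: it tacitly takes $e^{-t_0}f_0(l)-f_{t_0}(l)$ to be independent of $l$ (e.g.\ $L$ connected), and in that regime your argument is correct and is the paper's argument.
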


\begin{proof}
	Let $h':\mathbb{R}\rightarrow\mathbb{R}$ be a smooth map equal to $0$ outside of $ ]t_0-\frac{3\epsilon}{4},t_0-\frac{\epsilon}{4}[$ and with integral $e^{-t_0}f_0(l)-f_{t_0}(l)$ for any $l\in L$ (this map can be found, for example, by taking an approximation of unity with arbitrarily small support and multiplying by the desired constant). Take $h$ a primitive of $h'$ such that $h(0)=0$ and also call $h$ the pullback of $h$ on $M\times\mathbb{R}$ through the projection on the second factor. Take $\phi_s$ the Hamiltonian flow in $(J^1M\times\mathbb{R},e^{-t}\alpha)$ associated with the map $-h$. Note that the associated vector field $X_{-h}$ is defined such that \[e^{-t}\iota_{X_{-h}}d\alpha+e^{-t}\iota_{X_{-h}}\alpha\wedge dt=dh.\] Since $dh$ is co-linear to $dt$, the vector field $X_{-h}$ must verify $dt(X_{-h})=0$. Therefore, the flow $\phi_s$ descends to a family of flows $(\phi_s^t)_t$ (depending on both $s$ and $t$) $J^1M$. Take: \[\Lambda'=\{(\phi^t_1(q,p,z),t):(q,p,z,t)\in\Lambda,(q,p)\in T_q^*M,t\in\mathbb{R}\}\] 
	Then $\Lambda'$ is Hamiltonian isotopic to $\Lambda$ and the pullback of $e^{-t}\alpha$ by $\phi_1$ is equal to $e^{-t}\alpha+dh$. Therefore, the pullback of $e^{-t}\alpha$ on $\lambda'$ is equal to $d(f+h)$ and, by the definition of $h$, $f_0+h_0=e^{t_0}(f_{t_0}+h_{t_0})$. Thus $\Lambda'$ descends to a $d\theta$-exact Lagrangian in $(T^*(M\times\Sph^1),\alpha,d\theta)$.

	Note that whenever a submanifold $S$ is a Lagrangian (resp. exact Lagrangian) of $(J^1M\times\mathbb{S}_\theta^1,\alpha,d\theta)$, applying the map $g$ defined in the remark \ref{rem211} transforms $S$ into $g(S)$, a Lagrangian (resp. exact Lagrangian) of $(T^*(M\times\Sph^1_\theta),\lambda,d\theta)$.
\end{proof}

Note that this construction generalizes the construction \ref{const1}. Indeed, in the first construction, the Legendrian $L$ lifts to an exact Lagrangian $L\times\mathbb{R}$ in $(J^1M\times\R_t,\alpha:=dz-\lambda_M,dt)$. Taking $f=0$ a primitive of the pullback of $\alpha$ on $L$, we have that the pullback of $\alpha$ on $L\times\R$ is $d_{dt}e^{t}f=0$, thus satisfying the conditions in the construction above for $L\times\R$ to descend to an exact Lagrangian in $(J^1M\times\Sph_\theta^1,\alpha,d\theta)$. This allows us to prove the first part of theorem \ref{thm1}.

\begin{Lem}\label{eulerchar}
There are two connected closed manifold $M$ ($dim(M)=n\neq2$) and $L$  ($dim(L)=n+1$), and an embedding \[i:L\rightarrow (T^*(M\times\Sph_\theta^1),\lambda,d\theta)\] such that $i(L)$ is a $d\theta$-exact Lagrangians and $\chi(L)\neq\chi(M)$.
\end{Lem}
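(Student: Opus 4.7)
The plan is to apply Construction \ref{ExeCob} to a suitably nontrivial Lagrangian cobordism, after fixing $n=1$ and $M=\Sph^1$, so the ambient space is $T^*(\Sph^1\times\Sph^1_\theta)\cong T^*\T^2$ with its standard $d\theta$-exact $\lcs$ structure and the candidate $L$ will be a closed surface. Construction \ref{const1} alone cannot suffice, since it only produces Lagrangians of the form $\Lambda\times\Sph^1$, which automatically have vanishing Euler characteristic. The whole point of Construction \ref{ExeCob} is that the cobordism itself can carry topology, and this is where the non-trivial example will come from.

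First, I pick a Legendrian knot $L_0\subset(J^1\Sph^1,\,dz-\lambda_{\Sph^1})$ for instance, the zero section or a small Legendrian unknot with a Reeb chord available for surgery. Second, I construct an exact Lagrangian cobordism $\Lambda\subset (J^1\Sph^1\times\R,\,e^{-t}\alpha)$ from $L_0$ back to $L_0$ whose underlying surface is a twice-punctured torus, so that $\chi(\Lambda)=-2$. Concretely, I start with the trivial cylinder $L_0\times\R$ and, inside a compact middle slab, insert a pair of Lagrangian saddle (pair-of-pants) cobordisms: one performing a Legendrian 1-handle attachment on an available Reeb chord, the second undoing it, so that the net effect between $L_0$ and $L_0$ is the addition of a single topological handle. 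Such a cobordism can be realized explicitly in the front projection; it is embedded and exact by standard constructions of Lagrangian surgery cobordisms between Legendrians, and by construction it has trivial cylindrical ends, so condition (1) of Construction \ref{ExeCob} is automatic.

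Third, I invoke the Proposition immediately following Construction \ref{ExeCob} to Hamiltonian-isotope $\Lambda$ through fiber-preserving maps so that condition (2) holds as well. Construction \ref{ExeCob} then produces a closed $d\theta$-exact Lagrangian $\tilde\Lambda\subset(T^*\T^2,\lambda,d\theta)$. Finally, I compare Euler characteristics: since $\tilde\Lambda$ is obtained from $\Lambda$ by identifying its two boundary copies of $L_0\cong\Sph^1$ along the identity, a CW-complex count (or, equivalently, recognizing $\tilde\Lambda$ as the closed genus-$2$ surface) gives
\[\chi(\tilde\Lambda)=\chi(\Lambda)-\chi(L_0)=-2-0=-2,\]
while $\chi(M)=\chi(\Sph^1)=0$. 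Hence $\chi(\tilde\Lambda)\neq\chi(M)$, as required.

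The main obstacle is step two: producing a genuinely \emph{embedded} and \emph{exact} Lagrangian cobordism with the prescribed non-cylindrical topology. Embeddedness and exactness across a pair of 1-handle surgeries requires some care with the relative heights in the front projection and with the primitives of $e^{-t}\alpha$ on either side of each saddle, but this is the content of now-standard Legendrian surgery constructions. Once such a $\Lambda$ is in hand, steps three and four are formal applications of Construction \ref{ExeCob} and elementary Euler characteristic bookkeeping.
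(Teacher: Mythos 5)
Your overall strategy is the paper's: feed Construction \ref{ExeCob} a topologically nontrivial exact Lagrangian cobordism from a Legendrian back to itself, use the Proposition after the construction to arrange condition (2), and glue. But your step two is where the argument breaks: the cobordism you propose does not exist. For an \emph{orientable} exact Lagrangian cobordism $\Sigma$ from $\Lambda_-$ to $\Lambda_+$ in a symplectization, Chantraine's theorem gives $tb(\Lambda_+)-tb(\Lambda_-)=-\chi(\Sigma)$; with $\Lambda_+=\Lambda_-$ the unknot this forces $\chi(\Sigma)=0$, so no orientable exact twice-punctured torus from the unknot to itself exists, and any orientable exact self-cobordism you could glue up would yield a closed Lagrangian with $\chi=0$, which is useless here. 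Geometrically, the failure is exactly in your phrase ``the second undoing it'': Lagrangian surgery cobordisms are directed, and reversing a $1$-handle attachment by an exact cobordism is obstructed --- this is not a matter of care with heights and primitives but a genuine non-existence. The paper escapes this by taking the cobordism from \cite{Capovilla2016} to be \emph{non-orientable} of genus at least $2$ from the unknot to itself; the orientable $tb$--$\chi$ constraint does not apply there, and the glued-up closed non-orientable surface has $\chi=2-g\neq 0$.

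A secondary gap: you only treat $n=1$, whereas the lemma (and Theorem \ref{thm1}) is meant to produce examples in every dimension $n\neq 2$. The paper gets the higher-dimensional cases by multiplying the unknot and the cobordism by a Legendrian sphere $\Sph^{n-1}$ (or by $\Sph^1\times\Sph^{n-2}$ when $n$ is even) and invoking K\"unneth to keep the Euler characteristic non-zero; you would need to add such a step, and in doing so you would still need the non-orientable input in the surface factor.
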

\begin{proof}
Assume first that $M$ is of dimension $1$. By \cite{Capovilla2016}, there is a non-orientable exact Lagrangian cobordism of genus at least $2$ from the unknot to itself. Viewing this cobordism inside of $J^1M\times\R$, it induces an exact Lagrangian of genus at least $3$ in $(T^*M\times\Sph^1_\theta,\lambda,d\theta)$.

In dimension $dim(M)=n\neq 2$, one can simply multiply the unknot (and the cobordism) with a Legendrian sphere of dimension $n-1$ if $n$ is odd, or with the product of a Legendrian unknot and a Legendrian sphere $\Sph^1\times \Sph^{n-2}$ if $n$ is even. A simple application of the K\"unneth theorem shows that the Euler characteristic is again non-zero.
\end{proof}

The second part of the theorem \ref{thm1} can be proven with the following proposition:

%
%

\begin{Pro}
Let $L$ be a Legendrian submanifold in a manifold $J^1M$ endowed with the canonical contact form. Then the lift $L\times\Sph^1$ given in the construction \ref{const1} has a generating function (in the $\lcs$ sense) if and only if $L$ has a generating function $F:M\times\R^k\rightarrow\R$ that is quadratic at infinity and such that $dF$ is transverse to $(T^*M)\times\R^k$.
\end{Pro}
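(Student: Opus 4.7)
The plan is to prove both directions by explicit construction, exploiting the $\mathbb{S}^1$-invariance of the lifted Lagrangian $j(\Lambda \times \mathbb{S}^1)$.

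For the direction $(\Leftarrow)$, I will define $\tilde{F} : (M \times \mathbb{S}^1) \times \mathbb{R}^k \to \mathbb{R}$ by $\tilde{F}(q, \theta, \xi) := F(q, \xi)$ and verify the three required properties. Quadraticity at infinity follows from that of $F$ together with compactness of $\mathbb{S}^1$. A direct computation yields $d_{d\theta} \tilde{F} = dF - F\, d\theta$, which lies in $(T^*(M \times \mathbb{S}^1)) \times \mathbb{R}^k$ exactly when $\partial_\xi F = 0$; moreover, since $\partial_\xi \tilde{F}$ is independent of $\theta$, the transversality of $d_{d\theta}\tilde{F}$ to $(T^*(M \times \mathbb{S}^1)) \times \mathbb{R}^k$ reduces to the transversality of $dF$ to $(T^*M) \times \mathbb{R}^k$. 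On the critical set, the associated Lagrangian immersion takes the form $(q, \partial_q F(q, \xi), \theta, -F(q, \xi))$, which agrees with $(i_M(l), \theta, -f(l)) = j(l, \theta)$ once the primitive $f$ in Construction \ref{const1} is chosen compatibly with the $z$-coordinate read off from $F$.

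For the direction $(\Rightarrow)$, I will extract $F$ from $\tilde{F}$ by slicing at a generic $\theta_0 \in \mathbb{S}^1$. The critical set $V_{\tilde{F}} := \{\partial_\xi \tilde{F} = 0\}$ is a smooth submanifold of $(M \times \mathbb{S}^1) \times \mathbb{R}^k$ of dimension $\dim M + 1$ by the transversality hypothesis on $\tilde{F}$. Applying Sard's theorem to the projection $\pi_{\mathbb{S}^1} : V_{\tilde{F}} \to \mathbb{S}^1$, I will pick $\theta_0$ to be a regular value and set $F(q, \xi) := \tilde{F}(q, \theta_0, \xi)$. A short tangent-space computation shows that the regularity of $\theta_0$ is equivalent to the transverse intersection of $V_{\tilde{F}}$ with the hypersurface $\{\theta = \theta_0\}$ in $(M \times \mathbb{S}^1) \times \mathbb{R}^k$, which is in turn equivalent to $0$ being a regular value of $\partial_\xi F : M \times \mathbb{R}^k \to \mathbb{R}^k$, hence to transversality of $dF$ to $(T^*M) \times \mathbb{R}^k$. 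At each critical point $(q, \xi)$, the associated Lagrangian of $\tilde{F}$ at $(q, \theta_0, \xi)$ is $(q, \partial_q F(q, \xi), \theta_0, -F(q, \xi))$, and its membership in $j(\Lambda \times \mathbb{S}^1)$ yields $(q, \partial_q F(q, \xi)) = i_M(l)$ and $F(q, \xi) = f(l)$ for the corresponding $l \in \Lambda$, showing that $F$ generates $L$; quadraticity at infinity of $F$ is inherited from $\tilde{F}$.

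The main obstacle is the transversality step in the $(\Rightarrow)$ direction: a priori, the partial derivative $\partial_\theta \partial_\xi \tilde{F}$ might contribute essentially to the surjectivity of $D(\partial_\xi \tilde{F})$, so a naive restriction to $\theta = \theta_0$ need not inherit transversality. The Sard-based choice of $\theta_0$ is exactly what resolves this: at a regular value of $\pi_{\mathbb{S}^1}$, the tangent direction of $V_{\tilde{F}}$ along $\mathbb{S}^1$ is redundant, so the remaining partial derivatives of $\partial_\xi \tilde{F}$ already span $\mathbb{R}^k$ and the slice $F$ inherits the required transversality.
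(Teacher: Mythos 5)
Your $(\Leftarrow)$ direction coincides with the paper's: setting $\tilde F(q,\theta,\xi)=F(q,\xi)$ works precisely because $\partial_\theta\tilde F=0$ by construction. The genuine gap is in the $(\Rightarrow)$ direction. A generating function in the $\lcs$ sense produces its Lagrangian through $d_{d\theta}\tilde F=d\tilde F-\tilde F\,d\theta$, so the $T^*\Sph^1$-component of a generated point is $\partial_\theta\tilde F-\tilde F$, not $-\tilde F$. When you assert that the Lagrangian at a critical point $(q,\theta_0,\xi)$ is $(q,\partial_qF(q,\xi),\theta_0,-F(q,\xi))$ and conclude $F(q,\xi)=f(l)$, you have silently assumed $\partial_\theta\tilde F=0$ on the critical locus. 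Nothing in the hypotheses makes $\tilde F$ independent of $\theta$; without that vanishing, the slice $F=\tilde F(\cdot,\theta_0,\cdot)$ generates the correct Lagrangian projection in $T^*M$ but assigns the wrong $z$-coordinate ($F$ instead of $F-\partial_\theta\tilde F$), hence possibly a different Legendrian than $L$. Establishing $\partial_\theta\tilde F=0$ on the critical set is in fact the heart of the paper's proof of this direction: it studies the level sets $S_z$ of $(\partial_\theta\tilde F-\tilde F)$ restricted to the critical locus, shows that along directions where $d\theta$ pulls back nontrivially one has $\partial_y(\partial_\theta\tilde F\circ i)=\partial_\theta\tilde F\circ i$ and hence exponential growth $h\,e^{x_1}$, then uses periodicity in $\theta$ (going once around $\Sph^1$ forces $\partial_\theta\tilde F=e^{c}\partial_\theta\tilde F$ with $c\geq 2\pi$, hence $\partial_\theta\tilde F=0$), and separately rules out open subsets of the critical set on which $d\theta$ pulls back to zero. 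A toy case already shows the phenomenon: $\tilde F=F+c(\theta)$ generates a $\theta$-independent $z$-coordinate iff $c'-c$ is constant, i.e. $c=ae^{\theta}+b$, and only the periodicity of $\theta$ kills $a$.

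Your Sard-based choice of $\theta_0$ is a correct way to guarantee that the slice inherits transversality (regularity of $\theta_0$ for the projection of the critical manifold to $\Sph^1$ is indeed equivalent to $0$ being a regular value of $\partial_\xi F$), but this addresses a different and easier issue than the one that carries the weight of the forward implication.
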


\begin{proof}
	\textit{If $L$ has such a generating function $F$.}
	
	Then the map $G:(q,\theta,\xi)\mapsto F(q,\xi)$ is a generating function for $L\times\mathbb{S}^1$.
	Indeed, \[L=\{(q,D_MF_{|(q,\xi)},F(q,\xi)):\xi\in\R^k\textit{ and } D_{\R^k}F_{|(q,\xi)}=0\}\]
	And therefore \[L\times\Sph^1=\{(q,\theta,D_MF_{|(q,\xi)},-F(q,\xi)):\xi\in\R^k\textit{ and } D_{\R^k}F_{|(q,\xi)}=0\}\]
	
	\textit{If $L\times\mathbb{S}^1$ has a generating function $G$ (in the  $\lcs$ sense).}
	
	Then recall that this means that \[L\times\Sph^1=\{(q,\theta,D_{M\times\Sph^1}G_{|(q,\theta,\xi)}-G(q,\theta,\xi)d\theta):\xi\in\R^k\textit{ and } D_{\R^k}G_{|(q,\theta,\xi)}=0\}\]
	
	More specifically, $L\times\Sph^1$ is the set of points $(q,p,\theta,z)\in T^*M\times T^*\mathbb{S}^1$ such that there is a $\xi\in\mathbb{R}^k$ such that $D_{M\times\Sph^1}G_{|(q,\theta,\xi)}-Gd\theta=p+zd\theta$ and $D_{\R^k} G_{|(q,\theta,\xi)}=0$.
	
	Set $S_G=\{(q,\theta,\xi)\; :\; D_{\R^k} G_{|(q,\theta,\xi)}=0\}$. Note that if $V_G$ is the intersection between the graph of $d_\beta G$ and $(T^*M\times\Sph^1)\times\R^k$, then $S_G$ is the projection of $V_G$ on $M\times\Sph^1\times\R^k$,  which is a submanifold since $d_\beta G$ intersects $(T^*M\times\Sph^1)\times\R^k$. Since $S_G$ and $V_G$ are isotopic (and more specifically, Hamiltonian isotopic in the $\lcs$ sense), it follows $S_G$ is also a submanifold of $M\times\R^k$.\newline
	
	Keeping in mind that one has to multiply by $-1$ the $z$ coordinate to go from $L$ to $L\times\Sph^1$, consider the point $(q,p,-z)\in (J^1 M)\cap L$ that is lifted to $(q,p,\theta,z)$ in $L\times\mathbb{S}^1$. Define $L_{z}$ the set of points $(q,\theta,\xi)$ such that there is some $p\in T^*(M\times\Sph^1)$ such that  $D_{M\times\Sph^1}G_{|(q,\theta,\xi)}-Gd\theta=p+zd\theta$. Note that $z=\partial_\theta G_{|(q,\theta,\xi)}-G{(q,\theta,\xi)}$ whenever $(q,\theta,\xi)\in L_{z}$ and therefore $L_z$ is the level-set of $z$ for the map $\partial_\theta G-G$. Define $S_z=L_z\cap S_G$ and assume that $z$ is a regular value of $(\partial_\theta G-G)_{|S_G}$. Let $i$ be the inclusion of $S_z$ in $M\times \mathbb{S}^1\times\mathbb{R}^k$, we have that $d(i^*\partial_\theta G)=i^*d\partial_\theta G=i^*dG$.
	
	 Around the points of $S_z$ over which the pullback of $d\theta$ is non-zero, we can define $dy=i^*d\theta$ and $\partial_y$ its dual for some metric. Around those points $\partial_y(\partial_\theta G\circ i)=\partial_\theta G\circ i$ and therefore, choosing a set of local coordinates $(x_1,\ldots,x_{n})$ on $S_z$ ($n$ being the dimension of $M$) such that $\partial_y=\partial_{x_1}$, we have $\partial_\theta G\circ i = h(x_2,\ldots,x_{n})e^{x_1}$ for some smooth map $h:\mathbb{R}^{n-2}\rightarrow\mathbb{R}$. We will now show that, as long as $z$ is a regular value of $(\partial_\theta G-G)_{|S_z}$, the set of points over which the pullback of $d\theta$ is zero has empty interior.\newline
	
	  Assume that the pullback of $d\theta$ to $S_z$ is $0$ over some connected $U\subset S_z$ that is open as a subset of $S_z$. There is some $\theta_0\in\mathbb{S}^1$ such that for any $x\in U$, $T_{x_0}U\subset T(
	  M\times \{\theta_0\}\times \mathbb{R}^k)$. 
	  
	  First case: there is some $x_0\in U$ such that $T_{x_0}U\not\subset (T
	  M)\times \{\theta_0\}\times\mathbb{R}^k$. However, since $S_z\subset S_G$, this would contradict the fact that $D_{\mathbb{R}^k}G$ intersects $(T^*M\times\mathbb{S}^1)\times\mathbb{R}^k$ transversely.
	 
	  Second case: $\forall x\in U, T_xU\subset (T
	  M)\times \{\theta_0\}\times\mathbb{R}^k$. Therefore, the first item implies that the projection of $U$ on $M\times\mathbb{S}^1$ is a $n$-dimensional open submanifold on which the pullback of $d\theta$ is $0$. This yields that the projection of $U$ is a $n$-dimensional submanifold $V\subset M\times\{\theta_0\}$ for some $\theta_0\in\mathbb{S}^1$. Since $G$ is the generating function for some $d\theta$-exact Lagrangian $L\times\Sph^1$, this means that the projection of $S_z$ on $M\times\mathbb{S}^1$ contains $V\times\Sph^1$. However, this would mean that the projection of $S_z$ is of dimension at least $n+1$, which contradicts the fact that $z$ is a non-critical value of $(\partial_\theta G-G)_{|S_G}$. We can therefore conclude that there is no such open set $U$.\newline
	
	Putting back the pieces together, we get that, given a regular value $z$ and a point $(q,\theta,\xi)\in S_z$, 
	we have $\partial_\theta G(q,\theta,\xi)=\partial_\theta G(q,\theta,\xi)e^c$ for some $c\geq2\pi$ which implies that $\partial_\theta G=0$. Let us restrict $G$ to a neighborhood of $S_G$ and assume that $\partial_\theta G-G$ is non-constant on $S_G$. Since the set of non-critical $z$ is comeagre (the complement of a countable union of closed subsets with empty interiors), we can conclude that $\partial_\theta G=0$ over $S_G$ and in particular, $z=-G$. 
	
	If $\partial_\theta G-G$ is constant equal to $z_0$ on $S_G$, then all the previous arguments carry through, although here the argument for the non-existence of the open set $U$ should be slightly modified: since $U$ is of dimension $n+1$, the second case contemplated cannot occur. Therefore, we can also conclude in this case that $\partial_\theta G=0$ over points in $S_G$ and in particular, $z=-G$.
	
	Taking $F:M\times\R\times\R^k\rightarrow\R$ to be the lift of $G:M\times\Sph^1\times\R^k\rightarrow\R$, we can conclude that
	\begin{align*}
	L=\{(q,D_MF_{|(q,s,\xi)},F(q,s,\xi)):&\textit{ for all } s\in\R\textit{ and some }\xi\in\R^k\\
	&\textit{ such that } D_{\R\times\R^k}F_{|(q,s,\xi)}=0\}\subset J^1M
	\end{align*}
	
	Therefore, the restriction $F_{|s_0}:M\times\{s_0\}\times\R^k\rightarrow\R$ is a generating function of $L$.
\end{proof}

The two previous lemmas together show theorem \ref{thm1}.

\begin{proof}[proof of theorem \ref{thm1}]
Since stabilization of Legendrians do not admit a generating function, the previous lemma shows the second part of theorem \ref{thm1}, whereas the penultimate lemma shows the first part.
\end{proof}

\section{On the topology of the Lee class}\label{topology}

As shown in the previous section, no naive adaptation of Abouzaid-Kragh's theorem exists and some exact Lagrangians do not have any generating function and thus fall outside the scope of Chantraine-Murphy's theorem. However, this is does not imply that the topology of the $0$-section has no impact on the topology of the exact Lagrangians, as shown with theorem \ref{thm2}. To prove this theorem, we will first prove a lemma that can be found in the literature (e.g. \cite{Farber2004TopologyOC}), but for which the author could not locate a complete proof.

\begin{Lem} [\cite{Farber2004TopologyOC}]
	Let $M$ be a manifold such that $H^1_{dR}(M,\R)$ is a finite-dimensional vector space. Take $\beta\in\Omega^1(M)$ closed. Then we can write:
	\[[\beta]  =\Sigma_{i=1}^r b_i[\beta_i]\] with $[\beta_i]\in H^1(T^*M,\mathbb{Z})$ linearly independent, and $b_i\in\mathbb{R}$ linearly independent over $\mathbb{Z}$.
\end{Lem}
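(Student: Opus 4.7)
The plan is to reduce the statement to a finitely generated abelian group computation after identifying $H^1(M,\mathbb{Z})$ with a full rank lattice in $H^1_{dR}(M,\mathbb{R})$.

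Let $n=\dim H^1_{dR}(M,\mathbb{R})$, which is finite by assumption. The de Rham isomorphism together with the universal coefficient theorem shows that the image $\Lambda$ of the natural map $H^1(M,\mathbb{Z})\to H^1(M,\mathbb{R})\cong H^1_{dR}(M,\mathbb{R})$ is a free abelian group of rank $n$ which spans $H^1_{dR}(M,\mathbb{R})$ as an $\mathbb{R}$-vector space (its kernel being the torsion subgroup). Pick a $\mathbb{Z}$-basis $e_1,\dots,e_n$ of $\Lambda$; it is then also an $\mathbb{R}$-basis of $H^1_{dR}(M,\mathbb{R})$, so I may write $[\beta]=\sum_{i=1}^n c_i e_i$ with uniquely determined $c_i\in\mathbb{R}$.

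Next I would consider the additive subgroup $G=\langle c_1,\dots,c_n\rangle_\mathbb{Z}\subset\mathbb{R}$. Being a finitely generated torsion-free abelian group, $G$ is free of some rank $r\leq n$, and I choose a $\mathbb{Z}$-basis $b_1,\dots,b_r$ of $G$. By construction the $b_j$'s are $\mathbb{Z}$-linearly independent. Writing $c_i=\sum_{j=1}^r a_{ij} b_j$ with $a_{ij}\in\mathbb{Z}$ and setting $[\beta_j]:=\sum_{i=1}^n a_{ij} e_i\in\Lambda$, I obtain the desired decomposition
\[
[\beta]=\sum_{i=1}^n c_i e_i=\sum_{j=1}^r b_j\Bigl(\sum_{i=1}^n a_{ij} e_i\Bigr)=\sum_{j=1}^r b_j[\beta_j],
\]
with each $[\beta_j]$ integral as an integer combination of elements of $\Lambda$.

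The main point I expect to be slightly delicate is the linear independence of the $[\beta_j]$'s. For this I would use the fact that, since the $b_j$'s form a basis of the group $G$ generated by the $c_i$'s, there exist integers $m_{ji}$ with $b_j=\sum_i m_{ji}c_i$. Substituting the expression of $c_i$ in terms of the $b_k$'s and using the $\mathbb{Z}$-linear independence of $b_1,\dots,b_r$ gives the matrix identity $MA=I_r$, where $A=(a_{ij})$ and $M=(m_{ji})$. Hence $A$ has a left inverse, so its $r$ columns are $\mathbb{Q}$-linearly independent vectors in $\mathbb{Z}^n$, and therefore $\mathbb{R}$-linearly independent. Since the $e_i$ form an $\mathbb{R}$-basis of $H^1_{dR}(M,\mathbb{R})$, this translates into $\mathbb{R}$-linear independence of the classes $[\beta_j]$, concluding the argument.
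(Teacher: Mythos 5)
Your proposal is correct, and it reaches the same decomposition by a noticeably cleaner route than the paper. The paper works on the homology side: it defines $r$ as the rank of the coimage of the period homomorphism $\langle[\beta],\cdot\rangle:H_1(M,\mathbb{Z})\to\mathbb{R}$, picks cycles $[\alpha_i]$ generating that coimage, defines the $[\beta_i]$ as dual classes via the universal coefficient theorem, runs Gram--Schmidt to orthonormalize them, and then sets $b_i=\langle[\beta],[\alpha_i]\rangle/\langle[\beta_i],[\alpha_i]\rangle$; the $\mathbb{Z}$-independence of the $b_i$ and the freeness of the $[\beta_i]$ are then checked through the pairing. You instead work entirely inside $H^1$: you use the lattice $\Lambda=\mathrm{im}\bigl(H^1(M,\mathbb{Z})\to H^1_{dR}(M,\mathbb{R})\bigr)$, take the subgroup $G\subset\mathbb{R}$ generated by the coordinates of $[\beta]$ in an integral basis, and let $r=\mathrm{rk}\,G$ --- which is the same number as the paper's $r$, since $G$ is exactly the group of periods of $\beta$ --- and your matrix identity $MA=I_r$ gives the independence of the $[\beta_j]$ in one line, avoiding both the duality bracket and Gram--Schmidt. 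The one point to flag is that your claim that $\Lambda$ is a full-rank lattice spanning $H^1_{dR}(M,\mathbb{R})$ requires $H_1(M,\mathbb{Z})$ (or $H^1(M,\mathbb{Z})$) to be finitely generated; the hypothesis ``$H^1_{dR}(M,\mathbb{R})$ finite-dimensional'' alone does not guarantee this (e.g.\ if $H_1(M,\mathbb{Z})\cong\mathbb{Q}$ the integral lattice is trivial and the statement itself fails). The paper's proof silently makes the same assumption, and it is harmless in context since the lemma is only applied to closed manifolds, but you should state it.
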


\begin{proof} Let $r=rank_\mathbb{Z}(coim(<[\beta],\cdot>_{|H_1(M,\mathbb{Z})}))$ where $<\cdot,\cdot>_{|H_1(M,\mathbb{Z})}$ is the restriction to $H^1(M,\mathbb{R})\otimes H_{1}(M,\mathbb{Z})$ of \[<\cdot,\cdot>:H^k(M,\mathbb{R})\otimes H_{k}(M,\mathbb{R})\mapsto\mathbb{R},\] the homology-cohomology duality bracket.
	
	Therefore, in $H_{1}(M,\mathbb{Z})$, there is a free family $([\alpha_1],\ldots,[\alpha_r])$ of elements whose projection on the coimage is a basis for the maximal free module of the coimage.  Note that $([\alpha_1],\ldots,[\alpha_r])$ is also a free family in $H_{1}(M,\mathbb{Q})$. Indeed, if we had a family $\lambda_i\in\mathbb{Q}$ such that $\sum_i\lambda_i[\alpha_i]=0$, then we could multiply all the $\lambda_i$ by the product of the denominators of the $\lambda_i$, which would yield a family $a_i\in\mathbb{Z}$ such that $\sum_ia_i[\alpha_i]=0$.
	For each $i=1,\ldots,r$, define $[\beta_i]$ as the dual of $[\alpha_i]$ for the identification 
	$H^1(M,\mathbb{Z})=$ $Hom(H_1(M,\mathbb{Z}),\mathbb{Z})$ given by the universal coefficient theorem. This same identification allows us to view $<\cdot,\cdot>$ as a scalar product on $H^1(M,\mathbb{Q})$ and, therefore,up to using the Gram-Schmidt process, we can assume that the family $(\beta_i)_i$ is an orthonormal basis in $H^1(M,\mathbb{Q})$. Up to renormalization of the basis  $(\beta_i)_i$, we can assume that it is an orthonormal basis of $H^1(M,\mathbb{Z})$.
	
	Finally, let $b_i=\frac{<[\beta],[\alpha_i]>}{<[\beta_i],[\alpha_i]>}$ for each $i$.\newline
	
	Let us show that those quantities are linearly independent.
	
	Assume that there are some $a_i\in\mathbb{Z}$ such that $\sum a_ib_i=0$. Then $\sum b_i <[\beta_i],\alpha>=0$ for $\alpha=\sum a_i(\Pi_{j\neq i}<[\beta_j],\alpha_j>)\alpha_i$. This implies that  $\alpha\in ker <[\beta],\cdot>$ and therefor $<[\beta_i],\alpha>=0$ for every $i$. The definition of $\alpha$, implies that his can only happen when $a_i=0$ every $i$. Therefore  the $b_i$ are indeed linearly independent over  $\mathbb{Z}$.
	
	Finally, the family $([\beta_i])_i$ is free because it is orthogonal in $H^1(M,\mathbb{Q})$ and therefore in $H^1(M,\mathbb{R})$.
\end{proof}

This lemma will allow us to reduce the proof of theorem \ref{thm2} to the case $[\beta]\in H^1(M,\mathbb{Z})$.

\begin{proof}[proof of theorem \ref{thm2}] By contradiction. Assume that $[\beta]=0\in H^1(L,\R)$ and $[\beta]\neq0\in H^1(M,\R)$. \newline
	
	First, assume that  $[\beta]\in H^1(M,\mathbb{Z})$.
	
	Let $\tilde{M}_\beta$ be the integral cover of $\beta$  and $p_{\tilde{M}_\delta}: \tilde{M}_\delta\rightarrow M$ the projection. Since $i^*[\beta]=0$, there is an embedding $j:L\rightarrow T^*C$ such that $i=Dp_{\tilde{M}_\delta} \circ j$.
	
	Let $b:T^*\tilde{M}_\delta\rightarrow\mathbb{R}$ be a primitive of $\beta$. Since $L$ is compact,   there are two regular values $x,y\in\mathbb{R}$, two hypersurfaces $S_1=b^{-1}(x)$ and $S_2=b^{-1}(y)$ and a covering space automorphism $h$ such that $h(S_1)=S_2$ and $x<\inf(b(L))<\sup(b(L))<y$.
	
	We can therefore glue the boundaries of $b^{-1}([x,y])$ to get $T^*\tilde{M}'$ a finite cover of $T^*M$ where $\tilde{M}'$ is itself a finite cove of $M$. We can then find a new embedding $j':L\rightarrow T^*M$ such that the composition with the projection $Dp_{\tilde{M}'}:T^*\tilde{M}'\rightarrow T^*M$ yields $i$.
	
	By hypothesis, the pullback of $\beta$ to $L$ is equal to $dg$ for some smooth $g$. Given two points $(q,p_1),(q,p_2)\in T_q^*\tilde{M}'\cap j'(L)$, and given a path $\gamma$ in $L$ between those two points, we have that 
	\begin{align*}
g(q,p_1)-g(q,p_2)&=\int_\gamma(Dp_{\tilde{M}'}\circ j')^*\beta\\&=\int_{Dp_{\tilde{M}'}\circ j'\circ\gamma}\beta=0.
	\end{align*}
	 Indeed, the loop $Dp_{\tilde{M}'}\circ j'\circ\gamma$ does not cross the hypersurface in $T^*\tilde{M}'$ corresponding to the boundaries of $b^{-1}([x,y])$. This loop can therefore be lifted to a loop $T^*\tilde{M}_\beta$, and the integral of $b$ along a loop is always $0$. 
	 
	Therefore, there is a function $G\in C^\infty(M)$ such that $G\circ\pi\circ i=g$.
	We can therefore define a diffeomorphism:
	\begin{align*}
		s:T^*\tilde{M}'&\rightarrow T^*\tilde{M}'\\
		(q,p)&\mapsto (q,e^{-G(q)}\times p)
	\end{align*}
	
	We can then conclude this case by pointing out that, if the pullback of the canonical Liouville form of $T^*M$ on $L$ is $d_\beta f$, then by construction $(s\circ j')^*\lambda=d(e^{-g}f)$. This means that $s\circ j'(L)$ is a $0$-exact Lagrangian of $T^*\tilde{M}'$ but that the projection on $\tilde{M}'$ is not a homotopy equivalence (by hypothesis). Absurd (this contradicts the Abouzaid-Kragh theorem).\newline
	
	Let us now do the general case where $[\beta]\in H^1(M,\mathbb{R})$. Let us write $[\beta]  =\Sigma_{i=1}^r b_i[\beta_i]$ as in the previous lemma.
	 Note that , $i^*[\beta]=0$ if and only if for every $i$, $ i^*[\beta_i]=0$. therefore, by repeating the above construction for each $\beta_i$, we can conclude that, there is some $T^*\tilde{M}'$ a finite cover of $T^*M$ in which a $0$-exact Lagrangian submanifold is not homotopically equivalent to $\tilde{M}'$ through the projection, which is absurd.
\end{proof}

The theorem \ref{thm2} has a few consequences for $\lcs$ geometry, when coupled with Abouzaid-Kragh's theorem:
\begin{Cor}
Under the same conditions as theorem \ref{thm2}, the map $H_1(L)\rightarrow H_1(M)$ induced by the projection is never trivial if $M$ has non-zero first Betti number.
\end{Cor}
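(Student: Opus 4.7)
The plan is to split the argument into two cases according to whether the Lee class $[\beta]\in H^1(M,\R)$ vanishes, using Abouzaid-Kragh in one case and theorem \ref{thm2} together with a duality argument in the other.

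First, suppose $[\beta]=0$. Since $\beta$ is closed on $M$, there exists $g\in C^\infty(M)$ with $\beta=dg$, and the exact $\lcs$ pair $(\lambda,\beta)$ is equivalent to $(e^{-g}\lambda,0)$, which is a genuine exact symplectic structure on $T^*M$. The fiberwise rescaling $\phi(q,p)=(q,e^{-g(q)}p)$ is a (global) diffeomorphism of $T^*M$ satisfying $\phi^*\lambda=e^{-g}\lambda$; this is a special case of lemma \ref{dening}, whose decay hypothesis is trivially satisfied as $g$ depends only on $q$ and $M$ is compact. Thus $\phi^{-1}\circ i:L\to(T^*M,\lambda)$ is a $0$-exact Lagrangian embedding into the standard cotangent bundle, and Abouzaid-Kragh's theorem forces $\pi\circ\phi^{-1}\circ i$ to be a homotopy equivalence. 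Because $\phi$ preserves fibers, $\pi\circ\phi^{-1}=\pi$, so $\pi\circ i$ is already a homotopy equivalence, whence $\pi_*:H_1(L)\to H_1(M)$ is an isomorphism, and in particular non-trivial since $H_1(M;\R)\neq 0$.

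Next, suppose $[\beta]\neq 0\in H^1(M,\R)$. Theorem \ref{thm2} then yields $[\beta]\neq 0\in H^1(L,\R)$. Since by convention the form denoted $\beta$ on $T^*M$ is $\pi^*\beta$, this means the pullback $(\pi\circ i)^*:H^1(M,\R)\to H^1(L,\R)$ sends the class $[\beta]$ to a non-zero class, so the map is non-trivial. Under the de Rham identification $H^1(X,\R)\cong\mathrm{Hom}(H_1(X,\Z),\R)$, this pullback is dual to $(\pi\circ i)_*:H_1(L,\Z)\to H_1(M,\Z)$ after tensoring with $\R$. A non-trivial dual forces the underlying integral map to be non-trivial.

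The only mild obstacle is checking the first case: one must verify that the $\lcs$ equivalence $(\lambda,dg)\sim(e^{-g}\lambda,0)$ genuinely reduces the problem to a standard symplectic one, and that the conclusion of Abouzaid-Kragh descends from $\phi^{-1}\circ i$ to $i$ itself; both facts rest on the observation that $\phi$ is fiber-preserving. The second case is then immediate from theorem \ref{thm2} and elementary duality, and the hypothesis $b_1(M)>0$ is used only to guarantee that the isomorphism in the first case is not between trivial groups.
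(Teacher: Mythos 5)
Your argument is correct and is essentially the intended one: the paper states this corollary without a written proof, as a direct consequence of theorem \ref{thm2} coupled with Abouzaid--Kragh, and your case split ($[\beta]=0$: untwist to the standard exact symplectic structure and apply Abouzaid--Kragh; $[\beta]\neq 0$: theorem \ref{thm2} plus the duality between $(\pi\circ i)^*$ on $H^1(\cdot,\R)$ and $(\pi\circ i)_*$ on $H_1$) is the natural way to make that explicit. One small slip in the first case: since $\phi^*\lambda=e^{-g}\lambda$, the $0$-exact Lagrangian embedding into the standard $(T^*M,\lambda)$ is $\phi\circ i$ (one checks $(\phi\circ i)^*\lambda=d(e^{-g}f)$, whereas $(\phi^{-1}\circ i)^*\lambda=e^{g}(df-f\,dg)$ is not closed in general), not $\phi^{-1}\circ i$ --- this does not affect your conclusion, since $\phi$ is fiber-preserving either way and so $\pi\circ\phi\circ i=\pi\circ i$.
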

A consequence of which is:

\begin{Cor}
Under the same conditions as theorem \ref{thm2}, a closed exact Lagrangian in a cotangent bundle is never contractible.
\end{Cor}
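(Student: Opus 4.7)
The plan is to argue by contradiction. Suppose $L$ is contractible. Then $H^1(L,\R)=0$, so in particular the pullback $i^*[\beta]=0$ in $H^1(L,\R)$. By the contrapositive of Theorem \ref{thm2}, this forces $[\beta]=0$ in $H^1(M,\R)$, and hence $\beta=dg$ for some $g\in C^\infty(M)$ (which I also view as a function on $T^*M$ via pullback through $\pi$).

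The next step is to reduce the problem to an ordinary symplectic one. The exact $\lcs$ pair $(\lambda,dg)$ is equivalent in $ELCS(T^*M)$ to $(e^{-g}\lambda,0)$: take $f_0=0$ and $g_0=-g$ in the defining equivalence relation. Concretely, using the identity $d_{dg}\alpha = e^{g}d(e^{-g}\alpha)$, the relation $i^*\lambda = d_\beta f$ becomes $i^*(e^{-g}\lambda)=d\bigl(e^{-(g\circ\pi\circ i)}\,f\bigr)$, so $i$ presents $L$ as a $0$-exact Lagrangian in $(T^*M,e^{-g}\lambda)$. I then invoke Lemma \ref{dening} to produce a fiber-preserving diffeomorphism $\phi\colon T^*M\to T^*M$ with $\phi^*\lambda = e^{-g}\lambda$; the asymptotic hypothesis on $g$ in that lemma is automatic here, because $g$ is pulled back from the compact base $M$ and is therefore bounded, so the $o_{+\infty}(\ln t)$ growth condition holds trivially.

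Now $\phi\circ i\colon L\to (T^*M,\lambda)$ is a closed $0$-exact Lagrangian embedding in the standard Liouville cotangent bundle. By the Abouzaid--Kragh theorem (\cite{AbouzaidKragh2018SHENL}), the map $\pi\circ\phi\circ i = \pi\circ i\colon L\to M$ is a (simple) homotopy equivalence, and hence $L\simeq M$. But $M$ is a closed connected manifold of positive dimension $n$, so $H_n(M;\Z/2)\neq 0$ and $M$ is not contractible, contradicting the assumption on $L$.

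The only delicate point is upgrading the $ELCS$-equivalence $(\lambda,dg)\sim(e^{-g}\lambda,0)$ to an honest diffeomorphism of $T^*M$ intertwining the two Liouville forms, so that the Abouzaid--Kragh theorem applies literally rather than only up to conformal rescaling of the symplectic form; this is precisely what Lemma \ref{dening} provides, and it applies cleanly because the conformal factor depends only on the base coordinates. Everything else is a direct appeal to Theorem \ref{thm2}, to Abouzaid--Kragh, and to the elementary fact that a closed connected manifold of positive dimension is never contractible.
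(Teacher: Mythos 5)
Your proof is correct and follows essentially the same route the paper intends: the corollary is stated as a consequence of Theorem \ref{thm2} ``coupled with Abouzaid-Kragh's theorem,'' and your case analysis (contractibility forces $i^*[\beta]=0$, hence $[\beta]=0$ by Theorem \ref{thm2}, hence reduction to the symplectic setting where Abouzaid--Kragh gives $L\simeq M$) is exactly that combination. Your explicit use of Lemma \ref{dening} to turn the conformal equivalence $(\lambda,dg)\sim(e^{-g}\lambda,0)$ into a genuine fiber-preserving diffeomorphism is a detail the paper leaves implicit, and it is handled correctly.
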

Moreover, the theorem also yields the following corollary:
\begin{Cor}\label{amusant}
Given $M$ and $L$ some connected closed manifolds, let $\beta\in\Omega^1(M)$ be closed and $i:L\rightarrow T^*M$ be a Lagrangian embedding such that  $i^*\lambda=d_\beta f$. If $d_\beta f= d_\beta g$ for some map $g\in C^\infty(L)$, Then $f=g$.
\end{Cor}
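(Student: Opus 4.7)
The plan is to introduce the difference $h := f-g \in C^\infty(L)$ and show it must vanish identically. Writing $\beta$ also for the pullback $i^*\beta$ on $L$, the hypothesis $d_\beta f = d_\beta g$ rephrases as the $d_\beta$-closedness of a $0$-form, namely
\[ dh = h \cdot \beta. \]
This is the equation one should exploit: it is, informally, an ODE along any curve on $L$, and this rigidity is ultimately what forces $h$ to be zero.

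The key step I would carry out first is to establish the following dichotomy: the zero set $Z := \{p \in L : h(p) = 0\}$ is clopen. Closedness is immediate from continuity, so the work is in openness. Around any point $p_0$, use the Poincaré lemma on a contractible neighborhood $U$ to write $\beta|_U = dF$ for some smooth $F : U \to \R$; then $d(e^{-F} h) = e^{-F}(dh - h\, dF) = 0$, so $h|_U = c\, e^F$ for some local constant $c$. Since $e^F > 0$, vanishing of $h$ at a single point of $U$ forces $c = 0$, hence $h \equiv 0$ on $U$. By connectedness of $L$, either $h \equiv 0$ (in which case we are done) or $h$ is nowhere zero.

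The main obstacle is ruling out the second alternative, and this is precisely where Theorem \ref{thm2} enters. If $h$ never vanishes, connectedness forces $h$ to have constant sign, so $|h|$ is a smooth positive function on $L$, and dividing the relation $dh = h \cdot \beta$ by $h$ yields
\[ d \log |h| = \beta \quad \text{on } L, \]
so that $[i^*\beta] = 0 \in H^1(L,\R)$. The contrapositive of Theorem \ref{thm2} then forces $[\beta] = 0 \in H^1(M,\R)$, which contradicts the non-triviality of $[\beta]$ in the setting of interest. This eliminates the nowhere-zero alternative, leaving $h \equiv 0$, i.e.\ $f = g$. The only delicate conceptual point is recognizing that the $d_\beta$-closedness condition on $0$-forms is really just the statement that $\log|h|$ is a primitive of $\beta$ (where defined), so that uniqueness of primitives in the $d_\beta$ sense is automatically a Lee-class statement, and hence accessible via Theorem \ref{thm2}.
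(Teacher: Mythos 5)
Your proposal is correct and takes essentially the same route as the paper: set $h=f-g$, observe that $d_\beta h=0$ gives $d\ln|h|=i^*\beta$ away from the zeros of $h$, and invoke Theorem \ref{thm2} (via the non-exactness of $i^*\beta$) to rule out the nowhere-vanishing alternative. Your clopen argument via the local Poincar\'e lemma is in fact a cleaner justification of the dichotomy than the paper's boundary-behaviour remark, but the proof is the same in substance (and, like the paper's, it implicitly uses that $[\beta]\neq 0$ in $H^1(M,\R)$, without which the statement fails).
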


\begin{proof}
Let $h\in C^\infty(L)$ be such that $d_{i^*\beta}h=dh-hi^*\beta=0$. Therefore, away from $h=0$, we have that $d\ln(|h|)=i^*\beta$. If $h$ is non-zero on some open set $V$, then $d\ln(|h|)$ goes to infinity when near to $\partial V$, and therefore, either $V=L$ (absurd since $i^*\beta$ cannot be exact by the theorem), either $V=\varnothing$. Therefore, $h$ is always zero.
\end{proof}

Finally, let us state a last corollary, about the dynamics of exact Lagrangians:

\begin{Cor}
	Let $M$ be a connected closed manifold, and $L$ be a $\beta$-exact Lagrangian that is a connected closed submanifold of $T^*M$ ($\beta$ the pullback of some closed non-exact $1$-form on $M$). If there is a map $H\in C^\infty(T^*M)$ such that $TL\subset\ker(d_\beta H)$, then $L\subset\{H=0\}$.
\end{Cor}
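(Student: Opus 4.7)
The plan is to reduce this to the argument already used in Corollary \ref{amusant}. First I would translate the hypothesis $TL\subset\ker(d_\beta H)$ into a differential equation on $L$. Since $d_\beta H=dH-H\beta$, for any $x\in L$ and $v\in T_xL$ the condition $(d_\beta H)_x(v)=0$ reads $dH_x(v)=H(x)\beta_x(v)$. Writing $h:=H\circ i\in C^\infty(L)$ and pulling back along the inclusion $i:L\hookrightarrow T^*M$, this is exactly
\[
 dh = h\cdot i^*\beta, \qquad\text{i.e.,}\qquad d_{i^*\beta}h=0.
\]
So the proof reduces to showing that this equation forces $h\equiv 0$.

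Next I would bring in Theorem \ref{thm2}: by hypothesis $\beta$ is the pullback of a closed non-exact $1$-form on $M$, so $[\beta]\neq 0$ in $H^1(M,\R)$, and therefore $[i^*\beta]\neq 0$ in $H^1(L,\R)$. In particular $i^*\beta$ is not exact on $L$. I would then run the argument of Corollary \ref{amusant} verbatim: let $V:=\{x\in L:h(x)\neq 0\}$. On $V$ we may divide the equation by $h$ to obtain $d\ln|h|=i^*\beta$. If $V=L$, then $i^*\beta$ would be exact, contradicting the previous step. If $V\subsetneq L$, connectedness of $L$ forces $\partial V\neq\varnothing$; along a smooth path $\gamma:[0,1]\to L$ with $\gamma(0)\in\partial V$ and $\gamma((0,1])\subset V$ we have
\[
 \ln|h(\gamma(t))|-\ln|h(\gamma(s))|=\int_s^t \gamma^*(i^*\beta),
\]
whose right-hand side stays bounded as $s\to 0^+$ by compactness of $L$ and smoothness of $\beta$, while the left-hand side diverges since $h(\gamma(s))\to 0$. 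Hence $V=\varnothing$, i.e.\ $h\equiv 0$, which is exactly $L\subset\{H=0\}$.

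The main obstacle is really only the conceptual step of recognizing that the pointwise condition $TL\subset\ker(d_\beta H)$ on a subbundle of $T(T^*M)$ repackages into the scalar equation $d_{i^*\beta}(H|_L)=0$; once that is done, the analytic content is identical to Corollary \ref{amusant} and the input from Theorem \ref{thm2} does the rest. I do not anticipate any extra subtleties coming from the fact that $H$ is defined on the ambient $T^*M$ rather than intrinsically on $L$, since only its restriction to $L$ enters the argument.
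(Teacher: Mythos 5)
Your proof is correct and follows essentially the same route as the paper: reduce the hypothesis $TL\subset\ker(d_\beta H)$ to the scalar equation $d_{i^*\beta}(H\circ i)=0$ on $L$, invoke Theorem \ref{thm2} to see that $i^*\beta$ is non-exact, and conclude that the function must vanish. The only difference is that the paper outsources this last vanishing step to Corollary 4 of \cite{Allais2024TDCHFDC}, whereas you make it self-contained by recycling the $d\ln|h|=i^*\beta$ blow-up argument from Corollary \ref{amusant}, which is a perfectly valid (and arguably preferable) substitution.
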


This corollary stems directly from putting together theorem \ref{thm2} of this paper and the corollary 4 in \cite{Allais2024TDCHFDC}.

\section{An extension theorem}\label{extension}

Let $M$ be a closed manifold, $\beta\in\Omega^1(M)$ be closed, and $\lambda$ be the canonical Liouville form on $T^*M$. Let $L$ be a $\beta$-exact submanifold of $(T^*M,\lambda,\beta)$ such that the pullback of $\lambda$ to $L$ is equal to $d_\beta f$ for some map $f\in C^\infty(L)$.\newline

Let us note that had the equation been $i^*\lambda=df-\beta$, then a simple translation by $\beta$ would have made $L$ a $0$-exact Lagrangian submanifold. Therefore, provided that $f>0$ and that it can be extended to some smooth map $F$ on $T^*M$ such that $\frac{\lambda}{F}$ is a Liouville form, we can translate $L$ to be a $0$-exact submanifold of $(T^*M,\frac{\lambda}{F})$. As we will later see, we would then be able to apply Moser's trick to find a isotopy between the translation of $L$ and a $0$-exact Lagrangian submanifold of $(T^*M,\lambda)$.

This encourages us to consider the slightly more general case where we have some $g:T^*M\rightarrow\mathbb{R}$ such that:
\begin{align*}
	1.\;& \frac{1}{g}\lambda\textrm{ is a Liouville form}\\
	2.\;&i^*\left(\frac{1}{g}\lambda\right)=dh-\eta\textrm{ for } h\in C^\infty(L)\textrm{ and } \eta\in\Omega^1(L)\textrm{ closed}
\end{align*}

\paragraph{\textbf{Analysis, first part}}\label{Ana1} A quick computation yields the following equality:
\begin{align}\label{analysisfirstpart}
d\left(\frac{1}{g}\lambda\right)^{\wedge n}=&\left(\frac{1}{g}\right)^n\omega^{\wedge n}+n\left(\left(d\frac{1}{g}\wedge\lambda\right)\wedge\left(\left(\frac{1}{g}\right)^{n-1}\omega^{\wedge n-1}\right)\right)
\end{align}

We can note here that considering $\frac{1}{g}\lambda$ instead of $G\lambda$, for some map $G\in C^\infty(M)$, does not lead to a loss of generality. Indeed, if $G(x)=0$, then $d(G\lambda)_x= (dG \wedge\lambda)_x + G(x)d\lambda_x = (dG \wedge\lambda)_x$. Moreover, we will assume, without loss of generality, that $g>0$.

The first equality of this analysis (equality \ref{analysisfirstpart}) implies that $d(\frac{1}{g}\lambda)$ is non-degenerate if and only if, in local coordinates, $(q,p)\in T^*_qM$:
\[\frac{n!}{g}\times\bigwedge_j dp_j\wedge dq_j\neq -n! \sum_i p_i\partial_{p_i}\left(\frac{1}{g}\right)\bigwedge_j dp_j\wedge dq_j\]
This condition can be summed up by the (in)equality:
\[1\neq g\times\frac{dg}{g^2}(Z_\lambda)=d\ln(g)(Z_\lambda)\]
where $Z_\lambda$ is the Liouville vector field of $\lambda$. Since $d\ln(g)(Z_\lambda)_{|(q,0)}=0$, we get the inequality:
\[d\ln(g)(Z_\lambda)<1\]

\paragraph{\textbf{Analysis, second part}}\label{Ana2} Given a map $h:L\rightarrow\mathbb{R}$ such that $\frac{df}{h}-\frac{f}{h}i^*\beta$, we want to understand the conditions under which this map can be extended to a map $g:T^*M\rightarrow\mathbb{R}$ fulfilling the conditions laid out in the first part. It is immediately apparent that the mean value theorem can obstruct such a map. The mean value theorem does not offer an obstruction if and only if (in local coordinates): 
\[\frac{ln(h(q,tp))-ln(h(q,p))}{ln(t)}<1\]
where $(q,p),(q,tp)\in T^*_qM\cap i(L)$, for some $t\in\mathbb{R}_+^*-\{1\}$. For $t=0$, this inequality is always true.

We will say that the MVT obstructs the extension if the inequality does not hold for at least a pair of such points. We will endeavor to show that it is the only obstruction.\newline\newline

Henceforth, we will assume that our exact Lagrangians are ``generic enough'' in the following sense:
\begin{Hyp}\label{HypLagHolo} Let $M$, $L$ and $i:L\rightarrow T^*M$ be as previously defined. We will assume that $i(L)$ fulfills the following properties:
	\begin{itemize}
		\item[1)] \textit{$i(L)$ is transverse to the $0$-section $M$.} Call $\{q_1,\ldots,q_k\}$ the intersection points.
		\item[2)] \textit{The projections of $Di(TL)$ and $VT^*M$ in $\mathbb{P}T^*M$ only have transverse intersections}, with $VT^*M$ the vertical bundle of $T^*M$. We will call $I$ the immersed submanifold given by the projection of the intersection on $T^*M$.
		\item[3)] \textit{$I$ and the $0$-section $M$ do not intersect.}
	\end{itemize}
\end{Hyp}
Let us elucidate a bit the meaning of those hypothesis. In particular, let us focus on the second hypothesis. Let $x$ be a point of $L$ such that $L$ has at least one ``direction'' that is tangent to the vertical bundle ($Di(T_xL)$ and $V_xT^*M$ intersect along some vector subspace of dimension at least $1$). Since the intersection is always a vector subspace, let us get rid of the superfluous information by taking the projectivization. Here, the projection of $Di(TL)$ in $\mathbb{P}T(T^*M)$ is of dimension $n+(n-1)=2n-1$ and the projection of $VT^*M$ in $\mathbb{P}T(T^*M)$ is of dimension $2n+n-1=3n-1$, meanwhile the total space is of dimension $4n-1$. Therefore, Thom's jet transversality theorem implies that the intersection is generically a $(n-1)$-dimensional manifold. Therefore, 
$I=\{x\in L:T_xL\textit{ has a tangent direction to }VT^*M\}$. We will now detail how to use Thom's jet transversality to get this result. This will lead us to prove the following :

\begin{Lem}
The second hypothesis holds true for a generic submanifold of dimension $n$ of $T^*M$
\end{Lem}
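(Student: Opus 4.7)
The plan is to reformulate Hypothesis 2 as a transversality condition between two fixed submanifolds of $\mathbb{P}T(T^*M)$, the projectivization of the tangent bundle of $T^*M$, and then obtain genericity by parametric transversality (the strategy announced by the author as ``Thom jet transversality'').

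First I would set up the geometric picture. Write $V := VT^*M$ for the vertical rank-$n$ subbundle of $T(T^*M)$, so that $\mathbb{P}V \subset \mathbb{P}T(T^*M)$ is a closed submanifold of codimension $n$. For any immersion $i: L \to T^*M$, the pointwise projectivized differential defines a smooth embedding
\[
\bar{\imath}: \mathbb{P}TL \longrightarrow \mathbb{P}T(T^*M), \qquad (x, [v]) \longmapsto (i(x), [Di_x v]).
\]
Hypothesis 2 is exactly the statement that $\bar{\imath}$ is transverse to $\mathbb{P}V$. With $\dim \mathbb{P}TL = 2n-1$, $\dim \mathbb{P}V = 3n-1$, and $\dim \mathbb{P}T(T^*M) = 4n-1$, transverse intersection gives an $(n-1)$-dimensional submanifold of $\mathbb{P}TL$, whose image in $T^*M$ is the immersed submanifold $I$ introduced by the author.

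Next I would argue by parametric transversality. Consider the universal evaluation map
\[
\mathrm{ev}: \mathrm{Emb}(L, T^*M) \times \mathbb{P}TL \longrightarrow \mathbb{P}T(T^*M), \qquad (i, x, [v]) \longmapsto \bar{\imath}(x, [v]).
\]
The key step is to verify that $\mathrm{ev}$ is a submersion. At a fixed $(i_0, x_0, [v_0])$, choose any local chart around $i_0(x_0)$ and any bump function $\chi$ supported in a small neighborhood of $x_0$ with $\chi(x_0) = 1$. Perturbing $i_0$ by $t\, \chi\,\eta$ for various $\eta \in T_{i_0(x_0)}(T^*M)$ already lets us move the value $i(x_0)$ arbitrarily; perturbing by $t\,\varphi(x-x_0)\chi\,\eta$ with $\varphi$ linear and vanishing at $x_0$ lets us move $Di_{x_0}$ independently. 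Together these deformations cover every tangent direction in $\mathbb{P}T(T^*M)$ at $\bar{\imath}_0(x_0,[v_0])$, so $\mathrm{ev}$ is a submersion.

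Since $\mathrm{ev}$ is a submersion, it is transverse to $\mathbb{P}V$; hence $\mathcal{W} := \mathrm{ev}^{-1}(\mathbb{P}V)$ is a (Banach, after passing to $C^k$ embeddings) submanifold of codimension $n$ of $\mathrm{Emb}(L, T^*M) \times \mathbb{P}TL$. The projection $\pi: \mathcal{W} \to \mathrm{Emb}(L, T^*M)$ is Fredholm of index $n-1$ because its fibers are finite-dimensional. By the Sard--Smale theorem its regular values form a residual, and therefore dense, subset. For any such regular value $i$, the fiber $\pi^{-1}(i) = \bar{\imath}^{-1}(\mathbb{P}V)$ is a smooth $(n-1)$-dimensional submanifold of $\mathbb{P}TL$, and the preimage formulation is precisely the statement that $\bar{\imath} \pitchfork \mathbb{P}V$. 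A standard approximation argument (Whitney $C^\infty$ topology versus $C^k$ for each $k$) then yields the residuality in $C^\infty$ as well. This is the conclusion of the lemma, and the only mildly delicate point is the submersion verification; everything else is standard transversality technology.
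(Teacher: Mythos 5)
Your proof is correct, but it takes a genuinely different route from the paper's. The paper works in the $1$-jet space $J^1(L,T^*M)$: it identifies the bad locus $W$ as the set of jets whose differential has a rank-deficient horizontal block, computes its codimension via the stratification of $n\times n$ matrices by rank (the top stratum has codimension $1$, so $(j^1i)^{-1}(W)$ is generically of dimension $n-1$), and then invokes Thom's jet transversality theorem as a black box. You instead pass to $\mathbb{P}T(T^*M)$, recast Hypothesis 2 verbatim as transversality of the embedding $\bar\imath:\mathbb{P}TL\rightarrow\mathbb{P}T(T^*M)$ to the fixed closed submanifold $\mathbb{P}V$, and prove genericity from scratch by checking that the universal evaluation map is a submersion and applying Sard--Smale to the projection of the universal solution set. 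The two routes are cousins (Thom transversality is itself proved by a parametric argument of the kind you give), but yours has the merit of establishing exactly the statement of Hypothesis 2 --- transversality of the two projectivized subbundles --- which the paper obtains only implicitly, asserting the passage from ``$j^1i$ transverse to $W$'' to the projectivized picture without spelling it out. The price is that you carry the Banach-manifold bookkeeping ($C^k$ versus $C^\infty$, Fredholm index, the approximation back to the smooth category) explicitly, whereas the paper's stratified locus $W$ comes with the rank filtration for free and would in addition show that deeper degeneracies (vertical tangencies of dimension $\geq 2$), living in codimension $\geq 4$, are generically absent. Your submersion verification --- moving the value with $t\chi\eta$ and the differential independently with $t\varphi\chi\eta$ for $\varphi$ linear vanishing at $x_0$ --- is the right computation and is the only point where something could have gone wrong; it doesn't.
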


The jet space  $J^1(L,T^*M)$ is, locally, the space $L\times T^*M\times\mathcal{M}(n,2n)$ where $\mathcal{M}(n,2n)$ is the space of matrices of $\mathbb{R}^n$ to $\mathbb{R}^{2n}$. Notice that we can split $\mathbb{R}^{2n}$ as $\mathbb{R}_1^n\oplus\mathbb{R}^n_2$ where $\mathbb{R}_1^n$ corresponds to the horizontal bundle whereas $\mathbb{R}_2^n$ corresponds to the vertical bundle. Now, let us consider $W$, the submanifold of $J^1(L,T^*M)$ which can be locally written as $L\times T^*M\times\mathcal{M}_1(n,2n)$ where $\mathcal{M}_1(n,2n)$ is the subspace of matrices of $\mathcal{M}(n,2n)$ such that the projection of the image on $\mathbb{R}_1^n$ isn't surjective (it's the subspace of matrices $\big(A\;\;B\big)$ with $A,B\in\mathcal{M}(n,n)$ and $rank(A)<n$). However, the space of matrices of $\mathcal{M}(n,n)$ of rank $r$ is a  $rn+(n-r)r$-dimensional submanifold. Therefore, $W$ is a (collection of) submanifold(s) of dimension at most $n-1$. Since $Di$ is of maximal rank (equal to $dim(L)$) and $L$ is a compact set, any small enough perturbation of $i$ will also be of maximal rank. Moreover, if $j^1 i$ intersects $W$ at $j^1i_l$ for some $l\in L$, then there is $X\in T_l L$ such that $Di_l(X)\in V_{i(l)}T^*M$ since, to reiterate, $Di_l$ is of maximal rank. Therefore, the intersection of the two projections in $\mathbb{P}T(T^*M)$ is generically z submanifold of dimension $n-1$ called $\tilde{I}$.

We can now state the following theorem, that will be integral to the proof of theorem \ref{thm3}.
\begin{The}\label{ThmTAF}
	Let $M$ be a connected closed manifold, $\lambda$ be the canonical Liouville form on $T^*M$ and $L$ be a closed submanifold of $T^*M$ satisfying the genericity conditions stated above. Let $V$ be a neighborhood of  $L$.
	
	Then, for every map $h:V\mapsto\mathbb{R}$ such that $h_{|L}$ is not MVT obstructed and such that \[d\ln(h)(Z_\lambda)<1,\] there is a map $g:T^*M\mapsto\mathbb{R}$ equal to $h$ on a neighborhood of $L$ and equal to $1$ outside of a compact, such that $g$ satisfies: 
	\[d\ln(g)(Z_\lambda)<1\]
\end{The}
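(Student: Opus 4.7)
The plan is to reduce the extension to a parametric family of monotone one-dimensional interpolations along the Liouville flow lines, handle the smoothness across the family using the genericity hypotheses, and separately treat the zero section where $Z_\lambda$ vanishes. Fixing an auxiliary Riemannian metric, identify $T^*M\setminus M$ with $ST^*M\times\R_t$ via $(q,p)\mapsto\bigl((q,p/|p|),\,t=\ln|p|\bigr)$, so that the Liouville flow becomes translation in $t$ and the inequality $d\ln(g)(Z_\lambda)<1$ reads $\partial_t\ln g<1$. Setting $\psi=t-\ln g$, the target becomes $\partial_t\psi>0$, and the MVT hypothesis translates precisely to the statement that along each ray $\{s\}\times\R_t$ the values of $\psi=t-\ln h$ at the intersection points with $L$ are strictly increasing in $t$.

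First I would construct, ray by ray, a smooth strictly increasing function of $t$ that agrees with $t-\ln h$ on a neighborhood of $L\cap(\{s\}\times\R_t)$ and equals $t$ for $|t|$ large (corresponding to $g=1$ outside a compact radial range). The solvability of this one-dimensional problem is exactly the content of the MVT inequalities together with the pointwise bound $\partial_t(t-\ln h)>0$ assumed on $V$. Next I would perform this construction smoothly in $s\in ST^*M$: away from $I$ the projection $L\to ST^*M$ is a local diffeomorphism, so the intersection times and values depend smoothly on $s$ and a fixed recipe (e.g.\ convolution of a piecewise-affine guess with a smoothing kernel) yields a smooth family. Near a point of $I$, hypothesis \ref{HypLagHolo}(2) provides a fold-type normal form for the projection $L\to ST^*M$ in which two intersection times merge as $s$ crosses $I$; since by continuity their $\psi$-values merge simultaneously, the required monotone interpolation degenerates smoothly to the trivial one, and an explicit expression in the fold coordinates produces a smooth family of one-dimensional extensions across $I$.

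The zero section is treated in a small neighborhood $U$ of $M$: hypotheses \ref{HypLagHolo}(1) and (3) ensure that $L\cap U$ is a union of finitely many sheets transverse to $M$ and disjoint from $I$, and any Whitney-type smooth extension of $h$ across $M$ satisfies $d\ln(g)(Z_\lambda)<1$ by continuity, since $Z_\lambda$ vanishes on $M$ and the inequality is trivially $0<1$ there. Gluing the parametric 1D extension on $T^*M\setminus U$ to the Whitney extension on $U$, and then to the constant function $1$ outside a large compact, is performed with a partition of unity whose transition regions are chosen to lie in log-radial slabs of large $t$-width, so that the terms $(Z_\lambda\chi_i)(\ln g_i-\ln g_j)$ introduced by the partition can be made arbitrarily small and the strict inequality is preserved. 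The principal obstacle is the smooth parametric extension across the fold locus $I$, where the combinatorial structure of intersections with a Liouville ray changes and the genericity hypothesis \ref{HypLagHolo}(2) is essential.
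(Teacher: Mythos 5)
Your proposal follows essentially the same route as the paper: reduce to a one-dimensional monotone interpolation along the Liouville rays (the paper's log-linear interpolation $\tilde G$ between the values of the extension at the two ends of each Liouville segment is exactly your affine interpolation of $\psi=t-\ln g$), treat the zero section separately using the vanishing of $Z_\lambda$ there, smooth by convolution, and glue with partitions of unity over wide log-radial regions so the error terms stay small. The one caveat is that Hypothesis (2) concerns tangencies of $TL$ with the full vertical bundle rather than with the line field $\R Z_\lambda$, so it does not by itself furnish a fold normal form for the radial projection $L\to ST^*M$; the paper sidesteps this by convolving the piecewise-defined interpolation rather than analyzing that tangency locus, and your construction can do the same.
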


The proof of the theorem \ref{ThmTAF} will take up the next pages and will be followed by a few corollaries. The proof will essentially be in two parts. First, a lemma which will allow us to restrict ourselves to only having to extend $h$ in the neighborhood of a good $n+1$-dimensional ``polyhedron'' in $T^*M$, following the same philosophy as the $h$-principle for open manifolds. Then, we will extend the application $h$ (as describe in the theorem) to a neighborhood of said ``polyhedron''.

\section{Proof of the theorem \ref{ThmTAF}}\label{proof}

\subsection{Restriction to neighborhoods of a good polygon} 

We will first use the classic trick (see, for example, \cite{Eliashberg2002IntroductionTT}) of restricting ourselves to the neighborhoods of a polyhedron. Let us start by defining the core of a manifold.
\begin{Def}
	 Let $V$ be a manifold, $K\in V$ is called a core if $K$ is a polyhedron of positive codimension and, for any neighborhood $U$ of $K$, there is an isotopy $\phi_t:V\rightarrow V$ such that $\phi_{t|K}=id_{|K}$ and $\phi_1(V)\subset U$.
\end{Def} 

\begin{Rem}
Note that this is the definition used when proving the $h$-principle for open manifolds. Here, however, we do not need $K$ to be a polyhedron. We will however keep using this terminology even though, here, $K$ is not required to be a polyhedron
\end{Rem}

Choose some arbitrary Riemannian metric $<\cdot,\cdot>$ on $T^*M$ and define $\mathbb{B}_r^* M$, the ball sub-bundle of radius $r$ of $T^*M$. Assume that $r$ is large enough that $i(L)\subset\mathbb{B}_r^* M$.

\begin{Def}
	Let $K$ be the set:
	\[K:=\{(q,tp)\in T^*M\,:\,(q,p)\in i(L),0\leq t\leq1\}\cup M\]
	
	For each $q\in M$, we will call $K\cap T_q^*M$ the star (above $q$).
	
	For each $(q,p)\in i(L)$, $\{(q,tp)\,:\,0\leq t\leq 1\}$ will be the branch of the star.
\end{Def}
\begin{Lem}
	For $K$ defined as above, $K$ is a core of the manifold $\mathbb{B}_r^* M$.
\end{Lem}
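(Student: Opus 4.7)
The plan is to construct, for any open neighborhood $U$ of $K$ in $\mathbb{B}_r^*M$, a smooth isotopy $(\phi_s)_{s\in[0,1]}$ of $\mathbb{B}_r^*M$ fixing $K$ pointwise and satisfying $\phi_1(\mathbb{B}_r^*M)\subset U$. I would obtain this isotopy as the reparametrized flow of a rescaled Liouville vector field, the rescaling factor being arranged to vanish exactly on $K$.

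First I would observe that $K$ is compact, as the union of the compact $0$-section $M$ with the continuous image of $i(L)\times[0,1]$ under $((q,p),t)\mapsto(q,tp)$, and hence is closed in $\mathbb{B}_r^*M$. Fix an auxiliary Riemannian metric on $T^*M$ whose restriction to each cotangent fiber is flat. Given $U$, pick $\epsilon>0$ with $N_\epsilon(K)\subset U$. By Whitney's theorem (or a straightforward partition-of-unity argument) there exists a smooth function $\chi:\mathbb{B}_r^*M\to[0,1]$ with $\chi^{-1}(0)=K$, and by post-composing with a suitable plateau function I may further arrange $\chi\equiv 1$ outside $N_{\epsilon/2}(K)$. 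I would then define $X=-\chi\,Z_\lambda$, which is smooth, tangent to the fibers, vanishes on $K$, and whose direction ($-Z_\lambda$) is inward-pointing on $\partial\mathbb{B}_r^*M$. Its flow $(\Phi_s)_{s\geq 0}$ is consequently complete on $\mathbb{B}_r^*M$, fixes $K$ pointwise, and (by ODE uniqueness along each fiber ray) consists of smooth diffeomorphisms onto their images.

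The final step is to verify that $\Phi_S(\mathbb{B}_r^*M)\subset N_\epsilon(K)$ for some finite $S$. Outside $N_{\epsilon/2}(K)$ the flow reduces to the explicit Liouville contraction $(q,p)\mapsto(q,e^{-s}p)$, so after time $S=\ln(2r/\epsilon)$ every trajectory that has not yet entered $N_{\epsilon/2}(K)$ satisfies $\|p(S)\|\leq \epsilon/2$ and hence lies within fiber distance $\epsilon/2$ of $(q,0)\in M\subset K$. It remains to show that no trajectory, once inside $N_\epsilon(K)$, can later leave it, which reduces to the claim that along each fiber ray $\alpha\mapsto(q,\alpha p)$ the fiberwise distance to the star $K_q$ is monotone non-decreasing in $\alpha$. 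One verifies this by splitting into the three possible cases for the nearest point of $K_q$ (the origin, the interior of a branch, or a branch tip) and checking monotonicity in each; since $X$ is radially inward, the distance to $K$ can only decrease along $\Phi_s$. Setting $\phi_s=\Phi_{sS}$ then produces the desired isotopy.

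The genericity hypotheses on $i(L)$ play no role at this stage; they will enter in the subsequent extension step. The only delicate point is the fiberwise monotonicity claim, which reduces to elementary Euclidean geometry inside each cotangent fiber once the three cases are enumerated.
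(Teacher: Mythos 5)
Your proposal is correct and follows essentially the same route as the paper: rescale $-Z_\lambda$ by a cutoff function vanishing on (a neighborhood of) $K$, flow for a uniform finite time, and control the trajectories by the fact that radial contraction in each fiber cannot increase the distance to the star $K_q$. Your three-case monotonicity check can be replaced by the one-line observation that $K_q$ is star-shaped about the origin, so $p\mapsto\alpha p$ maps $K_q$ into itself while contracting fiber distances, giving $d(\alpha p,K_q)\le\alpha\, d(p,K_q)$; this is in essence what the paper's fiberwise ``convex envelope'' $C$ accomplishes, and both arguments share the same (harmless here) glossing over of the distinction between ambient and fiberwise neighborhoods of $K$.
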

\begin{proof} Let $U$ be a neighborhood of $K$.
	Take a cover of $\mathbb{B}_r^*M$ by open sets $(\Omega_i)_{i\in I}$ small enough so that $\cup_{\{i\,:\,\Omega_i\cap K\neq\emptyset\}}\Omega_i\subset U$. Take $(\psi_i)_{i\in I}$ a smooth partition of unity subordinate to $(\Omega_i)_{i\in I}$ and define: \[d:(q,p)\in T^*M\mapsto \sum_{\{i\,:\,(q,p)\in\Omega_i\, ,\,\Omega_i\cap K=\emptyset\}}\psi_i(q,p)\in \mathbb{R}\]
	This map is smooth, is equal to $0$ near $K$ and $1$ outside of some arbitrarily small neighborhood of  $K$.
	
	We can now define the vector field: 
	\[\tilde{Z_\lambda}:(q,p)\in T^*M\mapsto -d(q,p)\times (Z_\lambda)_{(q,p)}\in TT^*M\]
	The flow $\phi^t_{\tilde{Z_\lambda}}$ of this vector field yields the desired isotopy. Indeed, let us take $V$ an arbitrarily small closed neighborhood of the $0$-section such that its convex envelope (in $T_q^*M$, for each $q\in M$) with $K$ is a subset of $U$. Let us call this envelope $C$. We can assume that $\cup_{\{i\,:\,\Omega_i\cap K=\emptyset\}}\Omega_i\subset T^*M\backslash C$.
	
	 Notice that each flow line of $-Z_\lambda$ intersects $C$ exactly once. Therefore, $\phi^t_{\tilde{Z_\lambda}}$ takes each point of $\mathbb{B}_r^*M$ to an arbitrarily small neighborhood of $K$ in finite time. Also note that the flow can only get closer to $C$ as $t$ increases. Since $\mathbb{B}_r^*M$ is compact, there is a global maximum for those times. Therefore, up to multiplying $\tilde{Z_\lambda}$ by a positive factor, we can assume that the global maximum for those times is at most $1$.
\end{proof}

However, the pullback of a map $g$ satisfying $d\ln(|g|)(Z_\lambda)<1$ will not, in general, also satisfy the inequality. Therefore, it behooves us to be a bit more careful with our pullbacks.
\begin{Pro}\label{ProNghd}With the same notations as above:
	\begin{itemize}
		\item There is a decreasing basis of neighborhoods, $U_i\supset U_{i+1}$, of $K$ such that for every $q\in M$ and every $(q,p)\in T^*_qM-\{(q,0)\}$, $\{(q,tp)\,:\,t\in\mathbb{R}$ and $\partial (U_{i}\cap T_q^*M)$ are transverse and intersect exactly either twice or zero times.
		\item For each $U_i$, there is a diffeomorphism $\phi_i$ of $\mathbb{B}_r^*M$ in $U_i$ of class $C^1$ such that $\phi_{i|K}=id$ and such that if $d\ln(g)(Z_\lambda)<1$ for some $g: U_i\rightarrow \mathbb{R}$, then $\phi^*d\ln(g)(Z_\lambda)<1$.
	\end{itemize}
\end{Pro}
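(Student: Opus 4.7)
The plan is to construct the $U_i$'s and the diffeomorphisms $\phi_i$ simultaneously as time-$T_i$ maps of a radial contraction of $T^*M$ toward $K$. The ingredient is, for each $i$, a smooth non-negative function $c_i\colon T^*M\to\R_{\geq 0}$ with $c_i=0$ on a neighborhood $V_i\supset K$ (with $\bigcap_i V_i = K$), $c_i>0$ elsewhere, and monotone along rays in the sense that $Z_\lambda c_i\geq 0$. Setting $X_i := -c_i Z_\lambda$, its flow $\psi_t^i$ preserves every cotangent fiber and every ray from the zero section since $X_i$ is fiberwise radial, so concretely $\psi_t^i(q,p) = (q,A_i(t,q,p)\,p)$ with $A_i>0$ solving the ODE $\dot A_i = -c_i(q,A_i p)\,A_i$, $A_i(0)=1$. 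The flow fixes $V_i\supset K$ pointwise and contracts the complement toward $K$.

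Setting $U_i := \psi_{T_i}^i(\mathbb{B}_r^* M)$ for $T_i$ large enough, and choosing $V_i$ to shrink to $K$, the $(U_i)_i$ form the required decreasing basis of neighborhoods of $K$. In each fiber, $\partial(U_i\cap T_q^*M)$ is the image of a round sphere under a ray-preserving diffeomorphism, hence a starshaped hypersurface $\{\rho_i(q,u)u:u\in\Sph^{n-1}\}$; every line through the origin then meets such a hypersurface in exactly two points transversely (transversality is automatic from starshapedness, as the tangent plane already surjects onto $u^\perp$ and cannot contain the radial direction), which is the first bullet.

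The key point for the second bullet is that, for $\phi_i := \psi_{T_i}^i$, a short calculation along the Liouville flow $\sigma_s(q,p)=(q,e^s p)$ gives
\[
(\psi_t^i)_* Z_\lambda\big|_p = \bigl(1 + (Z_\lambda\ln A_i)(t,q,p)\bigr)\cdot Z_\lambda\big|_{\psi_t^i(p)}=:\alpha_i(t,p)\cdot Z_\lambda\big|_{\psi_t^i(p)}.
\]
Differentiating the ODE for $A_i$ along $Z_\lambda$ produces a linear ODE for $\epsilon := Z_\lambda A_i$ with non-positive source $-A_i\cdot Z_\lambda c_i\leq 0$ and zero initial condition, so $\epsilon\leq 0$, hence $Z_\lambda\ln A_i\leq 0$ and $\alpha_i\leq 1$; positivity $\alpha_i>0$ follows from $\phi_i$ being a diffeomorphism (so $\phi_{i*}$ has trivial kernel) together with $\alpha_i(0,\cdot)=1$ and continuity. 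Consequently, for any $g$ on $U_i$ with $d\ln(g)(Z_\lambda)<1$,
\[
\phi_i^*\bigl(d\ln g\bigr)(Z_\lambda)\big|_p = \alpha_i(T_i,p)\cdot d\ln(g)(Z_\lambda)\big|_{\phi_i(p)} < 1,
\]
with the inequality holding whether $d\ln(g)(Z_\lambda)\geq 0$ (rescaled down by $\alpha_i\in(0,1]$) or $d\ln(g)(Z_\lambda)<0$ (the product stays negative, hence below $1$).

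The main obstacle is producing the family $c_i$: the naive candidate $\mathrm{dist}(\cdot,K)^2$ fails ray-monotonicity because $K$ is singular at the zero section (the branches meet there), so distance to $K$ along a generic ray is non-monotonic. I would instead build $c_i$ directly in radial fiber coordinates $(q,\hat p,s)$ as $c_i(q,\hat p,s)=\varphi(s-\tau_i(q,\hat p))$, with $\varphi$ smooth, non-decreasing, vanishing on $\R_{\leq 0}$, and $\tau_i$ a smooth majorant of the branch-length function $r(q,\hat p)$ (equal to $|p_j|$ when $\hat p$ is the direction of a branch endpoint $p_j\in i(L)\cap T_q^*M$, and $0$ elsewhere) with $\tau_i\searrow r$. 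The second genericity hypothesis ensures that the branches depend smoothly on $q$ off the zero section, so a partition-of-unity construction yields such $\tau_i$.
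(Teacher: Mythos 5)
Your argument is correct, and it reaches both bullets by a genuinely different route than the paper. The paper first builds the $U_i$ by hand as unions of metric balls centered on $K$ whose radii shrink along each branch, and then constructs $\phi_i$ fiberwise by an explicit radial reparametrization $\phi_i(tv)=\alpha_v(t)tv$ with $\alpha_v(t)$ modeled on $\left(\frac{r}{r_0+\epsilon}\right)^{-h(t)}$; the inequality $\phi_i^*d\ln(g)(Z_\lambda)<1$ is then checked by showing $0<1-th'(t)\ln\left(\frac{r}{r_0+\epsilon}\right)<1$ for an explicit $h$, followed by two interpolation regions whose derivatives must be estimated separately (which is also why the paper only gets $C^1$ regularity). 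You instead generate $\phi_i$ as the time-$T_i$ map of $-c_iZ_\lambda$ and reduce the whole second bullet to the single identity $(\phi_i)_*Z_\lambda=\alpha_i\,Z_\lambda\circ\phi_i$ with $\alpha_i=1+Z_\lambda\ln A_i\in(0,1]$, obtained from an ODE comparison using $Z_\lambda c_i\geq 0$. Both proofs hinge on the same mechanism --- the radial compression must scale $Z_\lambda$ by a factor in $(0,1]$, i.e.\ it must not expand the logarithmic radial coordinate --- but your flow formulation replaces the explicit formulas and interpolations by a monotonicity statement, yields smooth (not merely $C^1$) maps, and makes the star-shapedness needed for the first bullet automatic. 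You also correctly identify the one real technical cost of your route, namely producing a smooth ray-monotone $c_i$ vanishing exactly on a shrinking neighborhood of $K$; your radial-coordinate construction $c_i=\varphi(s-\tau_i)$ handles the singularity of $K$ at the zero section properly (since $\tau_i>0$ forces $c_i\equiv 0$ near $M$), though you should state the vanishing locus as the closed set $\{s\leq\tau_i\}$ rather than an open $V_i$, and note that a decreasing basis is extracted by passing to a subsequence of the $(V_i,T_i)$. These are cosmetic; the proof stands.
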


The proof of this proposition is split in three. First, we will prove the first point of the proposition, then we will use an analysis/synthesis to prove the second point.
\begin{proof}
	\textit{First part}:
	
	Take $r'>0$ to be the smallest injectivity radius of the exponential map over a point of $K$. For each point $(q,p)\in K$ and each $i>0$, we consider the open set \[V^i_{(q,p)}=\bigg\{x\in T^*M\,:\,dist(x,(q,p))<\frac{D}{\|(q,p)\|+i}r'\bigg\}\]
	where $D$ is the length of the branch on which lays $(q,p)$, and $dist$ is the distance induced by the metric.
	Take $U_i=\cup_{(q,p)\in K}V^i_{(q,p)}$. Those open sets fulfill the conditions.\newline
	
	\textit{Second part, Analysis}:
	
	Let $X$ be an element of $\mathbb{S}^*M$ where $\mathbb{S}^*M$is the sphere sub-bundle of $T^*M$ defined by the metric. Using the proof of the preceding lemma for inspiration, assume that the diffeomorphism of the second point can be written as $\phi(tX)=\alpha_{X}(t)\times tX$ for some positive map $\alpha:\mathbb{S}^*M\times\mathbb{R}_+\rightarrow\mathbb{R}_+$.
	
	Take $(q,p)\in T^*M$ such that $q\neq0$. Define $v=(q,\frac{p}{\|p\|})$, $t'=\|p\|$ and $w=\frac{Z_\lambda}{\|Z\lambda\|} $. Notice that $(q,p)=t'v$ and $Z_\lambda=t'w$. Let $g$ be a map defined on $U_i$ such that $d\ln(g)(Z_\lambda)<1$. To clarify the computations, take $s=\ln(g)$ and $S=d\ln(g)$, then for $t=t'$, we have the following computation:
	\[\phi_i^*S_{tv}=  S_{\phi_i(tv)}(\alpha_v(t)\partial_w\otimes d_w)+S(t\alpha_v'(t)\partial_w\otimes d_w)\]
	However, by hypothesis on $\sigma=(s,S)$, we have that \[1> S_{\phi_i(tv)}\big((\alpha_v(t)\partial_w\otimes d_w)_{tv}(Z_{\lambda |tv})\big)\] with $(\alpha_v(t)\partial_w\otimes d_w)(Z_{\lambda |tv}) \in T^*_{(q,\alpha_v(t)tv)}T^*M$. The parallel transport of $Z_{\lambda|tv}$ to $\phi_i(tv)$ yields the following equality $\alpha_v(t) Z_{\lambda|tv}=Z_{\lambda|\alpha_v(t)tv}$.
	
	To simplify computations,assume that $\alpha$ is $C^1$-close to the map $(\frac{r}{r_0+\epsilon})^{-h(t)}$ with $r_0=dist(0,\partial U_{i+k}\cap Vect^+(v))$ for some $k$ big enough and $\epsilon$ small enough, such that $(r_0+\epsilon) v\in U_i$. The map $h:\left[r_0,r\right]\rightarrow \left[0,1\right]$ is defined such that $h$ has a positive derivative. We will now focus on the study of $h$ to find out what properties this map must fulfill.\newline
	
	According to our hypothesis on $\alpha$, $S(t\alpha_v'(t)\partial_w\otimes d_w)$ is $C^0$-close to  \[S\bigg(-t\ln\bigg(\frac{r}{r_0+\epsilon}\bigg)h'(t)\alpha_v(t)\partial_w\otimes d_w\bigg).\]
	This yields:
	\[\phi_i^*S_{tv}(Z_\lambda)=S_{\phi_i(tv)}\left[(\alpha_v(t)\partial_w\otimes d_w+t\alpha_v'(t)\partial_w\otimes d_w)_{tv}(Z_\lambda)\right]\] \[\approx\left[1-th'(t) ln(\frac{r}{r_0+\epsilon})\right]\times S_{\phi_i(tv)}\left[(\alpha_v(t)\partial_w\otimes d_w)_{tv}(Z_\lambda)\right]\]\[ <\left[1-th'(t) ln(\frac{r}{r_0+\epsilon})\right]\]
	Therefore, $h$ must satisfy $1>1-th'(t)ln(\frac{r}{r_0+\epsilon})>0$, which implies that $0<h'(t)<1/(tln(\frac{r}{r_0+\epsilon}))$.\newline
	
	\textit{Synthesis:}
	
	The following choice of $h$ satisfies the conditions laid out in the analysis: 
	\[h(t)=\frac{\ln\left(\frac{t}{r_0}\right)+\left( \frac{t-r_0}{r-r_0}\right)\ln\left(\frac{r_0}{r_0+\epsilon}\right)}{\ln\left(\frac{r}{r_0+\epsilon}\right)}\]
	
	Indeed, for some $t'=t\in \left[r_0,r\right]$, we have $ln(1+\epsilon/r_0)<(r-r_0)/r$ for some $\epsilon$ small enough.  With this choice, \[\phi_i(tv)=r_0 (\frac{r_0+\epsilon}{r_0})^\frac{t-r_0}{r-r_0}v\] and \[\partial_t\phi_i(tv)=\frac{r_0}{r-r_0}\ln\left(\frac{r_0+\epsilon}{r_0}\right)\left(\frac{r_0+\epsilon}{r_0}\right)^\frac{t-r_0}{r-r_0}v\]
	
	For $0\leq t<r_0-\epsilon$, let $\phi_i(tv)=tv$.\newline
	
	For $r_0-\epsilon\leq t<r_0$, we will take $\phi_i$ an interpolation such that $\phi_i((r_0-\epsilon)v)=(r_0-\epsilon)v$ and $\partial_t\phi_i((r_0-\epsilon)v)=(r_0-\epsilon)v$, with $\phi_i(r_0v)=r_0v$ and $\partial_t\phi_i(r_0v)=\frac{r_0}{r-r_0}\ln\left(\frac{r_0+\epsilon}{r_0}\right)v$. We can also pick $\phi_i$ such that its directional derivative in the direction $\partial_t$ is at most $1+\epsilon'$ with $\epsilon'$ arbitrarily near  $0$.
	
	For example, we can take $\phi_i(tv)=\alpha_v(t)v$ with:
	\begin{align*}
		\alpha_v(t)=H\left(\frac{t-r_0+\epsilon}{\epsilon}\right) & \left(r_0 +(t-r_0)\frac{r_0^2}{r-r_0}\ln\left(\frac{r_0+\epsilon}{r_0}\right)\right) \\
		& +\left(1-H\left(\frac{t-r_0+\epsilon}{\epsilon}\right)\right)t
	\end{align*}
	where $H$ is a primitive of $t\in(0,1)\mapsto b e^{-\frac{a}{t}+\frac{a}{t-1}}$, extended at $0$ and $1$ by continuity and chosen such that $H(0)=0$, with $a\in\mathbb{R}_+^*$ and $b$ chosen such that $H(1)=1$.
	
The derivative (along $t$) of the map $\alpha_v$ is:
	\begin{align*}
		H\left(\frac{t-r_0+\epsilon}{\epsilon}\right) & \frac{r_0^2}{r-r_0}\ln\left(\frac{r_0+\epsilon}{r_0}\right)+ \left(1-H\left(\frac{t-r_0+\epsilon}{\epsilon}\right)\right)\\
		& +(\partial_t H)\left(\frac{t-r_0+\epsilon}{\epsilon}\right)\frac{t-r_0}{\epsilon}\left(\frac{r_0^2}{r-r_0}\ln\left(\frac{r_0+\epsilon}{r_0}\right)-1\right)
	\end{align*}
	The first line of this equation fulfills the conditions for the derivative of $\phi_i$.
	
The second line of this sum is equal to \[b e^{-\frac{a\epsilon}{t-r_0+\epsilon}+\frac{a\epsilon}{t-r_0}}\frac{r_0-t}{\epsilon}\left(1-\frac{r_0^2}{r-r_0}\ln\left(\frac{r_0+\epsilon}{r_0}\right)\right).\] This map reaches its maximum at \[t=r_0-\epsilon+\frac{a\epsilon^3}{2a\epsilon^2+1}.\] The map is therefore bounded from above by \[be^{-\frac{a}{a\epsilon^2+1}-\frac{1}{\epsilon^2}}\times\epsilon\left(\frac{1-a\epsilon^2}{1+2a\epsilon^2} \right)\times\left(1-\frac{r_0^2}{r-r_0}\ln\left(\frac{r_0+\epsilon}{r_0}\right)\right),\] which goes to $0$ when $\epsilon\rightarrow 0$. Therefore, for any $\epsilon$ small enough, the interpolation fulfills the desired conditions $\epsilon$.\newline

	Finally, $r_0,\epsilon:\mathbb{S}^*M\rightarrow\mathbb{R}_+^*$ are continuous maps, that we can replace with smooth approximations without changing the proof.\newline
	
	This gives a $\phi_i$ of class $C^1$ which fulfills the second point of the proposition.
\end{proof}$ $

\subsection{A series of extensions} 
We will finalize the proof of theorem \ref{ThmTAF}. In the previous section, we have seen that we can restrict the problem to a neighborhood of $K$. Let us then show that we can define a map $g$ on a neighborhood of $K$ such that $d \ln(g)(Z_\lambda) < 1$.\newline

\paragraph{\textbf{Assertion 1:}} Any map $h$ defined as in the theorem \ref{ThmTAF} can be extended to a neighborhood of the $0$ section such that the extension is not obstructed by the MVT.\newline

Indeed, away from the intersection points between $L$ and $M$, it is sufficient to take the extension of $h$ to be equal to $\max(h)$. This definition can be extended to a neighborhood of $M$, away from the intersections with $L$. This map can be extended to a constant map on a neighborhood $M$, away from the intersections with $L$. Around the intersection points, one can always take a $C^1$ interpolation between $\max(h)$ (defined on the neighborhood of $M$ away from $L$) and $h$ (defined on a neighborhood of $L$). Call the extension $H$ (defined on a neighborhood of $L\cup M$).

Finally, up to taking a small perturbation away from $L$, $H$ can be smooth.\newline

\paragraph{\textbf{Assertion 2:}} the extension $H$ of $h$ of the previous assertion can be extended to a map $G$ defined in a neighborhood of $K$ such that: \[dln(G)(Z_\lambda)<1.\]
$ $

Take some Riemannian metric on $T^*M$. For any point $q\in M$, consider $T^*_qM\cup U$ where $U$ is the union of a tubular neighborhood of $M$ with smooth boundary and the union of a tubular neighborhood of $L$ with smooth boundary. Then the orbits of $Z_\lambda$ from $\partial U$ to $\partial U$ form a collection of segments. For each such segment, call $l$ (resp. $l'$) the maximum distance (resp. minimum) of a point on this segment to the $0$-section. Finally, note that the union of $U$ and those segments form a neighborhood of $K$.

For each $q\in M$, we will restrict ourselves to arbitrarily small neighborhoods $V_q\subset M$ of $q$ such that $T^*V_q$ has a system of local coordinates defining an isometry with some open subset of $\R^{2n}$. On each aforementioned neighborhood, take the interpolation \[\tilde{G}:t\in[l',l]\mapsto  \exp \bigg(\frac{\ln(l/t)}{\ln(l/l')}\ln(H(q,l'p))+\frac{\ln(t/l')}{ln(l/l')}\ln(H(q,lp)\bigg) \] with $\|p\|=1$ for $(q,l'p))$ and $(q,lp)$ in the domain of definition of $H$. Gluing this interpolation to $H$ along $\partial U$, we get a map that is piece-wise $C^1$. This new map will also be called $\tilde{G}$. Let $\chi_n$ be a positive approximation of unity with arbitrarily small compact support. Then $\chi_n*\tilde{G}$ is smooth and, around each point $(q',p')\in T^*V_q$ there is an arbitrarily small neighborhood $Neig$ 
\[\|Z_\lambda\|\partial_wln(\chi_n*\tilde{G})\leq \frac{\sup_{Neig}(d\tilde{G}(Z_\lambda))}{\inf_{Neig}(\tilde{G})}<1\]
whenever the support is small enough (recall that $w=\frac{Z_\lambda}{\|Z_\lambda\|}$). Indeed, \[\sup_{Neig}(d\tilde{G}(Z_\lambda))\leq \max\left(\sup_{Neig}\left(\frac{\ln(H(q,lp))-\ln(H(q,l'p))}{\ln(l/l')}\right)\times \sup_{Neig}(\tilde{G}),\sup_{Neig}(dH(Z_\lambda))\right).\] 

Remark that $\chi_n*\tilde{G}_{|L\cup M}\neq h_{|L\cup M}$. We will then do a new interpolation to a neighborhood of $M\cup L$. To do this, it is enough to take an interpolation between the Taylor extensions of $H$ and $\chi_n*\tilde{G}$. Since those two maps are $C^1$-close, the interpolation is also going to be $C^1$-close to those maps.

Now, recall that we are in the cotangent of some neighborhood $V_q\subset M$ of some $q\in M$. Since $M$ is assumed to be closed, we can take a finite sub-cover of $(V_q)_{q\in M}$ and a partition of unity subordinate to this cover. This partition can be extended along the cover $(T^*V_q)_{q\in M}$ of $T^*M$ such that the extensions of the partitions are constant along the fibers (of $T^*V_q$). Using this partition of unity and the various (locally defined) $\chi_n*\tilde{G}$, we can globally define a map $G$ satisfying the conclusions of the theorem \ref{ThmTAF} in a neighborhood of $K$.\newline

\paragraph{\textbf{Assertion 3:}} The map $G$ can be extended a map $g$ on $T^*M$ in such a way that $g$ is constant equal to $1$ outside of a compact and \[d\ln(g)(Z_\lambda)<1.\]$ $

It is indeed enough to combine the previous assertion with the previous proposition. This gives an extension of $G$ to an arbitrarily big ball bundle (say of radius $r$). Taking $G'$ the constant map equal to $1$ defined outside of a ball bundle of radius $r'$ sufficiently large (compared to $r$), then any interpolation with small enough slope will do.

It is indeed enough to combine the second assertion and the proposition \ref{ProNghd}. This defines an extension $G'$ on some arbitrarily big ball bundle $\mathbb{B}_r^*M$. One can then take some  $R$ that is arbitrarily large when compared to $r$, and take a smooth interpolation between our map (defined on $\mathbb{B}_r^*M$) and the constant map equal to $\inf(G)$ on $T^*M\backslash\mathbb{B}_R^*M$. One can then repeat the process on a bigger ball bundle to interpolate our map and the constant map equal to $1$. This final map will verify the conclusion of the theorem  \ref{ThmTAF}.

\section{On the projection of exact Lagrangians}\label{projection}
\subsection{Some corollaries and proof of theorem \ref{thm3}}
The previous theorem has a couple of corollaries. The theorem \ref{thm3} will appear as one of them.

\begin{Cor}\label{CorTAF1}
	Let $M$ and $L$ be connected closed manifolds, $\lambda$ be the canonical Liouville form on $T^*M$ and $i:L\rightarrow T^*M$ be an exact Lagrangian submanifold, satisfying the conditions of the theorem \ref{ThmTAF}, such that $i^*\lambda=df+fi^*\beta$  for some $f\in C^\infty(L)$.  Let $V$ be a neighborhood of $i(L)$.
	
	Assume that there is a map $h\in C^\infty(V,\mathbb{R}_+^*)$, satisfying the conditions of \ref{ThmTAF}, and such that: \[\frac{df}{i^*h}+\frac{f}{i^*h}i^*\beta=dH-\eta\] for $H$ a smooth function on $L$ and $\eta\in\Omega^1(L)$ closed.
	
	Then there is an isotopy $\phi:T^*M\times[0,1]\rightarrow T^*M$ such that $\phi_0=id$, $\phi_1(i(L))$ is a Lagrangian of $(T^*M,\lambda)$ and for each $q\in M$ and each $t\in[0,1]$, $\phi_t(T^*_qM)=T^*_qM$.
\end{Cor}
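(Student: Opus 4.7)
The plan is to globalize $h$ using Theorem \ref{ThmTAF} and then perform a fiber-preserving Moser isotopy between the rescaled symplectic form $d(\lambda/g)$ and the standard one $d\lambda$. First, I would invoke Theorem \ref{ThmTAF} to produce a smooth $g:T^*M\to\mathbb{R}_+^*$ that coincides with $h$ in a neighborhood of $i(L)$, is identically $1$ outside a compact set, and satisfies $d\ln(g)(Z_\lambda)<1$ everywhere. By the analysis at the start of Section \ref{extension}, this inequality is precisely what makes $d(\lambda/g)$ non-degenerate, so $(T^*M,d(\lambda/g))$ is a genuine symplectic manifold. Since $i^*g=i^*h$, the hypothesis of the corollary gives
\[
i^*(\lambda/g)=\frac{df}{i^*h}+\frac{f}{i^*h}\,i^*\beta=dH-\eta,
\]
and because $\eta$ is closed, $d(i^*(\lambda/g))=0$. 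Hence $i(L)$ is already a Lagrangian submanifold of $(T^*M,d(\lambda/g))$, and it only remains to carry it over to a Lagrangian of $(T^*M,d\lambda)$ by a fiber-preserving ambient isotopy.

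For this I would set $g_t:=g^t$, $\alpha_t:=\lambda/g_t$, and $\omega_t:=d\alpha_t$ for $t\in[0,1]$, so that $\omega_0=d\lambda$ and $\omega_1=d(\lambda/g)$. The relation $d\ln(g_t)(Z_\lambda)=t\,d\ln(g)(Z_\lambda)<1$ shows that every $\omega_t$ is symplectic, so Moser's trick applies. Computing $\partial_t\alpha_t=-\ln(g)\,\lambda/g_t$, the Moser vector field $X_t$ is defined by
\[
\iota_{X_t}\omega_t=-\partial_t\alpha_t=\ln(g)\,\frac{\lambda}{g_t};
\]
it vanishes on the region where $g=1$, so its flow $\psi_t$ is globally defined and satisfies $\psi_t^*\omega_t=\omega_0$.

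The step I expect to require genuine attention is verifying that $X_t$ is vertical, since the statement insists on $\phi_t(T_q^*M)=T_q^*M$ for every $q$. In canonical local coordinates both $\lambda$ and $d\lambda$ vanish on pairs of vertical vectors, and hence so does $\omega_t=g_t^{-1}d\lambda-g_t^{-2}\,dg_t\wedge\lambda$; therefore the vertical subbundle $VT^*M$ is Lagrangian for every $\omega_t$. Since $\iota_{X_t}\omega_t$ is a scalar multiple of $\lambda$, it annihilates every vertical vector, which forces $X_t$ to lie in the $\omega_t$-orthogonal of $VT^*M$, i.e.\ in $VT^*M$ itself. Setting $\phi_t:=\psi_t^{-1}$ then yields an isotopy with $\phi_0=\mathrm{id}$ and $\phi_t(T_q^*M)=T_q^*M$ for every $q$ and $t$; moreover $(\phi_1\circ i)^*d\lambda=i^*(\psi_1^{-1})^*d\lambda=i^*d(\lambda/g)=0$, so $\phi_1(i(L))$ is Lagrangian for $(T^*M,\lambda)$. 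Beyond that verticality check, every other ingredient is either Theorem \ref{ThmTAF} itself or a textbook Moser argument.
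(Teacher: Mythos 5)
Your proposal is correct and follows essentially the same route as the paper: globalize $h$ to $g$ via Theorem \ref{ThmTAF}, observe that $i(L)$ is Lagrangian for $d(\lambda/g)$, and then run a fiber-preserving Moser isotopy back to $d\lambda$. The only (harmless) differences are that you interpolate geometrically via $g^t$ where the paper uses the linear family $t/g+(1-t)$, and that you establish verticality of the Moser field abstractly from $\iota_{X_t}\omega_t\in\R\lambda$ and the fact that $VT^*M$ is $\omega_t$-Lagrangian, whereas the paper solves explicitly for $X_t$ as a multiple of $Z_\lambda$.
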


\begin{proof} 
	Note that if $g_0\lambda$ is a Liouville form, then $\lambda_t=(tg_0+1-t)\lambda$ is a Liouville form for every $t\in[0,1]$. Indeed, as stated earlier $\lambda_t$ is a Liouville form if and only if $-D\ln(tg_0+1-t)(Z_\lambda)<1$, that is to say that \[\frac{tDg_0(Z_\lambda)}{tg+1-t}>-1.\] Notice that \[\partial_t \frac{tDg_0(Z_\lambda)}{tg_0+1-t}=\frac{Dg_0(Z_\lambda)}{(tg_0+1-t)^2},\] the sign of which does not depend on $t$. Therefore, \[\left(\frac{tDg_0(Z_\lambda)}{tg_0+1-t}\right)_{|x}\geq\min(D\ln(g_0)(Z_\lambda),0)\geq -1\] since $g_0\lambda$ is a Liouville form (and as such $D\ln(g_0)(Z_\lambda)\geq -1$).

This allows us to prove the following lemma:
\begin{Lem}\label{Mosertrick}
	Let $\omega$ be the canonical symplectic form on $T^*M$. If $\omega'=d\frac{\lambda}{g}$ is a symplectic form, for some map $g$ equal to $1$ outside of a compact, then there is an isotopy $\phi:(T^*M,\omega')\times[0,1]\rightarrow (T^*M,\omega)$ such that $\phi_0=id$ and $\phi_1$ is a Liouville diffeomorphism, and such that for each $q\in M$, the restriction of $\phi_t$ on $T_q^*M$ is a diffeomorphism of $T^*_qM$ into itself.
\end{Lem}
\begin{proof}
	Let $g_t$ be a time-dependent map from $T^*M$ to $\mathbb{R}$. Assume that for each $t$, $d g_t\lambda$ is symplectic, Then we can use Moser's trick for $dg_t\lambda=:\omega_t$.
	
	Let us find a time-dependent vector field $X_t$ such that $\partial_t\omega_t+\mathcal{L}_{X_t}\omega_t=0$. Any vector field satisfying that equality must also satisfy:
	\begin{align*}
		0 & = \partial_t\omega_t+\mathcal{L}_{X_t}\omega_t=\partial_tdg_t\lambda+d\omega_t(X_t,\cdot)=d(\partial_tg_t\lambda+\omega_t(X_t,\cdot))
	\end{align*}
	Therefore, it is enough for $X_t$ to satisfy $\omega_t(X_t,\cdot)=\partial_t g_t\lambda$. This equation always has a solution since $\omega_t$ is a family of non-degenerate $2$-form.
	
Note that $g_t\lambda$ is indeed the primitive of a symplectic form as soon as $D\ln (1/g_t)(Z_\lambda)<1$, which is to say $D\ln (g_t)(Z_\lambda)>-1$. Let $Y_t$ be a solution of $\omega_t(Y_t,\cdot)=\lambda$. Notice that this is equivalent to \[g_t\omega(Y_t,\cdot)=\lambda-\iota_{Y_t}dg_t\wedge\lambda.\] Assuming that $Y_t$ is colinear to $Z_\lambda$ (i.e. $Y_t=\alpha_t Z_\lambda$ for some map $\alpha_t$), we have:
	\begin{align*}
		\alpha_tg_t\lambda & =\lambda-\alpha_tdg_t(Z_\lambda)\lambda\\
		\implies \alpha_t&=\frac{1}{g_t+Dg_t(Z_\lambda)}
	\end{align*}
Notice that we always have $Dg_t(Z_\lambda)>-g_t$, and therefore $\alpha_t$ is always well-defined.
This implies that \[X_t=\alpha_t\partial_tg_t Z_\lambda=\frac{\partial_tg_t}{g_t+Dg_t(Z_\lambda)}Z_\lambda.\]

If $g_t$ is equal to $1$ outside of a compact, then the flow of $X_t$ is well-defined at each time.

We can therefore take $g_0=\frac{1}{g}$ and  $g_t=tg_0+(1-t)$.
\end{proof}
	
This lemma shows that there is an isotopy $\phi$ preserving the fibers of $T^*M$ and such that $\phi_1^*\lambda=\frac{\lambda}{h}$. Therefore \[d((\phi_1\circ i)^*\lambda)=d(dH+d\tilde{H}-\eta)=0,\] for a smooth map $\tilde{H}$ on $L$. 
\end{proof}

As a consequence of the previous corollary, we have:
\begin{Cor}\label{CorTAF2}
	Under the same conditions as corollary \ref{CorTAF1}, assume that  $\eta= i^*\tilde{\eta}$ for some closed $\tilde{\eta}\in\Omega^1(T^*M)$. Let $\pi:T^*M\rightarrow M$ be the projection.
	
	Then $\pi$ induces a simple homotopy equivalence between $i(L)$ and $M$. 
\end{Cor}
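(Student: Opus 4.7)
The plan is to reduce to the standard symplectic setting via Corollary \ref{CorTAF1}, then use a fiberwise translation by a representative of $[\tilde\eta]$ to upgrade the resulting Lagrangian into a $0$-exact one, and finally apply Abouzaid--Kragh's theorem.

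First I would invoke Corollary \ref{CorTAF1} to obtain a fiber-preserving isotopy $\phi:T^*M\times[0,1]\to T^*M$ such that $\phi_1(i(L))$ is a Lagrangian of $(T^*M,d\lambda)$. Following the computation in the proof of that corollary, $(\phi_1\circ i)^*\lambda=i^*(\lambda/h)=\tfrac{i^*\lambda}{i^*h}=dH-\eta$, so the cohomology class of this primitive is $-[\eta]\in H^1(L,\R)$ and the sole remaining obstruction to $\phi_1\circ i(L)$ being a $0$-exact Lagrangian is the class $[\eta]$.

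Next I would use the hypothesis $\eta=i^*\tilde\eta$, with $\tilde\eta\in\Omega^1(T^*M)$ closed, together with the fact that $\pi:T^*M\to M$ is a homotopy equivalence admitting the zero section $s$ as a homotopy inverse. Setting $\alpha:=s^*\tilde\eta\in\Omega^1(M)$, which is closed, the standard chain homotopy of the deformation retraction $(q,p)\mapsto(q,(1-t)p)$ produces a smooth $\rho\in C^\infty(T^*M)$ with $\tilde\eta=\pi^*\alpha+d\rho$. I would then introduce the fiber-preserving translation $T_\alpha:(q,p)\mapsto(q,p+\alpha_q)$; a short local-coordinate computation yields $T_\alpha^*\lambda=\lambda+\pi^*\alpha$, so that $T_\alpha$ is a symplectomorphism of $(T^*M,d\lambda)$. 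Since $\pi\circ\phi_1=\pi$, composing gives
\[(T_\alpha\circ\phi_1\circ i)^*\lambda=(\phi_1\circ i)^*\lambda+(\pi\circ i)^*\alpha=dH-\eta+\eta-d(\rho\circ i)=d(H-\rho\circ i),\]
so $T_\alpha\circ\phi_1\circ i(L)$ is a closed $0$-exact Lagrangian of $(T^*M,d\lambda)$.

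Finally I would apply Abouzaid--Kragh's theorem (\cite{AbouzaidKragh2018SHENL}) to conclude that $\pi$ restricts to a simple homotopy equivalence from $T_\alpha\circ\phi_1\circ i(L)$ onto $M$. Since both $\phi_1$ and $T_\alpha$ preserve the fibers of $\pi$, one has $\pi\circ T_\alpha\circ\phi_1\circ i=\pi\circ i$, so the same simple homotopy equivalence is realized by $\pi_{|i(L)}$. The main point of care in the plan is the book-keeping of primitives across the successive modifications, in particular verifying that the exact correction coming from $d\rho$ cancels exactly with the term $-\eta$ produced by Corollary \ref{CorTAF1}; once this is in place, the rest is a direct appeal to Abouzaid--Kragh's theorem.
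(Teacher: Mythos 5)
Your proposal is correct and follows essentially the same route as the paper: reduce to a Lagrangian of $(T^*M,d\lambda)$ via Corollary \ref{CorTAF1}, then compose with a fiberwise translation by a closed $1$-form on $M$ representing the class of $\eta$ to obtain a $0$-exact Lagrangian, and conclude with Abouzaid--Kragh. The paper's proof is only a two-line sketch, and your version supplies exactly the book-keeping it leaves implicit (the identity $T_\alpha^*\lambda=\lambda+\pi^*\alpha$, the choice $\alpha=s^*\tilde\eta$, and the cancellation of $-\eta$ against $i^*\pi^*\alpha$ up to an exact term).
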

\begin{proof}
	Observe that for any $\eta'\in\Omega^1(M)$, we have a fiber-preserving isotopy of $T^*M$ given by $((q,p),t)\mapsto (q,p+t\eta')$. for $t=1$, the corresponding diffeomorphism will be called the translation by $\eta'$.
	
	Taking $\eta'$ in the same homology class as $\eta$, the composition of the isotopy given in the proof of the corollary \ref{CorTAF1} with the isotopy given above gives a fiber-preserving isotopy of $T^*M$ sending $L$ on a $0$-exact Lagrangian. We then combine this observation with Abouzaid-Kragh's theorem to get the result
\end{proof}

This last corollary is almost enough to prove the theorem \ref{thm3}. Indeed, it is sufficient to show that under the conditions of the theorem, there is an extension $F$ of $f$ to a neighborhood of $i(L)$ such that $F$ fulfills the conditions of the theorem \ref{ThmTAF}

\begin{Lem}
 Under the same conditions as \ref{thm3}, assume that $f$ is positive then there is an extension $h$ of $f$ to a neighborhood of $i(L)$ such that $h$ satisfies the conditions of the theorem \ref{ThmTAF}.
\end{Lem}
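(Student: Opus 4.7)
The plan is to prescribe the $1$-jet of $h$ along $i(L)$, verify that the condition $dh(Z_\lambda)<h$ is pointwise achievable there, globalize via a partition of unity, and extend the strict inequality to a neighborhood by continuity and compactness of $L$.

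The key pointwise observation is that at every $x\in i(L)$ with $Z_\lambda(x)\in T_xi(L)$ (that is, $x\in I$), one has $d\ln(f)(Z_\lambda)=0$. Indeed, evaluating the $\beta$-exactness identity $i^*\lambda=df-f\cdot i^*\beta$ on the tangent vector $Z_\lambda$ gives $\lambda(Z_\lambda)=df(Z_\lambda)-f\beta(Z_\lambda)$, and both $\lambda(Z_\lambda)$ and $\beta(Z_\lambda)$ vanish identically: the former by the Liouville identity (in coordinates $\lambda=\sum p_j\,dq_j$ and $Z_\lambda=\sum p_j\partial_{p_j}$), and the latter because $\beta$ is the pullback of a $1$-form on $M$ and hence has no $dp$-components. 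Thus $df(Z_\lambda)=0$, so the admissibility condition $\xi(Z_\lambda)<f(x)$ holds automatically at $x\in I$ since $f(x)>0$.

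With this in hand, I choose at each $x\in i(L)$ a linear form $\xi_x\in T_x^*T^*M$ restricting to $df_x$ on $T_xi(L)$ and satisfying $\xi_x(Z_\lambda(x))<f(x)$. At points $x\notin I$ the vector $Z_\lambda(x)$ is transverse to $T_xi(L)$, so $\xi_x(Z_\lambda)$ can be prescribed freely (for example to $0$ by letting $\xi_x$ vanish on a complement of $T_xi(L)$ containing $Z_\lambda(x)$); at $x\in I$ the value $\xi_x(Z_\lambda)=df_x(Z_\lambda)=0$ is forced and admissible by the previous paragraph. A smooth extension is then built by covering $i(L)$ with finitely many open sets $U_i\subset T^*M$, on each of which an admissible local extension $h_i$ of $f$ exists (with $h_i|_{L\cap U_i}=f$ and $dh_i(Z_\lambda)<f$ on $L\cap U_i$), and setting $h=\sum_i\phi_i h_i$ for a partition of unity $\{\phi_i\}$ subordinate to $\{U_i\}$. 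Since each $h_i|_L=f$ and $\sum\phi_i=1$, the derivatives of the $\phi_i$ drop out on $L$, and for every $x\in L$ one has $dh_x(Z_\lambda)=\sum_i\phi_i(x)\,(dh_i)_x(Z_\lambda)<f(x)$, a convex combination of strict inequalities.

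Finally, since $i(L)$ is compact and $d\ln(h)(Z_\lambda)-1$ is continuous and strictly negative on $i(L)$, the inequality $d\ln(h)(Z_\lambda)<1$ persists on an open neighborhood $V$ of $i(L)$. The MVT-non-obstruction for $h|_L=f$ is precisely the standing hypothesis of Theorem \ref{thm3}, so $h$ fulfills both requirements of Theorem \ref{ThmTAF}. The principal technical hurdle is the consistency of the pointwise jet construction across the locus $I$, where $Z_\lambda$ becomes tangent to $i(L)$ and the value $\xi(Z_\lambda)$ is no longer a free parameter; the identity $d\ln(f)(Z_\lambda)=0$ on $I$, coming from $\lambda(Z_\lambda)=\beta(Z_\lambda)=0$, is precisely what sidesteps this obstacle.
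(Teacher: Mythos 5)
Your proposal is correct and follows essentially the same route as the paper: both hinge on the observation that $\lambda(Z_\lambda)=0$ and $\beta(Z_\lambda)=0$ force $df(Z_\lambda)=0$ wherever $Z_\lambda$ is tangent to $i(L)$, so the constraint $dh(Z_\lambda)<h$ holds automatically (and with room to spare, since $f>0$) at the tangency locus and can be prescribed freely elsewhere. The only step you treat more lightly than the paper is the regularity of the prescribed jet as one approaches the tangency locus, where your chosen complement of $T_xi(L)$ containing $Z_\lambda$ degenerates --- the paper handles this by interpolating with the fiberwise-constant extension near that locus, which works for the same reason you identify --- but your partition-of-unity patching together with the strictness and openness of the inequality covers this.
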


\begin{Rem}
	Among other things, we will prove that 	if $(Z_\lambda)_x\in T_xi(L)$, then $d\ln(f)(Z_\lambda)_x=0$.
\end{Rem}

\begin{proof}
By the tubular neighborhood theorem, there is a neighborhood $V_1$ of $L$ in $TL$ which is diffeomorphic to a neighborhood of $i(L)$ in $T^*M$. Call $\phi$ this diffeomorphism. We can therefore pull back $Z_\lambda$ to define a vector field $D\phi^{-1}(Z_\lambda)$ on $V_1$. Since 
\begin{align*}
d_\beta f(Di^{-1}(Z_\lambda))&=df(Di^{-1}(Z_\lambda))-fi^*\beta(Di^{-1}(Z_\lambda))\\ &=df(Di^{-1}(Z_\lambda))=i^*\lambda(Di^{-1}(Z_\lambda))=0,
\end{align*} any extension $h'$ of $f$ to $V_1$ will satisfy \[d\ln(h')(Di^{-1}(Z_\lambda))<1\] sufficiently near $L$, and close to the points where $Z_\lambda\in Di(TL)$.
 
 Let us show that $h'$ can be globally defined on $V$ such that \[d\ln(h')(D\phi^{-1}(Z_\lambda))<1.\] Take $X=D\phi^{-1}(Z_\lambda)$, and call $X_V$ and $X_H$ the vertical and horizontal components of $X$. Take a random metric on $T^*M$ and pull it back to a neighborhood of the $0$-section in $T^*L$. Then for any vector field $V$ of the vertical bundle of $T^*L$, we can define in local coordinates in $T^*L$: \[h'(e^V_{(q,0)})=-df_q(X_{H|(q,0)})\big<\frac{X_V}{\|X_V\|},V\big>_{(q,0)}+f(q)\] with $e$ the exponential map induced by the metric. Note that this map is well-defined when $X_V=0$ since then $df(X_H)=0$. Finally, note that this map satisfies $d\ln(h')(X)_{(q,0)}<1$.
 
 Do be careful that this may not be differentiable at $(q,p)$ when $p\neq 0$ and $Z_\lambda\in Di(T_qL)$, even though the vertical derivative exists and is continuous. Therefore, we can take a simple interpolation between $h$ and the constant map $(q,p)\in T_qL\mapsto f(q)$ in a  neighborhood of $T_qL$ where $Z_\lambda\in Di(T_qL)$, such that the derivative is $C^1$-close to $h'$ along $L$. Let us also name this interpolation $h'$.
 
  Therefore, up to taking a smaller neighborhood $V_1$, \[d\ln(h')(Z_\lambda)\simeq 0<1.\] We then just have to set $h=h'\circ\phi^{-1}$.

\end{proof}

\begin{Rem}
A corollary of this proof is that the length of the essential Liouville chords of a closed exact Lagrangian $L$ (see definition \ref{liouv}) is bounded from below by a non-zero constant. Indeed, we have just shown that, for any $x\in T^*M\cap L$, if $(Z_\lambda)_x$ is tangent to $T_xL$, then Liouville chords are not MTV-obstructed in a neighborhood of $x$.
\end{Rem}

The theorem \ref{thm3} is an immediate consequence of the above lemma and the corollary \ref{CorTAF2}.\newline

Yet another corollary is the following:
\begin{Cor}(with the same notations as in corollary \ref{CorTAF1})
Let $L$ be an exact Lagrangian submanifold fulfilling the conditions of \ref{CorTAF2}. If the projection does not induce a simple homotopy equivalence between $L$ and $M$, then for any Hamiltonian isotopy $\phi_t$, the pullback of $\lambda$ on $\phi_t(L)$ does not allow for a map $h$ satisfying the conditions of \ref{CorTAF1} and \ref{CorTAF2}.
\end{Cor}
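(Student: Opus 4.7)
The plan is to argue by contrapositive. Suppose there exists a Hamiltonian isotopy $(\phi_t)_{t\in[0,1]}$ of $T^*M$ such that, setting $i_1 = \phi_1 \circ i$, the pullback $i_1^*\lambda$ admits a map $h$ satisfying all the hypotheses of Corollaries \ref{CorTAF1} and \ref{CorTAF2}. The goal is to deduce that $\pi \circ i \colon L \to M$ is a simple homotopy equivalence, contradicting the hypothesis of the corollary.

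First, I would feed the embedding $i_1$ and the witnessing map $h$ directly into Corollary \ref{CorTAF2}: by assumption every hypothesis is satisfied, so the conclusion gives that the restricted projection $\pi \circ i_1 \colon L \to M$ (equivalently, $\pi$ restricted to $\phi_1(L)$, read through the diffeomorphism $\phi_1|_{i(L)}$) is a simple homotopy equivalence.

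Next, I would transfer this back to $\pi \circ i$ by using the Hamiltonian isotopy itself as a homotopy of maps $L \to M$. Explicitly, the family $t \mapsto \pi \circ \phi_t \circ i$ is a continuous path of smooth maps from $\pi \circ i$ at $t=0$ to $\pi \circ i_1$ at $t=1$. Since being a simple homotopy equivalence is a homotopy invariant of maps between finite CW complexes (the Whitehead torsion depends only on the homotopy class of the map), the property transfers from $\pi \circ i_1$ to $\pi \circ i$, producing the required contradiction.

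The main (and essentially only) subtle point I anticipate is making sure that the deformed embedding $i_1$ really does fit into the framework of Corollary \ref{CorTAF2}, i.e.\ that $\phi_1(L)$ still satisfies the genericity assumptions of Theorem \ref{ThmTAF} and that $i_1^*\lambda$ decomposes in the form $dH - \eta/i^*h$ with $\eta$ pulled back from $T^*M$. However, this is precisely built into the hypothesis we are assuming for the contradiction: the very existence of a map $h$ ``satisfying the conditions of \ref{CorTAF1} and \ref{CorTAF2}'' packages all these structural requirements. The only genuinely non-trivial ingredient left is then the homotopy invariance of simple homotopy equivalence, which is classical.
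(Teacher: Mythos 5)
Your argument is correct and is exactly what the paper intends: the corollary is stated without proof precisely because it is the contrapositive of Corollary \ref{CorTAF2}, combined with the observation that $t\mapsto\pi\circ\phi_t\circ i$ homotopes $\pi\circ i$ to $\pi\circ\phi_1\circ i$ and that Whitehead torsion (hence being a simple homotopy equivalence) depends only on the homotopy class of the map. You also correctly identify that the genericity and structural hypotheses on $\phi_1(L)$ are packaged into the assumed existence of $h$, so there is nothing further to check.
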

As a final reality check, let us remark the following:
\begin{Rem}
Notice that in the case of $0$-exact Lagrangians, we have some freedom in the choice of the primitive of the Liouville form. Using this, we can always find a primitive that is not MVT-obstructed.

However, for $\beta$-exact Lagrangians (for some non-exact $\beta$), we do have that freedom of choice by corollary \ref{amusant}.
\end{Rem}

\subsection{Liouville chords and Reeb chords}

\begin{Def}[(essential) Liouville chord]\label{liouv}
	For $k\in\{1,2\}$, let $L_k$ be exact Lagrangian submanifolds of $(T^*M,\lambda,\beta)$ that intersect transversely such that $i^*\lambda=d_\beta f_k$ for $i$ the inclusion and $f_k\in C^\infty(L_k,\R_+^*)$. We will assume that $\beta$ is not exact. Call $\Phi$ the flow of the canonical Liouville vector field. For any $x\in T^*M$, we have the following definitions:
	\begin{enumerate}
		\item 	A (positive) Liouville chord from $L_1$ to $L_2$ is a trajectory $\{\Phi_s(x):s\in[0,t]\}$ such that $\Phi_0(x)\in L_1$ and $\Phi_t(x)\in L_2$. We will call $t$ the length of the Liouville chord.
		\item A negative Liouville chord from $L_1$ to $L_2$ is a trajectory $\{\Phi_s(x):s\in[-t,0]\}$ such that $\Phi_0(x)\in L_1$ and $\Phi_{-t}(x)\in L_2$. We will call $-t$ the length of the Liouville chord.
		\item	A (positive) Liouville chord $\{\Phi_s(x):s\in[0,t]\}$ will be called essential if $f_2(\Phi_t(x))-e^tf_1(\Phi_0(x))\geq 0$.
		\item 	A negative Liouville chord $\{\Phi_s(x):s\in[-t,0]\}$ will be called essential if $f_2(\Phi_{-t}(x))-e^{-t}f_1(\Phi_0(x)))\leq 0$.
		\item A Liouville chord of $L_1$ is a trajectory $\{\Phi_s(x):s\in[0,t]\}$ such that $\Phi_0(x),\Phi_{t}(x)\in L_1$ for some $t>0$. We will call $t$ the length of the Liouville chord.
		\item 	A Liouville chord $\{\Phi_s(x):s\in[0,t]\}$ of $L_1$ will be called essential if $f_1(\Phi_t(x))-e^tf_1(\Phi_0(x))\geq 0$.
	\end{enumerate}
\end{Def}
\begin{Rem}
A Liouville chord of $L_1$ is essential if its extremities do not verify the MVT inequality given in theorem \ref{thm3}. Indeed, for $x=(q,p)$, $\Phi_t(x)=(q,e^tp)$ and therefore: \[\frac{\ln(f_1(q,e^tp))-\ln(f_1(q,p))}{\ln(e^t)}\geq 1\]
\end{Rem}
Let us now apply the previous theorems and corollaries to the examples given in section \ref{somexample}. First, notice that that:

\begin{Rem}
	In the construction given in the section \ref{somexample}, we can push a Legendrian at infinity with the Reeb flow. Therefore, the chords along which the MVT needs to be checked can be made to be arbitrarily close to the Reeb chords. Indeed, the Liouville chords are given by the flow of $\sum p_i\partial_{p_i}+s\partial_s\approx s\partial_s$ when $s>>\sum |p_i|$, for any point $(q,p,\theta,s)\in T^*M\times\Sph\times\R=T^*(M\times\Sph)$.
\end{Rem}

Therefore, as a corollary of the previous results, we get back the (classical) result:

\begin{Cor}
	Let $L$ be a connected closed Legendrian of $J^1M$ and $L\times\mathbb{S}^1$ be its lift. If $L$ does not have any Reeb chord then the projection induces a simple homotopy equivalence between $L\times\mathbb{S}^1$ and $M\times\mathbb{S}^1$. In particular, the projection of $J^1M$ on $M$ induces simple homotopy equivalence between $L$ and $M$.
\end{Cor}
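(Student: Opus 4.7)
The plan is to apply Theorem~\ref{thm3} to the lift $L\times\mathbb{S}^1\hookrightarrow T^*(M\times\mathbb{S}^1)$ given by Construction~\ref{const1}, whose $d\theta$-primitive is $f(l)=z(l)$, the $z$-coordinate of $L\subset J^1M$. To arrange the MVT hypothesis, I would first push $L$ arbitrarily far in the Reeb direction, replacing it by $L_c=\Phi^{\mathrm{Reeb}}_c(L)$ for a large constant $c>0$. Reeb translation is a Legendrian isotopy that preserves the no-Reeb-chord condition and translates the primitive to $f+c>0$; on the lift side it is a fibre-preserving ambient translation by $-c$ in the $p_\theta$-direction, so it suffices to prove the claim for the lift of $L_c$ for one sufficiently large~$c$.

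A direct computation shows that two distinct points $(l_1,\theta),(l_2,\theta)$ of the lift of $L_c$ lie on a common Liouville orbit iff $\pi_M(l_1)=\pi_M(l_2)$, $p_M(l_2)=tp_M(l_1)$ for some $t>0$, and $f(l_2)+c=t(f(l_1)+c)$. On such a Liouville chord the MVT ratio appearing in Theorem~\ref{thm3} equals $1$ identically, so the strict inequality forces me to choose $c$ so that no such chords exist. For each pair $(l_1,l_2)\in L\times L$ with $l_1\neq l_2$, common base $q$, and positively collinear cotangent components with ratio $t=|p_M(l_2)|/|p_M(l_1)|$, the chord condition uniquely determines a critical value $c(l_1,l_2)=(f(l_2)-tf(l_1))/(t-1)$. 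The content of the remark preceding the corollary is precisely that $Z_\lambda\approx s\,\partial_s$ when $|s|\gg|p|$, so Liouville chords approach Reeb chords as $c\to\infty$; a sequential compactness argument on $L\times L$---using that $L$ is closed, that $t$ stays bounded away from $1$ (since $t=1$ would force a Reeb chord), and that a first-order Taylor expansion along diagonal-approaching sequences yields a finite limit for $c(l_1,l_2)$---shows this critical set is bounded. Picking $c$ strictly larger, Theorem~\ref{thm3} applies vacuously to the lift of $L_c$, giving the sought simple homotopy equivalence.

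For the final assertion, I would argue independently: the no-Reeb-chord hypothesis means the forgetful map $L\to T^*M$ is an embedding whose image is a $0$-exact Lagrangian with primitive $f$, so Abouzaid--Kragh's theorem \cite{AbouzaidKragh2018SHENL} yields a simple homotopy equivalence between this image and $M$ via the cotangent projection, which composes with the diffeomorphism $L\to\pi_{T^*M}(L)$ to produce the claimed equivalence $L\to M$. The main technical obstacle is controlling $c(l_1,l_2)$ near the diagonal of $L\times L$: the first-order expansion using that $L$ is Legendrian should yield a finite limit generically, but degenerate configurations (e.g.\ front singularities where $dp_M$ has nontrivial kernel) may require a preliminary generic perturbation of $L$ within its Legendrian isotopy class before the compactness argument applies cleanly.
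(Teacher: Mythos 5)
Your proposal follows the same route the paper intends: the paper does not spell out a proof of this corollary, but the remark immediately preceding it (pushing the Legendrian to infinity with the Reeb flow so that Liouville chords of the lift limit onto Reeb chords) is exactly your strategy, and your observation that the MVT ratio is identically $1$ on any Liouville chord of the lift matches the paper's remark that all such chords are essential, so that Theorem \ref{thm3} must be applied vacuously. The one obstacle you flag --- controlling $c(l_1,l_2)$ near the diagonal of $L\times L$ --- does not actually require a generic perturbation: since $L$ is Legendrian, $dz=p\,dq$ on $L$, and for a pair $l_1\neq l_2$ over the \emph{same} base point $q$ a short connecting path in $L$ projects to a small null-homotopic loop in $M$, whence $|z(l_2)-z(l_1)|=\bigl|\int_\gamma (p-p(l_1))\,dq\bigr|=O\bigl(d(l_1,l_2)^2\bigr)$; combined with $t-1=\|p(l_2)-p(l_1)\|/\|p(l_1)\|$ this gives $c(l_1,l_2)\to -z(l)$ along any diagonal-approaching sequence of chord pairs, so the critical set is bounded on all of $L\times L$ minus the diagonal with no genericity needed (this is also the content of the paper's remark that essential Liouville chords have length bounded below). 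Your independent treatment of the final assertion via Abouzaid--Kragh applied to the embedded Lagrangian projection is appropriate, and in fact necessary, since the product formula for Whitehead torsion ($\chi(\mathbb{S}^1)=0$) does not let one deduce simplicity of $L\to M$ from that of $L\times\mathbb{S}^1\to M\times\mathbb{S}^1$.
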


Do note however that if $L$ has Reeb chords, the previous results do not yield additional information, as we have the following remark that is a direct consequence of the construction given in the second section:

\begin{Rem}
All the Liouville chords of $L\times\mathbb{S}^1$ are essential.
\end{Rem}

Notice that the two previous corollaries (and the remark) have some interesting consequence: if the projection does not induce a homotopy equivalence between the Legendrian and the $0$-section, then its exact Lagrangian lift to $T^*(M\times\Sph)$ must always have an essential Liouville chord. Therefore, at least some of the essential Liouville chords must survive displacement by any (lift of) Legendrian isotopy and any Hamiltonian isotopy of $\lcs$ type. Moreover, since Reeb chords (almost) lift to families of Liouville chords, some traces of them must survive after a Hamiltonian isotopy of $\lcs$ type. It therefore seems like essential Liouville chords are a $\lcs$ version of Reeb chords.\newline

Another way of considering essential Liouville chords stem from the following remark:
\begin{Rem}
	Let $\Lambda\times\mathbb{S}^1$ be the lift of a connected closed Legendrian $\Lambda$ as given by the construction \ref{const1}. Then $\Lambda\times\mathbb{S}^1$ has a Liouville chord from $(x,\sigma,s)\in(T^*M\times T^*_\sigma\mathbb{R})$ to $(x',\sigma,s')\in(T^*M\times T^*_\sigma\mathbb{R})$ if and only if   $(x,-s)\in J^1M$ and $(x',-s')\in J^1M$ are on the same orbit of the flow of $p\partial_p+s\partial_s$ in $J^1M$. Moreover, notice that $(x,-s)$ and $(x',-s')$ are $\Lambda$.
\end{Rem}
This leads us to the following lemma:
\begin{Lem}
	Let $\Lambda$ be a connected closed Legendrian of $(J^1M,\alpha)$ where $\alpha$ is the canonical contact form. Assume that $\Lambda$ is in $T^*M\times[\epsilon;+\infty)$ for some $\epsilon>0$, and denote by $s$ the coordinate in $\mathbb{R}$. Then, for any contact form $\alpha'$ that restricts to $\frac{\alpha}{s}$ on $T^*M\times[\epsilon;+\infty)$, $\Lambda$ is a Legendrian of $(J^1M,\alpha')$ and $\Lambda$ has a Reeb chord (for the contact form $\alpha'$) from $(x,s)$ to $(x',s')$ if and only if the exact Lagrangian lift $\Lambda\times\mathbb{S}^1$ given by construction \ref{const1} has an essential Liouville chord from $(x,\theta,-s)$ to $(x',\theta,-s')$, for any $\theta$.
\end{Lem}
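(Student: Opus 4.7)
The plan is to verify the lemma by an explicit identification of the Reeb field of $\alpha'$ with the generator of the Liouville dynamics on $\Lambda\times\Sph^1$, then to match chords directly.

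First, I would compute the Reeb vector field of $\alpha'$ on the region $T^*M\times[\epsilon,+\infty)$, where by hypothesis $\alpha'=\alpha/s$ with $\alpha=ds-\lambda_M$. Writing $\alpha'=\frac{1}{s}ds-\frac{1}{s}\sum p_i\,dq_i$ and differentiating, a short calculation in local coordinates $(q,p,s)$ yields
\[
R' = s\,\partial_s+\sum_i p_i\,\partial_{p_i}.
\]
Since $\alpha'$ and $\alpha$ are positively conformal on this region, their kernels agree, and $\Lambda$ is automatically a Legendrian of $(J^1M,\alpha')$. Furthermore, $R'$ has positive $s$-component on $\{s\geq\epsilon\}$, so any forward Reeb chord of $\alpha'$ starting on $\Lambda\subset T^*M\times[\epsilon,+\infty)$ stays in this region and is governed by the explicit formula for $R'$ throughout.

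Second, I would unwind the construction \ref{const1} in this setting. Since $i^*(ds-\lambda_M)=0$, the primitive $f$ with $i^*\lambda_M=df$ is just the restriction $s_{|\Lambda}$, so the lift reads
\[
j:\Lambda\times\Sph^1\longrightarrow T^*M\times T^*\Sph^1,\qquad (l,\theta)\longmapsto(i_M(l),\theta,-s(l)),
\]
and $j^*\lambda=df-f\,d\theta=d_{d\theta}f_1$ with $f_1=s$. The Liouville vector field on $T^*(M\times\Sph^1)$ is $Z_\lambda=\sum p_i\partial_{p_i}+z\partial_z$, whose time-$t$ flow sends $(q,p,\theta,-s)$ to $(q,e^tp,\theta,-e^ts)$. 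Thus $\Phi_t$ preserves $\theta$ and restricts to the action $(q,p,s)\mapsto (q,e^tp,e^ts)$ on the $J^1M$-factor, which is precisely the time-$t$ flow of $R'$.

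Third, combining these two observations gives the bijection: a Liouville chord of $\Lambda\times\Sph^1$ of length $t>0$ from $(x,\theta,-s)$ to $(x',\theta,-s')$ exists if and only if the $R'$-orbit starting at $(x,s)\in\Lambda$ reaches $(x',s')\in\Lambda$ at time $t$, i.e.\ a Reeb chord of $(J^1M,\alpha')$ from $(x,s)$ to $(x',s')$. Essentiality is then automatic: along such a chord, the primitive computes as
\[
f_1(\Phi_t(y))=s(\Phi_t(y))=e^t s(y)=e^t f_1(y),
\]
so the defining inequality $f_1(\Phi_t(y))-e^tf_1(y)\geq 0$ holds with equality, recovering the earlier remark that \emph{all} Liouville chords produced by construction \ref{const1} are essential.

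No step presents a real conceptual difficulty; the potentially delicate point is the first one, namely confirming that the Reeb dynamics of $\alpha'$ on the relevant region really is generated by $s\partial_s+\sum p_i\partial_{p_i}$ and that chords do not escape into the region where the formula for $\alpha'$ is unspecified. Both issues are controlled by the explicit coordinate computation together with the confinement of $\Lambda$ to $\{s\geq\epsilon\}$ and the positivity of the $s$-component of $R'$ there.
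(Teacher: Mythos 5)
Your proposal is correct and follows essentially the same route as the paper: the core step in both is the coordinate computation showing that $\sum p_i\partial_{p_i}+s\partial_s$ is the Reeb vector field of $\alpha/s$, after which the identification with the Liouville flow on the lift and the automatic essentiality (equality in the defining inequality) are exactly the content of the two remarks preceding the lemma in the paper. Your added observation that the positivity of the $s$-component of the Reeb field confines chords to $\{s\geq\epsilon\}$ is a worthwhile extra detail, but does not change the argument.
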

\begin{proof}
	Given he previous remark, it is sufficient to check that $R:=p\partial_p+s\partial_s$ is a Reeb vector field for $\frac{\alpha}{s}$. We immediately have that $\frac{\alpha}{s}(R)=1$. Moreover, $d\frac{\alpha}{s}=d\frac{ds}{s}-d\frac{pdq}{s}=\frac{pds\wedge dq}{s^2}-\frac{dp\wedge dq}{s}$, implying that $\iota_Rd\frac{\alpha}{s}=0$.
\end{proof}
Therefore, those Lagrangian lifts enable us to study Legendrians of $J^1M$ for a family of contact forms. Indeed, given any contact form $\alpha'$ verifying the conditions of the precious lemma, and a Legendrian $\Lambda$ of $(J^1M,\alpha')$, we can study the persistence of Reeb chords of $\Lambda$ by first finding a Legendrian isotopy $\phi_t$ such that $\phi_t(\Lambda)$ verifies the conditions of the lemma, and then by studying the Liouville chords of the lift.

Finally, note that Hamiltonian isotopies of $\lcs$ type are not, in general, lifts of Legendrian isotopies and vice-versa. This therefore enables us to study Reeb/Liouville chords under the action of a larger class of deformations.

\bibliographystyle{plain}
\bibliography{./biblio.bib}
\end{document}